\crefname{hypothesis}{Hypothesis}{Hypotheses}
\title{Convex relaxations of integral variational problems: pointwise dual relaxation and sum-of-squares optimization\thanks{Submitted to SIAM Journal on Optimization.
\funding{AC, JB, and DG were supported by the Canadian NSERC Discovery Grants Program awards RGPIN-2018-04263, RGPAS-2018-522657 \& DGECR-2018-0037. JB was also supported by a PIMS Postdoctoral Fellowship. GF was supported by an Imperial College Research Fellowship.}}}
\author{Alexander Chernyavsky\footnotemark[5]
\thanks{Department of Mathematics, SUNY Buffalo, USA (\email{chernyav@buffalo.edu}).} 
\and Jason J.\ Bramburger\footnotemark[5] \thanks{Mathematics and Statistics, Concordia University, Canada (\email{jason.bramburger@concordia.ca}).}
\and Giovanni Fantuzzi\thanks{Department of Aeronautics, Imperial College London, UK (\email{giovanni.fantuzzi10@imperial.ac.uk}).}
\and David Goluskin\thanks{Department of Mathematics and Statistics, University of Victoria, Canada (\email{goluskin@uvic.ca}).} 
}
\def\SIOPT{1}
\definecolor{mygray}{rgb}{.6, .6, .6}
\newcommand\solidrule[1][15pt]{\rule[0.5ex]{#1}{1pt}}
\newcommand\dashedrule{\mbox{%
		\solidrule[3pt]\hspace{3pt}\solidrule[3pt]\hspace{3pt}\solidrule[3pt]}}
\newcommand\dottedrule{\mbox{%
		\solidrule[1.5pt]\hspace{2pt}\solidrule[1.5pt]\hspace{2pt}\solidrule[1.5pt]\hspace{2pt}\solidrule[1.5pt]\hspace{2pt}\solidrule[1.5pt]}}
\newcommand\beq{\begin{equation}}
\newcommand\eeq{\end{equation}}
\newcommand\dx{\mathrm{d}x}
\newcommand\R{\mathbb{R}}
\newcommand\cB{\mathcal{B}}
\newcommand\cD{\mathcal{D}}
\newcommand\cF{\mathcal{F}}
\newcommand\cL{\mathcal{L}}
\newcommand\cQ{\mathcal{Q}}
\newcommand\vphi{\varphi}
\newcommand\diver{\mathrm{div}}
\newcommand{\abs}[1]{\left\vert #1 \right\vert}
\newcommand{\dbound}{\mathrm{d}S}
\newcommand{\Lpdr}{\cL_{\rm pdr}}
\newcommand{\HLpdr}{\widehat{\cL}_{\rm pdr}}
\newcommand{\Linf}{\cL_\infty}
\newcommand{\Lnu}{\cL_\nu}
\newcommand{\HLnu}{\widehat{\cL}_\nu}
\newcommand{\LnuOne}{\cL_{\nu+1}}
\newcommand{\Lsos}{\cL_\nu^{\rm sos}}
\newcommand{\LsosWithArg}[1]{\cL_{#1}^{\rm sos}}
\newcommand{\HLsos}{\widehat{\cL}_\nu^{\rm sos}}
\newcommand{\LsosOne}{\cL_{\nu+1}^{\rm sos}}
\newcommand{\Lsosinf}{\cL_{\infty}^{\rm sos}}
\newcommand{\Fomr}{\cF_{\rm omr}}
\newcommand{\Fmom}{\cF_\nu^{\rm mom}}
\newcommand{\Fmominf}{\cF_\infty^{\rm mom}}
\newcommand{\Fnu}{\cF_\nu}
\newcommand{\Finf}{\cF_\infty}
\newcommand{\Bpdr}{\cB_{\rm pdr}}
\newcommand{\HBpdr}{\widehat{\cB}_{\rm pdr}}
\newcommand{\Bsos}{\cB_\nu^{\rm sos}}
\newcommand{\HBsos}{\widehat{\cB}_\nu^{\rm sos}}
\def\cwgeq{\rotatebox[origin=c]{-90}{$\geq$}}
\def\cweq{\rotatebox[origin=c]{-90}{$=$}}
\begin{document}

\maketitle

\begin{abstract}
We present a method for finding lower bounds on the global infima of integral variational problems, wherein $\int_\Omega f(x,u(x),\nabla u(x)){\rm d}x$ is minimized over functions $u\colon\Omega\subset\mathbb{R}^n\to\mathbb{R}^m$ satisfying given equality or inequality constraints. Each constraint may be imposed over $\Omega$ or its boundary, either pointwise or in an integral sense. These global minimizations are generally non-convex and intractable. We formulate a particular convex maximization, here called the pointwise dual relaxation (PDR), whose supremum is a lower bound on the infimum of the original problem. The PDR can be derived by dualizing and relaxing the original problem; its constraints are pointwise equalities or inequalities over finite-dimensional sets, rather than over infinite-dimensional function spaces. When the original minimization can be specified by polynomial functions of $(x,u,\nabla u)$, the PDR can be further relaxed by replacing pointwise inequalities with polynomial sum-of-squares (SOS) conditions. The resulting SOS program is computationally tractable when the dimensions $m,n$ and number of constraints are not too large. The framework presented here generalizes an approach of Valmorbida, Ahmadi, and Papachristodoulou [\href{https://doi.org/10.1109/TAC.2015.2479135}{IEEE Trans.\ Automat.\ Contr., 61:1649--1654 (2016)}].
We prove that the optimal lower bound given by the PDR is sharp for several classes of problems, whose special cases include leading eigenvalues of Sturm--Liouville problems and optimal constants of Poincar\'e inequalities. For these same classes, we prove that SOS relaxations of the PDR converge to the sharp lower bound as polynomial degrees are increased. Convergence of SOS computations in practice is illustrated for several examples.

\end{abstract}

\begin{keywords}
	Calculus of variations, convex relaxation, polynomial optimization, sum-of-squares programming
\end{keywords}

\begin{AMS}
	AMS classification codes here
\end{AMS}


\section{\label{sec:intro}Introduction}

Finding the globally optimal solution to non-convex integral variational problems is often intractable, even by computational methods. One way to study such problems is by convex relaxation---that is, by formulating an easier convex problem whose optimum is a one-sided (and perhaps sharp) bound on the global optimum of the original problem. In the present work, we study and apply a general approach to relaxing integral variational problems. The relaxation can be chosen to give a convex problem of either infinite or finite dimension, and in many cases the finite-dimensional relaxations can be solved computationally using tools of polynomial optimization. A broad class of variational problems to which this approach is applicable includes minimizations of the form
\begin{equation}
\mathcal{F}^*
:= \inf_{\substack{u\in W^{1,p} \\ +\text{ constraints}}}
	\int_\Omega f(x,u,\nabla u)\,\dx,
\label{eq:primal-vp}
\end{equation}
where $\Omega\subset\R^n$ is open and bounded, $W^{1,p}$ is the Sobolev space of functions $u:\Omega\to\R^m$ such that all components of $u$ and their weak derivatives are integrable in the Lebesgue space $L^p$ for some $p\ge1$, and $\nabla u:\Omega\to\R^{m\times n}$ is the gradient tensor defined in the weak sense. The integrand function $f$ can be any such that the integral is well defined for all admissible $u$, and, as specified in the next section, the constraints can include pointwise or integral conditions over $\Omega$ or its boundary $\partial\Omega$. In general the infimum $\cF^*$ might not be attained, meaning no global minimizer $u$ exists.

An \emph{upper} bound on \cref{eq:primal-vp} can be found by evaluating the integral for any particular $u$ satisfying the constraints, and local minimizers for the constrained functional can be computed using gradient descent methods or by solving optimality conditions such as Euler--Lagrange equations. When~\cref{eq:primal-vp} is a non-convex problem, however, any $u$ that is a local minimizer might not be a global minimizer. Even when global minimizers are known to exist, in general finding them is beyond current techniques. Here we pursue \textit{lower} bounds on the global infimum $\cF^*$, which requires a different approach.

\subsection{\label{sec: sketch}Sketch of the method}

The strategy we follow is dual to the measure-theoretic approach of \cite{Korda2018}, and it is reminiscent of the calibration method \cite[\S1.2]{Buttazzo1998} and the translation method \cite{Firoozye1991} in the calculus of variations. It consists of two steps that, as explained shortly, can be seen as a \emph{relaxation}\footnote{By a `relaxation' of an optimization problem, we mean an easier problem that may have the same optimum or may give a one-sided bound on the original optimum. This differs from `relaxations' in variational analysis, which do not change the optimum but ensure existence of optimizers.} of the Lagrangian dual problem to the original minimization \cref{eq:primal-vp}. The first step is to change the functional without changing its value by adding terms that, by the divergence theorem, integrate to zero. The term added to $f(x,u,\nabla u)$ in the volume integral is a total divergence of the form $\diver_x\vphi(x,u(x))$ for some $\vphi:\Omega\times\R^m\to\R^n$. In general an equal boundary integral must be subtracted, as in the next section's formulation, but often this can be avoided by choosing $\vphi$ so that the boundary integral vanishes for all admissible $u$. The second step is to estimate the new integral from below by taking the pointwise infimum of its integrand over all $(u,\nabla u)$ arguments. In the simpler case with no boundary integral and no side constraints on $u$ and $\nabla u$, these two steps give
\beq
\label{eq: relaxation simple}
\int_\Omega f(x,u,\nabla u)\,\dx=\int_\Omega [f+\cD\vphi](x,u,\nabla u)\,\dx\ge \int_\Omega \inf_{\substack{y\in\R^m\\z\in\R^{m\times n}}} [f+\cD\vphi](x,y,z)\,\dx,
\eeq
where $\cD\vphi:\Omega\times\R^m\times\R^{m\times n}\to\R$ satisfies $\cD\vphi(x,u,\nabla u)=\diver_x\vphi(x,u(x))$ by virtue of its definition,
\vspace*{-10pt}
\begin{equation}
\label{eq: D}
\cD\varphi(x,y,z) := \sum_{i=1}^n \frac{\partial\varphi_i}{\partial x_i}
+ \sum_{i=1}^n \sum_{j=1}^m z_{ji}\frac{\partial\varphi_i}{\partial y_j}.
\end{equation}
The $z_{ji}$ terms appearing linearly in $\cD\vphi(x,y,z)$ correspond to $\nabla u$ components that arise when $\diver_x\vphi(x,u(x))$ is expanded by the chain rule. This is the only remnant of the relationship between $u$ and $\nabla u$ on the right-hand side of~\cref{eq: relaxation simple}, since any values $(y,z)\in\R^m\times\R^{m\times n}$ are allowed at each point $x\in\Omega$. The more general formulation in the next section shows how constraints from~\cref{eq:primal-vp} can be incorporated into~\cref{eq: relaxation simple}.

The lower bound in \cref{eq: relaxation simple} might seem overly conservative, but in various cases there exist $\vphi$ for which it is an equality, as we prove here for three classes of variational problems. For a simple example with scalar $u$ on the one-dimensional domain $\Omega=(-\pi/3,\pi/3)$, consider the minimization~\cref{eq:primal-vp} over $u$ that vanish at the boundary points and with integrand $f=u_x^2-u^2-2u$. For the bad choice of $\vphi=0$, the right-hand side of \cref{eq: relaxation simple} is $-\infty$ because $[f+\cD\vphi](x,y,z)=z^2-y^2-2y$ is not bounded below over $(y,z)\in\R\times\R$. However, \cref{ex:simple-variational-problem} below shows that, for the optimal choice of $\vphi(x,u)=2u^2\sin x/(2\cos x - 1)$, the right-hand side of~\cref{eq: relaxation simple} is equal to $2(\tfrac{\pi}{3}-\sqrt{3})$, which is exactly the minimum of the variational problem.

In general, there are infinitely many possible choices of $\vphi$ giving different lower bounds via \cref{eq: relaxation simple}, so it is natural to maximize the right-hand side of~\cref{eq: relaxation simple} over $\vphi$.
By minimizing each expression in \cref{eq: relaxation simple} over $u$ and then maximizing over all functions $\varphi$ in some yet-unspecified set $\Phi$, we obtain a sketch of our approach in the simpler case without constraints or boundary integrals:
\beq
\cF^* = \inf_{u\in W^{1,p}}\int_\Omega f(x,u,\nabla x)\,\dx \ge \sup_{\vphi\in\Phi}\int_\Omega \inf_{\substack{y\in\R^m\\z\in\R^{m\times n}}} [f+\cD\vphi](x,y,z)\,\dx.
\label{eq: relaxation simple 2}
\eeq
Standard Lagrangian duality (see, for instance,~\cite{Ekeland1999}) gives a possibly different lower bound $\cF^* \geq \cL^*$ that is also in the form of a maximization problem with supremum $\cL^*$. Like the original minimization, this Lagrangian dual problem is generally intractable, but it may be {relaxed} to obtain easier maximization problems whose maxima are lower bounds on $\cL^*$. The right-hand maximization in~\cref{eq: relaxation simple 2} is a particular relaxation of the Lagrangian dual; we call it the \emph{pointwise dual relaxation} (PDR) and denote it by $\Lpdr$. The PDR and its relation to the Lagrangian dual are described in \cref{sec:pointwise-dual} in a level of generality that allows for constraints and boundary integrals.

The right-hand PDR in~\cref{eq: relaxation simple 2} is still too hard to solve exactly in general, although we give solutions here for some cases. However, further relaxation leads to computationally tractable maximization problems for the broad class of problems with `polynomial data'---meaning that the integrand $f$ and all constraints are polynomial in the components of $(x,u,\nabla u)$. Seeking $\vphi(x,u)$ from a finite space of polynomials of degree $\nu$ or less relaxes the PDR into a linear optimization problem whose constraints amount to pointwise nonnegativity of polynomial expressions. The optimal value $\Lnu$ of this linear problem is a lower bound on $\Lpdr$ and, consequently, on $\cF^*$. Although pointwise nonnegativity of a multivariate polynomial is NP-hard to decide in general~\cite{Murty1987}, it can be enforced by the stronger condition that the polynomial is representable as a sum of squares (SOS) of other polynomials. Imposing these SOS conditions leads to an \emph{SOS program}---a convex maximization problem with SOS constraints and with tunable parameters appearing only linearly in the constraints and in the optimization objective. As detailed in \cref{sec:SOS}, the optimal value $\Lsos$ of this SOS program is a lower bound on $\Lnu$. Since SOS programs can be reformulated as semidefinite programs (SDPs) and solved numerically~\cite{Nesterov2000, Parrilo2000, Lasserre2001}, it is often tractable to compute the value $\Lsos$ and therefore obtain a numerical lower bound on $\cF^*$.

\subsection{Related work}\label{sec:related-work}

The approach sketched above to find lower bounds on the global minimum of a variational problem extends ideas used in~\cite{Valmorbida2015a, Valmorbida2016,Ahmadi2014b,Ahmadi2015,Ahmadi2016,Ahmadi2017,Ahmadi2019} to verify integral inequalities. Separately, \cite{Korda2018} proposed a measure-theoretic way to relax variational problems, wherein the original infimum $\cF^*$ is bounded below by the infimum $\Fomr$ of a convex minimization problem formulated using so-called occupation measures and boundary measures. For variational problems with polynomial data, $\Fomr$ can be bounded below by an infimum $\Fnu$ over sequences of moments truncated at degree $\nu$, which is in turn bounded below by an infimum $\Fmom$ over so-called pseudomoments that can be computed numerically via semidefinite programming.

\enlargethispage{\baselineskip}
The infima of the measure-theoretic relaxations of~\cite{Korda2018} are related to the suprema of the dual relaxations sketched in \cref{sec: sketch} by the inequalities
\begin{alignat}{13}
\label{eq:duality-relation-of-bounds}
\cF^* \; && \geq \; && \Fomr \; && \ge \; && \Fnu \; && \ge \; && \Fmom  \nonumber \\
\cwgeq \;\; && && \cwgeq \;\;\; && && \cwgeq \;\, && &&\cwgeq \quad\;
\\ \nonumber
\cL^*\; && \ge \; && \Lpdr \; && \ge \; && \Lnu \; && \ge \; && \Lsos.\;\;
\end{alignat}
Horizontal inequalities reflect relaxations: moving from left to right, constraints become less restrictive in the minimizations of the top row and more restrictive in the maximizations of the bottom row. Each vertical inequality reflects weak Lagrangian duality, meaning that the two problems are related by swapping the order of an infimum and supremum, hence the inequality. The duality $\cF^*\ge\cL^*$ is what defines $\cL^*$, and $\Fmom \geq \Lsos$ reflects the well-known duality between SOS programs and pseudomoment problems \cite{Lasserre2001,Laurent2009,Lasserre2015}.
The inequality $\Fomr \geq \Lpdr$ is shown in~\cite{Fantuzzi2022}, and the same arguments carry over to show that $\Fnu \geq \Lnu$.

The only quantities in \cref{eq:duality-relation-of-bounds} that are typically tractable to compute are $\Fmom$ and $\Lsos$, both of which are defined only in the case of polynomial data. In \cref{sec: formulation} we formulate the successive relaxations in the bottom row of \cref{eq:duality-relation-of-bounds} at a level of generality that allows for boundary integral terms as well as pointwise and integral constraints. The works~\cite{Korda2018,Fantuzzi2022} formulate and study $\Fomr$, from which $\Fnu$ and $\Fmom$ follow as described in~\cite{Korda2018}. To bound $\cF^*$ in applications, there is little difference in beginning with $\Lsos$ or with $\Fmom$ because both quantities are calculated as solutions to a pair of dual SDPs that are solved simultaneously by standard primal-dual algorithms.

For variational problems \cref{eq:primal-vp} in general, it is largely an open challenge to characterize when the various inequalities in \cref{eq:duality-relation-of-bounds} are or are not strict, especially as one takes $\nu\to\infty$ for the polynomial/moment degree in the four right-most quantities. (With finite $\nu$, the strict inequalities $\Lpdr>\Lnu$ and $\Fomr>\Fnu$ are typical.) Existing partial results concern whether the relaxation from $\cF^*$ to $\Fomr$ introduces a relaxation gap (i.e., a strict inequality) and whether the six right-most quantities in \cref{eq:duality-relation-of-bounds} are equal in the infinite-$\nu$ limit. In~\cite{Fantuzzi2022,Korda2022} it is proved that $\cF^*=\Fomr$ if the integrand $f$ satisfies certain convexity assumptions, but examples where $\cF^*>\Fomr$ are given also. As for the six right-hand quantities in \cref{eq:duality-relation-of-bounds}, in the infinite-$\nu$ limits (denoted by a subscript $\infty$) the equalities
\begin{alignat}{13}
\label{eq:duality-relation-compact}
\Fomr \; && = \; && \Finf \; && = \; && \Fmominf  \nonumber \\
\cweq \;\, && && \cweq \;\;\;\; && &&\cweq \quad\;\;
\\ \nonumber
\Lpdr \; && = \; && \Linf \; && = \; && \Lsosinf \;\;
\end{alignat}
have been proved under certain conditions. When constraints place $(x,u,\nabla u)$ values in a compact set, the left-hand equalities in the top and bottom lines of \cref{eq:duality-relation-compact} are guaranteed by the Weierstrass approximation theorem.
The \emph{strong duality} statement $\Fomr=\Lpdr$ is proved in \cite{Fantuzzi2022} under certain coercivity conditions, and their arguments immediately extend to establish that $\Fnu=\Lnu$ for every $\nu$, which in the infinite-$\nu$ limit gives the second vertical equality in \cref{eq:duality-relation-compact}.
In the case of polynomial data, for which the two right-hand quantities in \cref{eq:duality-relation-compact} are defined, the right-hand equalities in the top and bottom lines are proved in \cite[Theorem~3]{Korda2018} and our \cref{th:sos-variational-problem}, respectively, under a condition slightly stronger than the compactness of $(x,u,\nabla u)$ values (cf.\ \cref{rem:compactness}). This condition is stronger than those under which the other equalities are proven, so it suffices for all equalities in \cref{eq:duality-relation-compact}. Even outside of these conditions, we are not aware of counterexamples to any equalities in \cref{eq:duality-relation-compact}.

\subsection{Contributions}

The present work makes three main contributions. First, by making precise the ideas sketched above, \cref{sec: formulation} explains how to formulate pointwise dual relaxations that bound global optima of integral variational problems. In the case of polynomial data, we explain how further relaxation using SOS conditions gives computationally tractable SOS programs. Our relaxations are only slightly more general than the one- and two-dimensional formulations in~\cite{Valmorbida2016,Valmorbida2015a,Ahmadi2014b,Ahmadi2015,Ahmadi2016,Ahmadi2017,Ahmadi2019}, but this generality makes them dual to the measure-theoretic approach of~\cite{Korda2018}. 
We also generalize this framework to
families of integrands depending linearly on tunable parameters $\lambda\in\R^\tau$. This gives a way to relax optimizations over $\lambda$ where parametrized variational problems appear in \emph{constraints} such as $\cF^*(\lambda)\ge0$.
For the case of polynomial data, in \cref{sec:SOS} we prove general results regarding the convergence of the lower bounds $\Lsos$ to the PDR value $\Lpdr$ as $\nu$ is raised, along with analogous results for the generalized framework where variational problems appear in constraints.

Our second contribution is to prove in \cref{sec: sharp} that the PDR is sharp, meaning that $\cF^*=\Lpdr$ and likewise in the generalized framework, for three classes of problems that include Sturm--Liouville problems and Poincar\'e inequalities. We also show that further SOS relaxations are sharp for problems in these classes with polynomial data.

Finally, in \cref{sec:computational} we illustrate the numerical convergence of SOS programs that relax PDRs. We choose three examples with features not present in most previous computations~\cite{Valmorbida2015a,Valmorbida2016,Ahmadi2014b,Ahmadi2015,Ahmadi2016,Ahmadi2017,Ahmadi2019,Korda2018}, namely more complicated geometries or non-convex integrands. The observed convergence is guaranteed by our theoretical results for the first two examples---optimal constants of the Poincar\'e inequality for the $L^2$ norm on 2D domains with corners and for the $L^4$ norm on a 1D interval---but not for a non-convex example from~\cite{Pedregal2000} whose minimum is not attained.

\section{\label{sec: formulation}Pointwise dual relaxations}

The PDR approach for finding lower bounds on the integral variational problem~\cref{eq:primal-vp} can incorporate equality or inequality constraints on $u$ over the domain and/or its boundary, including nonlinear integral constraints, pointwise constraints, and boundary conditions. \Cref{ss:variational-problems} makes precise the 
ideas sketched in the introduction, arriving at a PDR formulation that includes constraints. \Cref{ss:integral-inequalities} generalizes this framework to optimization problems in which parametrized variational inequalities appear as constraints.
Further relaxations into SOS programs in the case of polynomial data are described in \cref{sec:SOS}.

\subsection{Integral variational problems}
\label{ss:variational-problems}

Consider the integral variational problem~\cref{eq:primal-vp}, where $\Omega$ is a bounded open Lipschitz domain. For simplicity we assume that the domain boundary is specified as a finite union, $\partial\Omega = \cup_{i=1}^s \partial\Omega_i$, where each $\partial\Omega_i$ is a smooth compact manifold.
We assume the integrand function $f$ in~\cref{eq:primal-vp} is continuous and gives a well defined integral over $\Omega$ for all $u\in W^{1,p}$ satisfying the constraints. For instance, in the absence of pointwise constraints that would ensure $u(x)$ and $\nabla u(x)$ are uniformly bounded, we assume that $|f(x,y,z)|$ grows no faster than $|y|^p + |z|^p$ as $|y|,|z|\to\infty$.
Such assumptions guarantee that the infimum in~\cref{eq:primal-vp} is well defined but \emph{not} that it is finite or attained. Further assumptions to guarantee the existence of minimizers, such as coercivity of $f$, are not needed---our approach bounds global infima below whether or not they are attained. More general growth conditions and classical results on the existence of minimizers can be found in~\cite{Dacorogna2008}.

For the purpose of illustration throughout this section, we suppose the constraints on $u$ in~\cref{eq:primal-vp} include one integral inequality, one pointwise equality on $\Omega$, and one equality on each boundary component $\partial\Omega_i$ (i.e, one boundary condition):
\begin{subequations}
	\label{eq:constraints-all}
	\begin{align}
	\label{int-both}
	\int_\Omega a(x,u,\nabla u) \,\dx &\ge 0,
	\\
	\label{pw-domain}
	c(x,u,\nabla u)&= 0 \quad x\in\Omega,
	\\
	\label{pw-boundary}
	d_i(x,u)&= 0 \quad x\in\partial\Omega_i,\quad i=1,\ldots,s.
	\end{align}
\end{subequations}
We assume that the functions $a,c,d_i$ are continuous and that $a(x,u,\nabla u)$ is integrable for all $u\in W^{1,p}$.
It is straightforward to include integral constraints that are equalities, pointwise constraints that are inequalities, and multiple constraints of each type, but~\cref{eq:constraints-all} suffices to explain how all such constraints can be incorporated.

\subsubsection{\label{sec:lagrange-dual}A Lagrange dual problem}

Lagrangian duality gives a standard way to pose a maximization problem whose supremum $\cL^*$ is a lower bound on the infimum $\cF^*$ of~\cref{eq:primal-vp}. Specifically, $\cF^*$ can first be written as a minmax problem wherein constraints are enforced by Lagrange multipliers, and then bounded below by the corresponding maxmin problem. In what follows we keep pointwise constraints explicit and introduce Lagrange multipliers only for integral constraints like~\cref{int-both}, so for the variational problem~\cref{eq:primal-vp} with constraints~\cref{eq:constraints-all} the Lagrange dual problem is derived by
\begin{subequations}
	\begin{align}
	\cF^* &= \inf_{\substack{u\in W^{1,p}\\ (\ref{eq:constraints-all}b,c)}}\sup_{\eta \in \mathbb{R}_+,}\int_\Omega[f-\eta a](x,u,\nabla u)\,\dx \label{eq:F lagrangian} \\
	 &\ge \sup_{\eta \in \mathbb{R}_+} \inf_{\substack{u\in W^{1,p}\\ (\ref{eq:constraints-all}b,c)}} \int_\Omega[f-\eta a](x,u,\nabla u)\,\dx =: \cL^*. \label{eq:L lagrangian}
	\end{align}
\end{subequations}
The scalar Lagrange multiplier $\eta$ must be in the nonnegative set $\R_+$ because \cref{int-both} is an integral nonnegativity constraint. (For an integral \emph{equality} constraint, the Lagrange multiplier can take any value in $\R$.) The inequality $\cF^*\ge\cL^*$ expresses weak duality; strong duality is said to hold if $\cF^*=\cL^*$.

Computing $\cL^*$ is hard because the inner minimization problem in~\cref{eq:L lagrangian}, like the minimization defining $\cF^*$, is generally non-convex and intractable. We therefore relax~\cref{eq:L lagrangian} to obtain more tractable maximizations giving lower bounds on $\cL^*$ that may or may not be sharp. \Cref{sec:pointwise-dual} explains the relaxations leading to the PDR, and \cref{sec:SOS} explains the further relaxation to SOS programs in the case of polynomial data.
Although additional Lagrange multipliers could be introduced for pointwise constraints such as~(\ref{eq:constraints-all}b,c) to formulate a Lagrange dual problem in a more standard form than~\cref{eq:L lagrangian}, the SOS programs ultimately obtained would be the same as those in~\cref{sec:SOS}.

\subsubsection{\label{sec:pointwise-dual}Pointwise dual relaxation}

Let $v:\Omega \to \mathbb{R}^{m\times n}$ denote the weak gradient $\nabla u$ and write $\cF^*$ with the definition of $v$ included as a constraint,
\beq
\cF^* = \inf_{\substack{u\in W^{1,p}\\v\in L^p}}\int_\Omega f(x,u,v)\,\dx  \;\;\text{ s.t.}\!\!
\begin{array}[t]{rll}
&\int_\Omega a(x,u,v) \,\dx \ge 0, &\text{(a)}\\[0.5ex]
&v = \nabla u \text{ weakly on }\Omega, &\text{(b)}\\[0.5ex]
&c(x,u,v)= 0 \text{ on }\Omega, &\text{(c)}\\[0.5ex]
&d_i(x,u)= 0 \text{ on }\partial\Omega_i,~ i=1,\ldots,s. &\text{(d)}
\end{array}
\label{eq:F-div-dual}
\eeq
To derive the PDR, we first formulate a dual maxmin problem that is similar to the Lagrange dual problem~\cref{eq:L lagrangian} but includes a Lagrange-multiplier-like function used to enforce $v=\nabla u$. Then, we bound the integral objective of this maxmin problem from below by the pointwise infimum of its integrand.

The constraint $v=\nabla u$ is imposed using the divergence theorem for vector-valued functions $x \mapsto \vphi(x,u(x))$, where $\varphi: \overline{\Omega}\times\mathbb{R}^m \to \mathbb{R}^n$ is continuously differentiable:
\beq
\label{eq:div-theorem}
\int_\Omega\diver_x\vphi(x,u(x))\,\dx - \sum_{i=1}^s \int_{\partial\Omega_i}\varphi(x,u)\cdot n_i(x) \,\dbound = 0.
\eeq
Here, $n_i(x)$ is the outward unit normal vector on $\partial\Omega_i$. Let $\Phi$ denote the set of $\vphi$ for which the above identity holds and for which the map $x \mapsto \varphi(x,u(x))$ is differentiable via the chain rule for all $u\in W^{1,p}(\Omega,\R^m)$. (See~\cite{Fantuzzi2022} for a more explicit characterization of $\Phi$.) Since $\cD\vphi$ in~\cref{eq: D} is defined in order to satisfy $\diver_x\vphi(x,u(x))=\cD\vphi(x,u,\nabla u)$, the left-hand integrand in~\cref{eq:div-theorem} can be replaced by $\cD\vphi(x,u,\nabla u)$. If $v=\nabla u$ weakly, therefore, identity~\cref{eq:div-theorem} holds for all $\vphi\in\Phi$ with the left-hand integrand replaced by $\cD\vphi(x,u,v)$. The proof of~\cite[Lemma~1]{Korda2018} implies that the converse is also true, so the divergence theorem characterizes the weak gradient $\nabla u$. The next Lemma summarizes these observations.
\begin{lemma}
\label{th:weak-derivative-lemma}
The functions $u \in W^{1,p}(\Omega;\mathbb{R}^m)$ and $v\in L^p(\Omega;\mathbb{R}^{m \times n})$ satisfy $v=\nabla u$ weakly on $\Omega$ if and only if
\begin{equation}
\label{eq:div-theorem-2}
\int_\Omega\mathcal{D}\varphi(x,u,v) \,\dx - \sum_{i=1}^s \int_{\partial\Omega_i}\varphi(x,u)\cdot n_i(x) \,\dbound = 0 \quad \forall \vphi \in \Phi,
\end{equation}
where $\cD\vphi$ is defined by~\cref{eq: D} and $\Phi$ is the subset of $C^1(\overline{\Omega}\times\R^m;\R^n)$
such that the divergence theorem~\cref{eq:div-theorem} holds for all $u\in W^{1,p}$.
\end{lemma}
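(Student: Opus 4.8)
The plan is to establish the two implications separately. The ``only if'' direction is a direct consequence of the definition of $\Phi$: if $v=\nabla u$ weakly and $\vphi\in\Phi$, then by definition the map $x\mapsto\vphi(x,u(x))$ is chain-rule differentiable, so its weak divergence equals $\cD\vphi(x,u,\nabla u)$ pointwise almost everywhere, which equals $\cD\vphi(x,u,v)$ because $v=\nabla u$. Substituting $\diver_x\vphi(x,u(x))=\cD\vphi(x,u,v)$ into the divergence identity~\cref{eq:div-theorem}, which is valid precisely because $\vphi\in\Phi$, yields~\cref{eq:div-theorem-2}; since $\vphi$ was arbitrary this finishes the direction.

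The substantive direction is ``if''. Assuming~\cref{eq:div-theorem-2} holds for every $\vphi\in\Phi$, I would recover the definition of the weak gradient by testing against a well-chosen family. Fix indices $i\in\{1,\ldots,n\}$ and $j\in\{1,\ldots,m\}$ and a scalar $\psi\in C_c^\infty(\Omega)$, and take $\vphi$ to be the field whose $i$-th component is $\psi(x)\,y_j$ and whose remaining components vanish. One checks that this $\vphi$ lies in $\Phi$: it is $C^1$ on $\overline{\Omega}\times\R^m$ since it is linear in $y$, and because $\psi$ is compactly supported in $\Omega$ the composition $x\mapsto\vphi(x,u(x))=\psi(x)u_j(x)\,e_i$ belongs to $W^{1,p}$ with compact support, so both the boundary integral and $\int_\Omega\diver_x\vphi(x,u(x))\,\dx$ vanish, while the product rule for weak derivatives shows the chain rule holds for $x\mapsto\vphi(x,u(x))$. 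For this $\vphi$ the definition~\cref{eq: D} gives $\cD\vphi(x,u,v)=u_j\,\partial_{x_i}\psi+v_{ji}\,\psi$, and $\vphi(x,u)\cdot n_i\equiv 0$ on $\partial\Omega$, so~\cref{eq:div-theorem-2} collapses to $\int_\Omega v_{ji}\psi\,\dx=-\int_\Omega u_j\,\partial_{x_i}\psi\,\dx$. Letting $i$, $j$, and $\psi$ vary, this is exactly the statement that $v=\nabla u$ weakly on $\Omega$.

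The main obstacle is the verification that the trial fields $\vphi=\psi(x)\,y_j\,e_i$ genuinely belong to $\Phi$, i.e., that the divergence theorem~\cref{eq:div-theorem} holds for them against every $u\in W^{1,p}$ and that $x\mapsto\vphi(x,u(x))$ is chain-rule differentiable. For fields that are linear in $y$ with smooth, compactly supported coefficients this follows from the product rule together with the vanishing of all boundary terms, but this is the one place where it is cleanest either to invoke the explicit characterization of $\Phi$ from~\cite{Fantuzzi2022} or to reuse the approximation argument behind~\cite[Lemma~1]{Korda2018}. Everything else reduces to routine computation with~\cref{eq: D}; note in particular that the boundary integrals in~\cref{eq:div-theorem-2} play no role in the ``if'' direction, since compactly supported test fields already suffice to characterize the weak gradient.
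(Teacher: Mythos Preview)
Your proposal is correct and follows essentially the same approach as the paper. The paper does not give a formal proof of this lemma; the paragraph preceding the statement handles the ``only if'' direction exactly as you do (via the definitions of $\Phi$ and $\cD\vphi$) and for the ``if'' direction simply cites the proof of \cite[Lemma~1]{Korda2018}. Your explicit choice of test fields $\vphi=\psi(x)\,y_j\,e_i$ with $\psi\in C_c^\infty(\Omega)$, reducing~\cref{eq:div-theorem-2} to the definition of the weak derivative, is precisely the argument behind that cited lemma, so you have merely spelled out what the paper delegates to the reference.
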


\Cref{th:weak-derivative-lemma} lets us rewrite~\cref{eq:F-div-dual} as a minmax problem in which $\vphi$ is a Lagrange-multiplier-like function enforcing $v=\nabla u$. As in \cref{sec:lagrange-dual}, we also introduce a Lagrange multiplier $\eta\in\R_+$ for the integral constraint~(\ref{eq:F-div-dual}a)
but keep the pointwise constraints explicit: 
\beq
\cF^*= \!\!\inf_{\substack{u\in W^{1,p}\\v\in L^p\\ \text{s.t.}\,\text{(\ref{eq:F-div-dual}c,d)}}}
	\sup_{\substack{\eta \in \mathbb{R}_+\\\varphi \in \Phi}}
	\left\{\!\int_\Omega \![f-\eta a + \mathcal{D}\vphi](x,u,v) \dx -\sum_{i=1}^s\!\int_{\partial\Omega_i}\!\!\varphi(x,u)\cdot n_i(x) \dbound \right\}.
\label{eq:F-div-dual-2}
\eeq
The fact that~\cref{eq:F-div-dual-2} is equal to~\cref{eq:F lagrangian} follows from the observation that the supremum of the left-hand side of~\cref{eq:div-theorem-2} over $\vphi\in\Phi$ is zero if $v=\nabla u$ weakly but is $\infty$ otherwise. Often, it is possible to constrain $\vphi$ so that the boundary integrals in~\cref{eq:F-div-dual-2} vanish for all $u\in W^{1,p}$ that satisfy the boundary conditions $d_i(x,u)=0$. In such cases, the dual relaxations derived below have no boundary terms. This boundary-free version was sketched in \cref{sec: sketch} and is formulated precisely in \cref{sec:no-boundary} below.

Exchanging the supremum over $\eta$ with the infima in~\cref{eq:F-div-dual-2} gives an expression equivalent to the Lagrange dual problem defined in~\cref{eq:L lagrangian}. Exchanging the suprema over both $\eta$ and $\vphi$ with the infima in~\cref{eq:F-div-dual-2}, therefore, gives a maxmin problem that bounds the optimal value $\cL^*$ of the Lagrange dual problem from below, after which taking pointwise infima of the continuous integrands produces a further lower bound:\small
\begin{subequations}
\begin{align}
\tag{\normalsize\theequation a}
\cL^*&\ge
\sup_{\substack{\eta \in \mathbb{R}_+\\\varphi \in \Phi}}
	\inf_{\substack{u\in W^{1,p}\\v\in L^p\\ \text{(\ref{eq:F-div-dual}c,d)}}}\!\!
	\left\{\!\int\limits_\Omega \![f-\eta a + \mathcal{D}\vphi](x,u,v)\,\dx -\sum_{i=1}^s \int\limits_{\partial\Omega_i}\!\!\varphi(x,u)\cdot n_i(x)\,\dbound \right\}
\label{eq:div-dual}
\\
\tag{\normalsize\theequation b}
&\ge
\sup_{\substack{\eta \in \mathbb{R}_+\\\varphi \in \Phi}}
\left\{\! \int\limits_{\Omega}\!\!\!\! \inf_{\substack{ y\in \R^m \\ z \in \R^{m\times n}\\c(x,y,z)=0}}\!\!\![f-\eta a + \mathcal{D}\vphi](x,y,z)\, \dx+ \sum_{i=1}^s \int\limits_{\partial\Omega_i}\!\! \inf_{\substack{y\in\R^m\\d_i(x,y)=0}}\!\!\!\!-\varphi(x,y)\cdot n_i(x)\dbound\right\}.
\label{eq:relaxed-dual}
\end{align}
\end{subequations}

\normalsize
Finally, we bound the pointwise infima in~\cref{eq:relaxed-dual} from below by the largest continuous functions $h\in C(\overline{\Omega},\R)$ and $\ell_i\in C(\partial\Omega_i,\R)$ satisfying
\begin{subequations}
	\label{eq:relaxed-inequalities-all}
	\begin{align}
	\label{eq:pw-inequality-bulk}
	f(x,y,z) - \eta\, a(x,y,z) + \mathcal{D}\varphi(x,y,z)  - h(x) &\geq 0 \; \text { on } \Gamma,\\
	-\varphi(x,y)\cdot n_i(x) - \ell_i(x) &\geq 0  \; \text { on } \Lambda_i, \quad i=1,\ldots,s,
	\label{eq:pw-inequality-boundary}
	\end{align}
\end{subequations}
where $\Gamma$ and $\Lambda_i$ are the sets on which the pointwise constraints are satisfied:
\begin{subequations}
\label{eq:Gamma-Lambda}
\begin{align}
\label{eq:Gamma-def}
\Gamma &:= \{(x,y,z) \in \Omega \times \mathbb{R}^m \times \mathbb{R}^{m \times n}\colon\; c(x,y,z)= 0\},\\
\Lambda_i &:= \{(x,y) \in \partial\Omega_i \times \mathbb{R}^m \colon \; d_i(x,y)= 0\}. \label{eq:Lambda-def}
\end{align}
\end{subequations}
This yields
\begin{equation}
\label{eq:pdr-2}
\cF^* \geq \cL^* \geq \Lpdr :=
\sup_{\substack{ \eta \in \mathbb{R}_+,\,\varphi \in \Phi,\\ h \in C(\overline{\Omega}),\, \ell_i \in C(\partial\Omega_i)\\ \text{\rm ~(\ref{eq:relaxed-inequalities-all}a,b)} }} \;
\left\{
\int_\Omega h(x) \,\dx
+\sum_{i=1}^s \int_{\partial\Omega_i} \ell_i(x) \,\dbound
\right\},
\end{equation}
which are some of the inequalities asserted in~\cref{eq:duality-relation-of-bounds}. The maximization problem in~\cref{eq:pdr-2} is what we call the \textit{pointwise dual relaxation} (PDR).

The pointwise estimates used to bound the right-hand side of \cref{eq:div-dual} below by \cref{eq:relaxed-dual} can be very conservative for suboptimal $\vphi$, often giving pointwise infima of $-\infty$ at some or all $x$ values. In such cases there fail to exist continuous $h$ or $\ell_i$ satisfying~\cref{eq:relaxed-inequalities-all}. When $\vphi$ is optimized along with $h$ and $\ell_i$, however, the estimates can be sharp: the equality $\cF^*=\Lpdr$ is proved for three classes of problems in \cref{sec: sharp} and for some other cases in \cite{Fantuzzi2022}. In general, it is an open challenge to characterize when the inequality $\cF^*\ge\Lpdr$ is an equality. Nonetheless, for any particular variational problem where one seeks an \emph{explicit} lower bound on the global infimum $\cF^*$, the PDR remains useful because often $\Lpdr$ can be bounded from below or, sometimes, computed exactly.

The reason $\Lpdr$ gives an explicit lower bound on $\cF^*$ more easily than either $\cL^*$ or the right-hand side of~\cref{eq:div-dual} is that, while all three maximizations are convex, the PDR does not require an inner minimization over functions $u\in W^{1,p}$; such minimizations are generally non-convex and intractable. Solving the PDR exactly is also typically impossible, but the lack of an inner minimization makes it easier to derive explicit lower bounds on $\Lpdr$. One can even hope to optimize this lower bound over the scalar $\eta$ and the functions $\vphi$, $h$, $\ell_1$, $\ldots$, $\ell_s$, at least when the search for such functions is restricted to finite-dimensional function spaces. \Cref{sec:SOS} describes how this can be done computationally by solving SOS programs when~\cref{eq:primal-vp} has polynomial data.

\subsubsection{\label{sec:no-boundary}Pointwise dual relaxation without boundary terms}

For certain $\vphi$, the integral $\int_\Omega\diver_x\vphi(x,u(x))\,\dx$ vanishes for all $u$ satisfying the problem constraints, and then the boundary integrals in~\cref{eq:div-theorem} can be omitted as in the sketch of \cref{sec: sketch}. For instance, if $u$ vanishes at the boundaries and $\vphi(x,y)=|y|^2F(x)$ for sufficiently regular $F:\Omega\to\R^n$, then the boundary integrals in~\cref{eq:div-theorem} vanish. This is true even when $F(x)$ is singular on $\partial\Omega$, provided the singularity is weak enough, in which case the method is well defined only without boundary integrals.

The requirement that $\int_\Omega\diver_x\vphi(x,u(x))\,\dx=0$ for all admissible $u$ generally excludes some $\vphi\in\Phi$, but it may also include some $\vphi\notin\Phi$ whose restriction to the boundary of $\Omega$ are not well defined. For any given constraints on $u$, we denote by $\widehat\Phi$ the set of  continuously differentiable $\vphi: \Omega \times \mathbb{R}^m \to \mathbb{R}^n$ for which $\int_\Omega\diver_x\vphi(x,u(x))\,\dx=0$, but which need not be well-defined on $\partial\Omega$. The dependence of the set $\widehat\Phi$ on boundary conditions and other constraints is in contrast to the set $\Phi$, whose elements only need to satisfy the divergence theorem identity~\cref{eq:div-theorem} for all $u\in W^{1,p}$. Optimizing over $\vphi\in\widehat\Phi$ instead of $\Phi$ leads to a version of the PDR~\cref{eq:pdr-2} without boundary terms:
\begin{equation}
\label{eq:pdr-no-boundary}
\cF^* \ge \cL^*  \ge \HLpdr :=
\sup_{\substack{ \eta \in \mathbb{R}_+,\,\varphi \in \widehat\Phi, \\  h \in C(\overline{\Omega}) \text{~s.t.~\cref{eq:pw-inequality-bulk}} }} \;
\int_\Omega h(x) \,\dx.
\end{equation}
Since neither the set $\Phi$ nor the set $\widehat\Phi$ contains the other, it is not clear which of $\Lpdr$ and $\HLpdr$ is the better lower bound on $\cF^*$. However, $\cF^*=\Lpdr=\HLpdr$ for the classes of problems considered in~\cref{sec: sharp}, which include the following example.

\begin{example}
\label{ex:simple-variational-problem}
Consider the variational problem mentioned in \cref{sec: sketch},
\beq
\label{eq: intro ex 1}
\mathcal{F}^*=\inf_{\substack{u \in W^{1,2}\\u(\pm \frac{\pi}{3})=0}}\int_{-\pi/3}^{\pi/3}(u_x^2-u^2-2u)\dx,
\eeq
For this example, the Euler--Lagrange equation $u_{xx}+u+1=0$ is necessary and sufficient for global optimality~\cite{Evans1998}, giving the minimizer $u^*(x)=2\cos(x)-1$ and the minimum $\mathcal{F}^*= 2(\tfrac{\pi}{3}-\sqrt{3})$. The PDR approach yields an exact lower bound on $\cF^*$, either by finding an optimal $\vphi\in\widehat\Phi$ yielding $\HLpdr\ge2(\tfrac{\pi}{3}-\sqrt{3})$, or by finding an optimizing sequence of $\vphi\in\Phi$ yielding the same lower bound on $\Lpdr$.

First we consider the boundary-free formulation~\cref{eq:pdr-no-boundary}, here with no multiplier $\eta$ because there is no integral constraint. The maximum $\HLpdr$ is attained with
\begin{align}
\vphi^*(x,y)&=
\left(\frac{2 \sin x}{2\cos x - 1}\right)y^2
&\text{and}&&
h^*(x) &= 1-2\cos x.
\label{eq:first-ex-phi}
\end{align}
These expressions are a special case of the construction in \cref{sec:quadratic-minimization} of optimal $\vphi^*$ in terms of $u^*$. In general, optimal $\vphi^*$ are not known explicitly, but one can construct near-optimal $\vphi$ computationally, as done for the present problem in \cref{sec:simple-variational-problem-sos}. The coefficient of $y^2$ in~\cref{eq:first-ex-phi} has a degree-one singularity as $x$ approaches the boundary points $\pm\pi/3$, but still the integral of $\cD\vphi(x,u,u_x)$ over $\Omega=(-\pi/3,\pi/3)$ vanishes for all $u\in W^{1,2}$ that go to zero at the boundaries. This $\vphi^*$, therefore, belongs to the admissible set $\widehat\Phi$ for the boundary-free formulation~\cref{eq:pdr-no-boundary}, but it does not belong to $\Phi$ since the boundary inequality~\cref{eq:pw-inequality-boundary} is not defined. With the chosen $\vphi^*$ and $h^*$,
	\beq
	[f+\cD\vphi^*-h^*](x,y,z)= \left[z + \left(\frac{2 \sin x}{2\cos x - 1}\right)y\right]^2
	    + \frac{\left( y - 2\cos x + 1\right)^2}{2\cos x - 1}.
	\eeq
This quantity is manifestly nonnegative for all $(x,y,z) \in \Gamma = \Omega \times \R \times \R$. Integrating $h^*$ over $\Omega$ gives $\HLpdr\ge2(\tfrac{\pi}{3}-\sqrt{3}) = \cF^*$, showing that $\cF^*=\HLpdr$ for this example. 

The formulation~\cref{eq:pdr-2} that includes boundary terms can give the same sharp lower bound on $\cF^*$, but we cannot use the function~$\vphi^*$ in \cref{eq:first-ex-phi} because it is singular on the boundary. Instead, we can move the singularities outside $\Omega$ by choosing
	\begin{align}
	\vphi(x,y) &=
	\left(\frac{2 \sin x}{2 \cos x - \omega}\right)y^2 
	&\text{and}&&
	h(x) &= 1 - \frac{2 \cos x}{\omega}
	\label{eq:first-ex-phi-2}
	\end{align}
with parameter $\omega\in(0,1)$. These functions are smooth on $\overline{\Omega}$ and approach the choices in~\cref{eq:first-ex-phi} as $\omega \nearrow 1$. In this limit these parametrized $\vphi$ and $h$, along with the corresponding choices $\ell_1=\ell_2=0$, form an optimizing sequence for~\cref{eq:pdr-2} that shows $\Lpdr\ge2(\tfrac{\pi}{3}-\sqrt{3})=\cF^*$, and so $\cF^*=\Lpdr$.
\end{example}

\subsection{Optimization subject to parametrized integral inequalities}
\label{ss:integral-inequalities}

The ideas of the PDR in~\cref{ss:variational-problems} can be used
to relax optimization problems for a parameter $\lambda\in\R^\tau$ in which parametrized integral inequalities $\cF^*(\lambda)\ge0$ appear as \emph{constraints}. In particular, we consider convex problems in the form
\begin{equation}
\label{eq:integral-inequality-setup}
\cB^* = \sup_{\lambda \in \R^\tau} \; b(\lambda) \quad
\text{s.t.} \quad
\mathcal{F}^*(\lambda):= \inf_{ \substack{u \in W^{1,p}\\\text{\rm s.t.~(\ref{eq:constraints-all}a--c)}}}
\int_{\Omega} f(x,u,\nabla u,\lambda) \,\dx \ge 0,
\end{equation}
where $b:\R^\tau \to \mathbb{R}$ is a concave cost function, and the integrand $f$ is as in \cref{ss:variational-problems} but also depends affinely on $\lambda$. The conditions~\cref{int-both,pw-domain,pw-boundary} defining admissible $u$ are still independent of $\lambda$. 
Problems of the form~\cref{eq:integral-inequality-setup} generalize the variational problems of \cref{ss:variational-problems}, which can be put in the form~\cref{eq:integral-inequality-setup} by letting $\tau=1$, $b(\lambda)=\lambda$, and $\cF^*(\lambda) = \inf \int_{\Omega} f(x,u,\nabla u) \dx - \lambda$. It is straightforward to further generalize~\cref{eq:integral-inequality-setup} with more than one variational constraint.

Applying the PDR approach of \cref{sec:pointwise-dual} to $\cF^*(\lambda)$ for fixed $\lambda$ gives a lower bound $\Lpdr(\lambda)$. Replacing the constraint $\cF^*(\lambda)\ge0$ in~\cref{eq:integral-inequality-setup} with the sufficient condition $\Lpdr(\lambda)\ge0$ gives 
\begin{equation}
\label{eq:integral-inequality-relaxation}
\cB^*\ge \Bpdr:= \sup_{\lambda \in \R^\tau} \; b(\lambda) \quad
\text{s.t.} \quad \Lpdr(\lambda) \ge 0.
\end{equation}
Alternatively, the boundary-free relaxation $\HLpdr$ described in \cref{sec:no-boundary} can be used in place of $\Lpdr$ in~\cref{eq:integral-inequality-relaxation}, giving a different lower bound $\cB^*\ge\HBpdr$. Using expressions~\cref{eq:pdr-2} and~\cref{eq:pdr-no-boundary} for $\Lpdr$ and $\HLpdr$ lets us write
\begin{subequations}
	\begin{align}
	\label{eq:Bpdr}
	\Bpdr&= \!\!\!\!
	\sup_{\substack{\lambda \in \R^\tau,\,\eta \in\R_+,\\ \varphi \in \Phi,\,h\in C(\overline{\Omega}),\\ \ell_i\in C(\partial\Omega_i)}}
	\!\!
	b(\lambda)\;
	\text{ s.t. (\ref{eq:relaxed-inequalities-all}a,b), }
	\displaystyle \int_\Omega h(x)\,\dx + \sum_{i=1}^s \int_{\partial\Omega_i} \ell_i(x)\,\dbound \ge 0, \\
	\label{eq:Bpdr-hat}
	\HBpdr&=
	\sup_{\substack{\lambda \in \R^\tau,\,\eta \in\R_+,\\ \varphi \in \widehat\Phi,\,h\in C(\overline{\Omega})}}
	b(\lambda)\;
	\text{ s.t. (\ref{eq:relaxed-inequalities-all}a), }
	\displaystyle \int_\Omega h(x)\,\dx \ge 0.
	\end{align}
\end{subequations}

Problems~\cref{eq:Bpdr,eq:Bpdr-hat} are convex because the cost function $b$ is concave and the optimization variables $\lambda,\eta,\vphi,h,\ell_i$ enter the constraints only linearly. These maximizations are exactly solvable in some simple cases, such as \cref{ex:poincare1d-simple-example} below, but not in general. Nonetheless, if one can find any suboptimal $\lambda,\eta,\vphi,h,\ell_i$ for~\cref{eq:Bpdr} or~\cref{eq:Bpdr-hat}, then evaluating $b(\lambda)$ gives a lower bound on $\cB^*$. Checking the constraints of either problem requires verifying pointwise inequalities on finite-dimensional sets, whereas checking the constraints of~\cref{eq:integral-inequality-setup} requires verifying integral inequalities for infinite-dimensional sets of $u$, which generally is much harder. The pointwise inequalities can also be hard to verify but, in the case of polynomial data,~\cref{eq:Bpdr,eq:Bpdr-hat} can be further relaxed to tractable SOS programs (see \cref{ss:sos-int-ineq}).

\begin{example}
\label{ex:poincare1d-simple-example}
The Poincar\'e inequality for functions $u\in W^{1,2}([-1,1])$ subject to vanishing Dirichlet boundary conditions states that $\lambda \Vert u\Vert_{L^2}^2\le \Vert u_x\Vert_{L^2}^2$  for some constant $\lambda > 0$. The largest such $\lambda$ is the solution of
\begin{equation}
\lambda^* = \sup\, \lambda \quad \text{s.t.} \quad
\int_{-1}^1 \left(u_x^2 - \lambda u^2\right) \,\dx \ge 0 \quad \forall u\in W^{1,2}: u(\pm 1)=0.
\end{equation}
The value $\lambda^* = \pi^2/4$ can be found by solving the Euler--Lagrange equation $-u_{xx}=\lambda u$, whose leading eigenfunction $u^*=\cos(\pi x/2)$ saturates the Poincar\'e inequality. 
Alternatively, the PDR approach can produce an exact lower bound on $\lambda^*$. In the present example, the relaxation~\cref{eq:Bpdr-hat} gives $\lambda^* \geq \HBpdr$ with
\begin{equation}
\label{eq:poincare-pdr}
\HBpdr = \!\!
\sup_{\substack{\lambda \in \R, \,\varphi \in \widehat\Phi\\ h\in C(\overline{\Omega})}}  \; \lambda
\;\; \text{ s.t.}
\begin{array}[t]{l}
z^2 - \lambda y^2 + \mathcal{D}\vphi(x,y,z) \ge h(x) \text{ on }(-1,1)\times \R \times \R,\\[0.5ex]
\int_\Omega h(x)\,\dx \ge 0,
\end{array}
\end{equation}
where $\mathcal{D}\vphi(x,y,z)=\vphi_x(x,y)+\vphi_y(x,y)z$.
The maximum $\HBpdr$ is attained with the optimizers $h^*= 0$ and $\vphi^*(x,y) = (\pi/2)\tan(\pi x/2) y^2$. Note that $\vphi^*=-(u^*_x/u^*)y^2$ in terms of the Euler--Lagrange solution $u^*(x)$, as suggested by the $\vphi$ constructed in~\cref{ss:elliptic-eigenvalues} for a generalization of the present example.\footnote{{This dual formulation for the optimal Poincar\'e constant is closely related to that in~\cite{Hersch1961} and related works, where maximizers that take the form $\nabla u^*/u^*$.}} This $\vphi^*$ is singular at the boundary points, but it still belongs to $\widehat\Phi$ because the integral of $\cD\vphi(x,u,u_x)$ over the domain vanishes for all $u\in W^{1,2}$ that go to zero at the boundaries. The second constraint in~\cref{eq:poincare-pdr} is trivially satisfied, and the first is satisfied for any $\lambda\le \pi^2/4$; with $\lambda=\pi^2/4$, the expression that must be nonnegative is equal to $[z+(\pi/2)\tan(\pi x/2)y]^2$. This gives the lower bound $\HBpdr\ge\pi^2/4$, which is the exact value of $\cB^*$ and $\HBpdr$ in this example. It is also the value of $\Bpdr$; an optimizing sequence for~\cref{eq:Bpdr-hat} showing $\Bpdr\ge\pi^2/4$ is $h,\ell_1,\ell_2=0$ and $\varphi=\sqrt\lambda\tan(\sqrt\lambda x)y^2$ with $\lambda\nearrow\pi^2/4$.
\end{example}

\begin{remark}
\label{rem:null}
The PDR~\cref{eq:pdr-2} makes use of the fact that the volume integral of $\cD\vphi(x,u,\nabla u)$ can be expressed as a boundary integral, meaning $\cD\vphi(x,u,\nabla u)$ is a \textit{null Lagrangian}. All null Lagrangian functions of $(x,u,\nabla u)$ can be generated from $\vphi(x,u)$ in this way when $m=1$ or $n=1$. With $m,n\ge2$, generalizing the PDR to include all possible null Lagrangians requires letting $\vphi$ have certain polynomial dependence on $\nabla u$ as well~\cite{Edelen1962}. This generalization is left for future work.
\end{remark}

\section{Relaxation to SOS programs for polynomial data}
\label{sec:SOS}

We now consider the particular case in which the integral variational problem~\cref{eq:primal-vp} or the integral-constrained optimization problem~\cref{eq:integral-inequality-setup} has polynomial data, meaning that all functions appearing as integrands or in pointwise constraints are polynomial in $(x,u,\nabla u)$, and that the domain $\Omega$ can be specified by polynomials in $x$. Precisely, we assume that $\overline\Omega$ is a basic closed semialgebraic set defined by $s$ polynomial inequalities,
\begin{equation}
\overline\Omega = \{x \in \R^n:\; g_1(x)\ge 0,\,\ldots,\,g_s(x)\ge 0\}.
\label{def-omega}
\end{equation}
We also assume that each $g_i$ vanishes identically on a single boundary component, so
\begin{align}
\partial\Omega_i
&= \{x \in \R^n:\;g_i(x)=0 \text{ and } g_j(x)\geq 0 \text{ for } j\neq i \},
\end{align}
and that $\nabla g_i$ does not vanish anywhere on $\partial\Omega_i$, so the outward unit normal vector is well defined as $n_i(x) = -\nabla g_i/|\nabla g_i|$. Then, the sets $\Gamma$ and $\Lambda_i$ defined in~\cref{eq:Gamma-def,eq:Lambda-def} admit the semialgebraic definitions
\begin{subequations}
\label{eq:Gamma-Lambda-semialg}
\begin{align}
\Gamma &= \{(x,y,z) \in \R^n \times \R^m \times \R^{m\times n}:
c(x,y,z)= 0,\,g_1(x)\ge 0,\ldots,g_s(x)\geq 0\},
\\
\Lambda_i &= \{(x,y) \in \R^n \times \R^m:\,
d_i(x,y) = 0,\,g_i(x)=0,\,
g_j(x)\geq 0 \text{ for } j\neq i
\}.
\end{align}
\end{subequations}
Under these assumptions, the PDRs $\Lpdr$ in~\cref{eq:pdr-2}, $\HLpdr$ in~\cref{eq:pdr-no-boundary}, $\Bpdr$ in \cref{eq:Bpdr}, and $\HBpdr$ in~\cref{eq:Bpdr-hat} can be further relaxed into finite-dimensional SOS programs that are computationally tractable and give lower bounds on the exact PDR values.

\subsection{\label{ss:sos-variational}Integral variational problems}

For any positive integer $\nu$, we can bound $\Lpdr$ from below by restricting the maximization in~\cref{eq:pdr-2} to 
\begin{align}
\vphi &\in \Phi\cap\R^n[x,y]_\nu, &
h &\in \R[x]_\nu, &
\ell_i &= q_i |\nabla g_i| \; \text{ with } \; q_i \in \R[x]_\nu,
\end{align}
where $\R^n[\cdot]_\nu$ is the space of $n$-dimensional vectors of polynomials in the bracketed variables whose degrees are at most $\nu$. More generally, one can choose any finite vector space of polynomials, 
or use rational functions with fixed nonnegative denominators (see \cref{rem:rational} below). The intersection with $\Phi$
excludes polynomials $\vphi(x,y)$ whose degree in $y$ is too large to guarantee integrability of $\cD\vphi(x,u,\nabla u)$ for all $u\in W^{1,p}$. Recalling that $n_i(x) = -\nabla g_i/|\nabla g_i|$ for each $i=1,\ldots,s$, we find
\begin{equation}
\label{eq:vp-finite-dimensional}
\Lpdr\ge
\Lnu :=\!\!\!\!\!\!\!
\sup_{ \substack{\vphi \in \Phi \cap \R^n[x,y]_\nu \\h, q_i \in \R[x]_\nu, \, \eta \in \R_+} }
\!\!\!\!\!\!L(h,q_1,\ldots,q_s)
\;\; \text{s.t.}
\begin{array}[t]{l}
f - \eta a + \mathcal{D}\varphi - h \geq 0  \,\text { on } \Gamma,\\
\varphi\cdot \nabla g_i - q_i|\nabla g_i|^2 \geq 0  \, \text { on } \Lambda_i, 
\end{array}
\end{equation}
where the maximization objective is
\beq
\label{eq:L-objective}
L(h,q_1,\ldots,q_s) : = \int_{\Omega} h(x) \, \dx + \sum_{i=1}^s \int_{\partial\Omega_i} q_i(x)|\nabla g_i(x)| \,\dbound.
\eeq
The tunable polynomials $h$ and $q_i$ appear linearly in $L$, so if they are expressed in any chosen basis with tunable coefficients, then evaluating the integrals in~\cref{eq:L-objective} for each polynomial basis function gives $L$ as an explicit linear function of the tunable coefficients. These integrals can be evaluated explicitly in some cases---e.g., when $\Omega$ is a polyhedron, a ball, or an intersection of such sets. Otherwise, they can be evaluated numerically to find the numbers that multiply the tunable coefficients in $L$.

Problem~\cref{eq:vp-finite-dimensional} is finite-dimensional but still intractable because pointwise polynomial inequalities are NP-hard to verify in general~\cite{Murty1987}. For this reason, we follow a standard approach~\cite{Parrilo2003,Lasserre2001} and strengthen all polynomial nonnegativity constraints into weighted SOS constraints. Letting $\Sigma[\cdot]$ denote the subset of nonnegative polynomials in the bracketed variables that can be written as sums of squares of other polynomials, we consider the sets
\begin{subequations}
\label{eq:quad-mods}
	\begin{align}
	\mathcal{Q}(\Gamma) &:= \bigg\{\rho c + \sigma_0 + \sum_{i=1}^s g_i \sigma_i:\; \sigma_k \in \Sigma[x,y,z], \,\rho \in \R[x,y,z] \bigg\}, \\
	\mathcal{Q}(\Lambda_i) &:= \bigg\{\rho_1 d_i + \rho_2 g_i  + \sigma_0 + \sum_{j\neq i} g_i \sigma_i:\; \sigma_k \in \Sigma[x,y], \,\rho_j \in \R[x,y] \bigg\}.
	\end{align}
\end{subequations}
By design, every polynomial in $\cQ(\Gamma)$ is nonnegative on $\Gamma$ since $c=0$ and $g_i\ge0$ on $\Gamma$, and $\sigma_i\ge0$. Similarly, every polynomial in $\cQ(\Lambda_i)$ is nonnegative on $\Lambda_i$. (The converse statements are not always true; see~\cite[Example~5.6]{Nie2013} for a counterexample.) We let $\cQ_\nu$ denote the restriction of these polynomial sets to degrees no larger than $\nu$:
\begin{align}
\label{eq:quad-mods-nu}
	\mathcal{Q}_\nu(\Gamma) &= \mathcal{Q}(\Gamma) \cap \R[x,y,z]_\nu, &
	\mathcal{Q}_\nu(\Lambda_i) &= \mathcal{Q}(\Lambda_i) \cap \R[x,y]_\nu.
\end{align}
Requiring a polynomial to belong to a finite-degree set such as $\cQ_\nu(\Gamma)$ or $\cQ_\nu(\Lambda_i)$ is a \emph{weighted SOS} condition, which can be included as a constraint in an SOS program.

The constraints of~\cref{eq:vp-finite-dimensional}, which enforce polynomial nonnegativity on semialgebraic sets, can be strengthened into weighted SOS constraints to obtain
\begin{equation}
\label{eq:vp-SOS}
\Lnu\ge\Lsos :=\!\!\!\!\!\!\!
\sup_{ \substack{\vphi \in \Phi \cap \R^n[x,y]_\nu\\h, q_i \in \R[x]_\nu, \, \eta \in \R_+} }
\!\!\!\!\!\!L(h,q_1,\ldots,q_s)
\;\; \text{s.t.}
\begin{array}[t]{l}
f - \eta a + \mathcal{D}\varphi - h \in \cQ_\nu(\Gamma),\\
\varphi\cdot \nabla g_i - q_i|\nabla g_i|^2 \in \cQ_\nu(\Lambda_i). 
\end{array}
\end{equation}
This is an SOS program, provided $L(h,q_1,\ldots,q_s)$ is expressed as an explicit linear function of the tunable coefficients of $h$ and $q_i$ as described after~\cref{eq:L-objective}. We likewise define $\HLsos$ as the SOS program that relaxes the boundary-free formulation $\HLpdr$,
\begin{equation}
\label{eq:L-sos-boundary-free}
\HLsos :=
\sup_{ \substack{\vphi \in \widehat\Phi \cap \R^n[x,y]_\nu,\\h \in \R[x]_\nu, \, \eta \in \R_+} }
L(h) \;\;\; \text{s.t.} \;\;\;
f - \eta a + \mathcal{D}\varphi - h \in \cQ_\nu(\Gamma).
\end{equation}
The computational cost of the SOS programs~\cref{eq:vp-SOS,eq:L-sos-boundary-free} rises with increasing dimension $n$ of $\Omega$, dimension $m$ of $u$, maximum degree $\nu$, and, in the case of~\cref{eq:vp-SOS}, the number of boundary components $s$. When these integers are not too large, the value of $\Lsos$ or $\HLsos$ can be computed numerically to obtain an explicit lower bound on $\cF^*$. This generally is not possible for the other problems in the sequence $\cL^*\ge\Lpdr\ge\Lnu\ge\Lsos$ of dual relaxations.

Raising the degree $\nu$ enlarges the polynomial spaces in~\cref{eq:vp-finite-dimensional} and~\cref{eq:vp-SOS}, so
\begin{alignat}{4}
\Lpdr\ge\Linf&&\;\ge\cdots\ge\LnuOne \; && \; \geq \; && \Lnu \; \ge \cdots \phantom{,} \nonumber  \\
\label{eq:L-relaxations}\cwgeq&&\cwgeq \;\,&& && \cwgeq \quad\quad\quad\;\; \\ \nonumber
\Lsosinf&&\ge\cdots\ge\;\LsosOne && \;\geq \; && \Lsos \ge \cdots,
\end{alignat}
where the $\infty$ subscript denotes the $\nu\to\infty$ limit. (Analogous inequalities relate the boundary-free relaxations $\HLpdr$, $\HLnu$ and $\HLsos$.) In general $\Lsos$ and $\Lnu$ are strictly smaller than $\Lpdr$ for fixed $\nu$, but one hopes that $\Lsosinf=\Lpdr$. When this is true and $\cF^*=\Lpdr$, computing $\Lsos$ with increasing $\nu$ gives arbitrarily sharp lower bounds on the global minimum $\cF^*$ of the original variational problem~\cref{eq:primal-vp}. Two theorems that guarantee $\Lsosinf=\Lpdr$ under different conditions are stated in \cref{sec:sos-conv} below. First, \cref{sec:simple-variational-problem-sos} demonstrates convergence in a computational example.

\begin{remark}
\label{rem:rational}
The SOS program \cref{eq:vp-SOS} can be generalized for non-polynomial $\varphi$ that are semialgebraic functions. In the case of rational $\vphi$ where the numerator is a tunable polynomial and the denominator $D$ is a fixed nonnegative polynomial, one simply multiplies the nonnegativity constraints~\cref{eq:pw-inequality-boundary} and~\cref{eq:pw-inequality-bulk} by $D$ and $D^2$, respectively, to obtain polynomial inequalities that can be strengthened into SOS conditions. It can be shown that for any $\Lsos$ value obtained with polynomial $\vphi$, there exists a degree $\nu'$ (dependent on $D$) for the numerator such that rational $\vphi$ give a value at least as large as $\LsosWithArg{\nu}$. Therefore, convergence $\Lsos\nearrow\Lpdr$ established in \cref{sec:sos-conv} for polynomial $\vphi$ 
	implies convergence with rational $\vphi$ for any fixed $D$, as long as zeros of $D$ do not introduce singularities that make $\vphi$ inadmissible. Similar statements apply to the boundary-free formulation and to the SOS programs of \cref{ss:sos-int-ineq}. As demonstrated in \cref{sec:computational}, using rational $\vphi$ with a good choice of $D$ can greatly accelerate convergence of the various SOS relaxations.
\end{remark}

\subsubsection{\label{sec:simple-variational-problem-sos}A computational example}

Consider once again the variational problem \cref{eq: intro ex 1}, for which it was shown in \cref{ex:simple-variational-problem} that both $\HLpdr$ and $\Lpdr$ are sharp lower bounds on $\cF^*=2(\pi/3-\sqrt{3})$. The optimal $\vphi(x,y)$ giving $\HLpdr$ and the optimizing sequence of $\vphi(x,y)$ giving $\Lpdr$ in \cref{ex:simple-variational-problem} are polynomial in $y$ but not in $x$, so they cannot be found by solving the SOS programs~\cref{eq:vp-SOS} or~\cref{eq:L-sos-boundary-free}. To show that polynomial $\vphi$ found by computing $\Lsos$ nonetheless give excellent lower bounds on $\Lpdr$, we have computed $\Lsos$ for all odd polynomial degrees $\nu\le101$.

For this example, the semialgebraic sets~\cref{eq:Gamma-Lambda-semialg} must encode the domain $(-\frac\pi3,\frac\pi3)$ and its endpoints. This can be done by letting $g_{1,2}(x)=\frac\pi3 \pm x$, so
\begin{subequations}
\begin{align}
\Gamma &= \{(x,y,z) \in \R^3: \, \tfrac{\pi}{3}+x\geq 0,\,\tfrac{\pi}{3}-x\geq 0 \}, \\
\Lambda_{1,2} &= \{ (x,y) \in \R^2: \,\tfrac{\pi}{3}\pm x= 0,\,\tfrac{\pi}{3}\mp x\geq 0,\, y=0 \} = \{(\mp \tfrac{\pi}{3},0)\}.
\end{align}
\end{subequations}
Since $\Lambda_1$ and $\Lambda_2$ are singletons, the corresponding weighted SOS constraints in~\cref{eq:vp-SOS} can be replaced by inequalities in which the polynomials $q_1,q_2$ are simply numbers. The Lagrange multiplier $\eta$ in~\cref{eq:vp-SOS} does not enter because the variational problem~\cref{eq: intro ex 1} has no integral constraints. With these simplifications,~\cref{eq:vp-SOS} becomes
\begin{align}
\label{simple-example-Q}
\Lsos =& \sup\limits_{\substack{\vphi \in \Phi\cap\mathbb{R}[x,y]_\nu \\h\in\R[x]_\nu,~q_1,q_2 \in \R}}
\bigg\{\int_{-\pi/3}^{\pi/3} h(x)\,\dx + q_1 + q_2 \bigg\} \\ \nonumber
&\text{s.t.} \quad \begin{array}[t]{l}
z^2 - y^2 - 2y + \vphi_x(x,y) + \vphi_y(x,y) z - h(x) \in\mathcal{Q}_\nu\left(\Lambda\right),\\
\varphi(-\pi/3,0) - q_1 \geq 0, \\
-\varphi(\pi/3,0) - q_2 \geq 0.
\end{array}
\end{align}
Polynomials $\vphi\in\Phi$ can be at most quadratic in $y$ since the variational problem~\cref{eq: intro ex 1} is posed over $u \in W^{1,2}$, and the arguments introducing $\vphi$ in~\cref{sec:pointwise-dual} rely on $x \mapsto \mathcal{D}\vphi(x,u(x),\nabla u(x))$ being integrable. We therefore let $\vphi(x,y) = F(x)y^2$ with tunable polynomial $F(x) \in \R[x]_\nu$. {The resulting SOS program was solved using the software described at the start of \cref{sec:computational}.}

\begin{figure}[t]
\center
\includegraphics[width=0.99\textwidth]{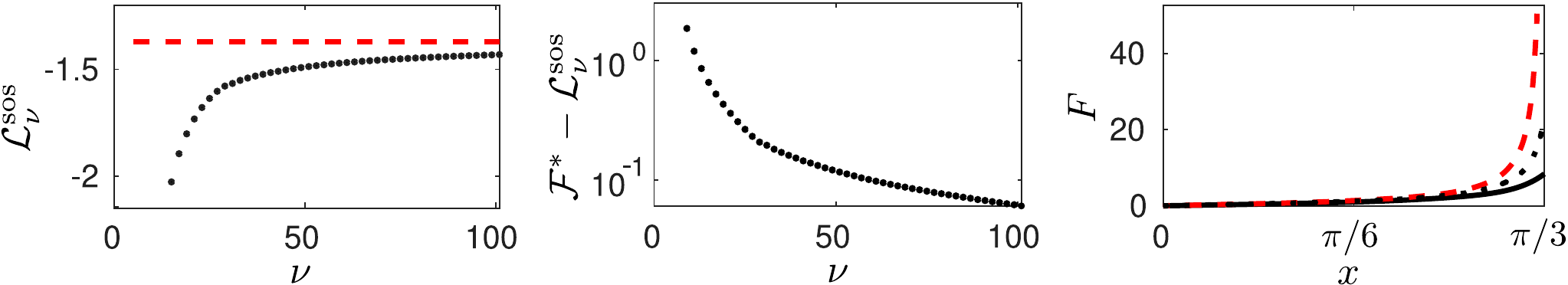}
\caption{Left: Lower bounds $\Lsos$ on the minimum $\mathcal{F}^*=2(\tfrac{\pi}{3}-\sqrt{3})$ (${\color{red}\dashedrule}$) of the variational problem~\cref{eq: intro ex 1}, computed by solving the SOS program~\cref{simple-example-Q} with $\vphi(x,y) = F(x)y^2$ and polynomial $F$ of odd degrees $\nu\le 101$. Middle: Gap between $\cF^*$ and $\Lsos$. Right: The optimal $F$ (${\color{red}\dashedrule}$) given by~\cref{eq:first-ex-phi}, along with $F$ that are optimal among polynomials of degree $\nu=11$ ($\solidrule$) and $\nu=21$ ($\dottedrule$). Each $F$ is odd and so is shown over only half its domain.}
\label{figure-simple-example}
\end{figure}

\Cref{figure-simple-example} shows the computed $\Lsos$ values for $\nu$ up to 101, as well as their decreasing distance from the value $\cF^*$ that they bound from below. These computations are carried out in (multiple precision) floating point arithmetic and so are subject to numerical error, but the SDP solver tolerances suggest errors smaller than plotting precision~\cite{SDPA}. Convergence of the exact $\Lsos$ values to $\cF^*$ is guaranteed by~\cref{th:sharpness-qm} below, although this theorem does not estimate the rate of convergence, which appears to be linear for large $\nu$.
As $\nu$ is raised, the $F(x)$ polynomials appear to converge pointwise to $F^*(x)=\frac{2 \sin x}{2\cos x - 1}$, which is the $x$-dependent part of the optimal $\vphi^*$ given in~\cref{eq:first-ex-phi}. Similarly, the $h$ polynomials (not plotted for brevity) appear to converge to the optimal $h^*$ in~\cref{eq:first-ex-phi}.

The relatively slow convergence of $\Lsos$ towards $\cF^*$ in this example is explained by the $F$ polynomials trying, in some sense, to approximate the function $F^*$ that is singular at $x=\pm\pi/3$. This naturally suggests that convergence of lower bounds may be accelerated by using \emph{rational} $\vphi$ that are singular at the boundaries; for the present example, we may take $\vphi(x,y) = F(x)y^2/(\pi^2/9-x^2)$. Such singular $\vphi$ can be used only in the boundary-free formulation $\HLpdr$. The SOS relaxation~\cref{eq:L-sos-boundary-free} can be generalized to this case as explained in \cref{rem:rational}, and it exhibits significantly faster convergence as $\nu$ is raised: solving the SOS program with rational $\vphi$ of numerator degree $\nu=5$ gives a lower bound on $\cF^*$ that is sharp to 4 digits, which is already much better than $\HLsos$ for polynomial $\vphi$ of degree $\nu=101$.

\subsubsection{\label{sec:sos-conv}Convergence of SOS relaxations}
Lower bounds $\Lsos$ on $\Lpdr$ obtained with the SOS relaxations described above can be guaranteed to converge to $\Lpdr$ in at least two cases.
The first case requires \emph{a priori} constraints that place the values of $(x,u,\nabla u)$ in a compact subset of $\overline\Omega\times\R^m\times\R^{m\times n}$. This is made precise by the following theorem, whose proof is given in \cref{ss:proof-sos-variational}. The proof follows a standard argument where near-optimal solutions to the PDR problem~\cref{eq:pdr-2} are approximated by polynomials, and then Putinar's Positivstellensatz~\cite{Putinar} guarantees weighted SOS representations of polynomials that are positive on compact sets.

\begin{theorem}
\label{th:sos-variational-problem}
If there exist $r_0,\ldots,r_s \in \R$ such that $r_0^2 - |(x,y,z)|^2 \in \mathcal{Q}(\Gamma)$ and $r_i^2 - |(x,y)|^2 \in \mathcal{Q}(\Lambda_i)$ for $i=1,\ldots,s$, then $\Lsos\nearrow\Lpdr$ as $\nu\to\infty$.
\end{theorem}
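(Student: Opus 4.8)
The plan is to combine the monotonicity of $\Lsos$ in $\nu$ with a polynomial-approximation argument. From \cref{eq:L-relaxations} we have $\Lsos \le \LsosOne$ and $\Lsos \le \Lnu \le \Lpdr$ for every $\nu$, so $\Lsos$ converges to a limit $\Lsosinf \le \Lpdr$, and it remains only to prove $\Lsosinf \ge \Lpdr$. For this I would show that any feasible tuple $(\eta,\vphi,h,\ell_1,\dots,\ell_s)$ of the PDR \cref{eq:pdr-2} can, at the cost of an arbitrarily small loss in the objective, be converted into a feasible tuple of the SOS program \cref{eq:vp-SOS} once $\nu$ is large enough. Since $\Lpdr$ is a supremum that need not be attained---and the case $\Lpdr=-\infty$ is trivial, because then $\Lsos\le\Lpdr$ forces $\Lsos=-\infty$ for every $\nu$---I would fix $\epsilon>0$ and start from an $\epsilon$-optimal feasible tuple whose objective exceeds $\Lpdr-\epsilon$.

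The first use of the hypothesis is as a compactness and boundedness statement: $r_0^2-|(x,y,z)|^2\in\mathcal{Q}(\Gamma)$ and $r_i^2-|(x,y)|^2\in\mathcal{Q}(\Lambda_i)$ say precisely that the quadratic modules $\mathcal{Q}(\Gamma)$ and $\mathcal{Q}(\Lambda_i)$ are Archimedean, and in particular that $\Gamma$ and each $\Lambda_i$ are compact, being closed (by continuity of $c$, $d_i$, $g_j$) and bounded (by $r_0$, resp.\ $r_i$). Two consequences will be used. First, every admissible $u$ satisfies $|u|,|\nabla u|\le r_0$ a.e.\ on $\Omega$, hence lies in $W^{1,\infty}$, so the divergence identity \cref{eq:div-theorem} holds for every $C^1$ vector field; consequently every polynomial $\vphi$ may be treated as an element of $\Phi$, and the intersections $\Phi\cap\R^n[x,y]_\nu$ in \cref{eq:vp-finite-dimensional,eq:vp-SOS} impose no restriction. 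Second, $f$, $a$, $\vphi$ together with its first derivatives, and the quantity $|z|$, are all bounded on the compact sets $\Gamma$ and $\Lambda_i$, which is exactly what makes the approximation below work.

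Next I would make the pointwise inequalities \cref{eq:relaxed-inequalities-all} strict and polynomial. Replacing $h$ by $h-\delta$ and each $\ell_i$ by $\ell_i-\delta$ for small $\delta>0$ keeps the tuple feasible and lowers the objective by only $\delta(|\Omega|+\sum_i|\partial\Omega_i|)$, while now $f-\eta a+\mathcal{D}\vphi-(h-\delta)\ge\delta$ on $\Gamma$ and, using $n_i=-\nabla g_i/|\nabla g_i|$, $-\vphi\cdot n_i-(\ell_i-\delta)\ge\delta$ on $\Lambda_i$. Then, using the Weierstrass and Stone--Weierstrass theorems on the compact sets $\overline\Omega\times\{|y|\le r_0\}$, $\overline\Omega$, and $\partial\Omega_i$, I would choose a polynomial $\tilde\vphi$ that is $C^1$-close to $\vphi$, a polynomial $\tilde h$ uniformly close to $h-\delta$, and a polynomial $\tilde q_i$ uniformly close to $q_i:=(\ell_i-\delta)/|\nabla g_i|$ (which is continuous because $|\nabla g_i|$ is bounded below on $\partial\Omega_i$), all to an accuracy small compared with $\delta$. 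Since $|z|\le r_0$ on $\Gamma$, $C^1$-closeness of $\tilde\vphi$ makes $\mathcal{D}\tilde\vphi$ uniformly close to $\mathcal{D}\vphi$ there, so the polynomials $p:=f-\eta a+\mathcal{D}\tilde\vphi-\tilde h$ and $p_i:=\tilde\vphi\cdot\nabla g_i-\tilde q_i|\nabla g_i|^2=|\nabla g_i|\,(-\tilde\vphi\cdot n_i-\tilde q_i|\nabla g_i|)$ remain strictly positive on $\Gamma$ and on $\Lambda_i$, respectively.

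Finally, the Archimedean property lets me invoke Putinar's Positivstellensatz \cite{Putinar}: $p\in\mathcal{Q}(\Gamma)$ and $p_i\in\mathcal{Q}(\Lambda_i)$, and being of degree at most some $N$ they lie in $\mathcal{Q}_\nu(\Gamma)$ and $\mathcal{Q}_\nu(\Lambda_i)$ for every $\nu\ge N$ (recall \cref{eq:quad-mods-nu}). Hence $(\eta,\tilde\vphi,\tilde h,\tilde q_1,\dots,\tilde q_s)$ is feasible for \cref{eq:vp-SOS} for all large $\nu$, with objective $\int_\Omega\tilde h+\sum_i\int_{\partial\Omega_i}\tilde q_i|\nabla g_i|$ at least $\Lpdr-\epsilon$ minus terms of size $O(\delta)$ and the approximation errors, the constants depending only on $\Omega$ and the $g_i$. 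Sending $\nu\to\infty$, then the approximation accuracy and $\delta$ to $0$, and finally $\epsilon\to0$, gives $\Lsosinf\ge\Lpdr$; the identical argument gives $\HLsos\nearrow\HLpdr$. I expect the one genuinely delicate point to be the bookkeeping that keeps the strict-feasibility margin $\delta$ ahead of all the approximation errors at once on the bulk set $\Gamma$ and on every boundary set $\Lambda_i$---in particular with the boundary multiplier forced into the form $q_i|\nabla g_i|$ dictated by \cref{eq:vp-SOS}---after which Putinar's theorem is applied as a black box. One should also check carefully that the compactness hypothesis genuinely removes the $W^{1,p}$-integrability restriction hidden in $\Phi$, since otherwise a high-$y$-degree approximant $\tilde\vphi$ might not be admissible.
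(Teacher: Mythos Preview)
Your proposal is correct and follows essentially the same approach as the paper's proof: fix an $\varepsilon$-optimal feasible tuple for \cref{eq:pdr-2}, perturb to create a strict margin, approximate $\vphi$ in $C^1$ and $h,\,\ell_i/|\nabla g_i|$ in $C^0$ by polynomials on the compact sets guaranteed by the Archimedean hypothesis, then apply Putinar's Positivstellensatz. One caveat: your final aside that ``the identical argument gives $\HLsos\nearrow\HLpdr$'' is not justified---the paper explicitly notes after the theorem that this extension fails because $\widehat\Phi$ can contain $\vphi$ that are singular on $\partial\Omega$ and hence cannot be uniformly approximated by polynomials on $\overline\Omega\times\{|y|\le r_0\}$; the boundary-free case needs the separate hypotheses of \cref{cor:L-sos-conv}.
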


\begin{remark}
\label{rem:compactness}
The assumption of \cref{th:sos-variational-problem} implies $|(x,y,z)|^2\le r_0^2$ on $\Gamma$ and $|(x,y)|^2\le r_i^2$ on $\Lambda_i$, so a necessary (but not sufficient) condition for the theorem to apply is that the sets $\Gamma$ and $\Lambda_i$ are compact. This can happen only if the variational problem~\cref{eq:primal-vp} has pointwise constraints that uniformly bound $|u|$ and $|\nabla u|$. Constraints implying uniform bounds can often be added to a variational problem, but usually it is unclear \emph{a priori} how large these bounds should be in order to not change the optimizer. When pointwise constraints do render $\Gamma$ and $\Lambda_i$ compact, \cref{th:sos-variational-problem} either applies or can be made applicable. Although it is possible for $\Gamma$ and $\Lambda_i$ to be compact without having $r_0^2-|(x,y,z)|^2\in\mathcal{Q}(\Gamma)$ and $r_i^2-|(x,y)|^2\in\mathcal{Q}(\Lambda_i)$~\cite[Chapter~2]{Lasserre2015}, in such cases one can add the constraints $r_0^2-|(x,y,z)|^2\ge0$ and $r_i^2-|(x,y)|^2\ge0$ to the semialgebraic definitions~\cref{eq:Gamma-Lambda-semialg} of the sets $\Gamma$ and $\Lambda_i$, respectively, with $r_0$ and $r_i$ sufficiently large that the compact sets themselves do not change. This meets the assumptions of \cref{th:sos-variational-problem} without changing $\cF^*$ or $\Lpdr$; only $\Lsos$ changes by the enlargement of the polynomial sets $\cQ_\nu(\Gamma)$ and $\cQ_\nu(\Lambda_i)$ in~\cref{eq:vp-SOS}.
\end{remark}

The second case in which we can guarantee $\Lsos \nearrow \Lpdr$ is when $u$ satisfies homogenous Dirichlet boundary conditions and the PDR problem~\cref{eq:pdr-2} admits an optimizing sequence such that inequality~\cref{eq:pw-inequality-bulk} is polynomial in $(y,z)$ and is strictly satisfied in the sense of the second condition in \cref{th:sos-convergence-noncompact} below. 
The assumptions of this theorem hold, for instance, for the variational problem of \cref{ex:simple-variational-problem} with the optimizing sequence constructed there. 

\begin{theorem}
\label{th:sos-convergence-noncompact}
Suppose that $u$ vanishes on all boundary components $\partial\Omega_i$. Then, $\Lsos \nearrow \Lpdr$ as $\nu \to \infty$ if:
	\begin{enumerate}[leftmargin=*,labelwidth=1ex]
		\item There exists $r \in \R$ such that $r^2 - \|x\|^2 \in \mathcal{Q}(\Omega)$.
		\item Problem~\cref{eq:pdr-2} admits a maximizing sequence $\{(\eta^k,\varphi^k,h^k,\ell_1^k,\ldots,\ell_s^k)\}_{k \geq 1}$ where $\varphi^k(x,y)$ is polynomial in $y$ and where there exists a polynomial vector $m(y,z)$ and continuous symmetric-matrix-valued function $H_k:\overline\Omega\to\mathbb{S}^n$ with $H_k(x)\succ0$ on $\Omega$ such that, for all $(x,y,z)\in\Gamma$,
		\begin{equation}\label{e:matrix-representation-assumption}
		f(x,y,z) + \mathcal{D}\varphi^k(x,y,z) -\eta^k a(x,y,z)- h^k(x) = m(y,z)^\mathsf{T} H_k(x) m(y,z).
		\end{equation}
\end{enumerate}
\end{theorem}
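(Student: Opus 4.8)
The plan is to show that for every $\epsilon>0$ some finite degree $\nu$ achieves $\Lsos\ge\Lpdr-\epsilon$; combined with the monotonicity of $\Lsos$ in $\nu$ and with $\Lsos\le\Lpdr$ from \cref{eq:L-relaxations}, this gives $\Lsos\nearrow\Lpdr$. Fix $\epsilon>0$, take an element $(\eta^k,\varphi^k,h^k,\ell_1^k,\dots,\ell_s^k)$ of the maximizing sequence in hypothesis~2 with objective above $\Lpdr-\tfrac{\epsilon}{2}$, and let $m(y,z)$ and $H_k$ be as in \cref{e:matrix-representation-assumption}. Because $u$ vanishes on $\partial\Omega$ I may assume $\varphi^k(x,0)\equiv0$: subtracting $\varphi^k(x,0)$ from $\varphi^k$ and $\diver_x\varphi^k(x,0)$ from $h^k$ changes neither \cref{e:matrix-representation-assumption} nor the objective of \cref{eq:pdr-2}, while making the boundary integrals in \cref{eq:div-theorem} vanish. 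Since then the components of $d_i(x,y)=y$ divide $\varphi^k\cdot\nabla g_i$, the boundary SOS constraints in \cref{eq:vp-SOS} will hold with $q_i\equiv0$, the boundary part of the objective is $0$, and only $\int_\Omega h$ needs to be tracked.

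The heart of the proof is a modification of this near-optimal triple that makes the coefficient matrix \emph{uniformly} positive definite on the \emph{closed} domain: $H_k$ is continuous on $\overline\Omega$ and positive definite on $\Omega$, hence positive semidefinite on $\overline\Omega$, but it is typically singular on $\partial\Omega$, as already happens for the optimizing sequence of \cref{ex:simple-variational-problem}. One cannot cure this by adding a multiple of $|m(y,z)|^2$ to the integrand, because $|m(y,z)|^2$ is not of the form $\cD\varphi-h$. The homogeneous Dirichlet condition supplies the needed freedom: for any polynomial $g$ with $g(0)=0$ and any bounded $w$, $\cD[w(x)g(y)]$ is a null Lagrangian whose boundary integral vanishes because $g(u)$ has zero trace, and adding small multiples of such terms — together with subtracting a small function of $x$ from $h^k$ — perturbs every entry of the coefficient matrix except those indexed by monomials of degree $\ge2$ in $z$, which (as in the examples) remain uniformly positive definite on $\overline\Omega$. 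After a Schur complement onto the $z$-free monomials in $m$, this reduces to pushing a continuous positive-semidefinite matrix field to a strictly positive one, which these perturbations accomplish at the cost of an arbitrarily small change of $\int_\Omega h$; a convenient device is to convex-combine with a fixed admissible triple whose matrix field is positive definite on all of $\overline\Omega$, such a triple being exhibited from the Riccati-type differential inequality encoding $H\succ0$ exactly as in \cref{ex:simple-variational-problem}. The outcome is a triple $(\eta^\star,\varphi^\star,h^\star)$ admissible for \cref{eq:pdr-2}, with $\varphi^\star$ polynomial in $y$, $\varphi^\star(x,0)\equiv0$, objective above $\Lpdr-\tfrac{3\epsilon}{4}$, and $m^\mathsf{T}H^\star m=f-\eta^\star a+\cD\varphi^\star-h^\star$ with $H^\star\succeq\mu I$ on $\overline\Omega$ for some $\mu>0$.

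To obtain a polynomial feasible point I approximate: since $\varphi^\star\in C^1(\overline\Omega\times\R^m;\R^n)$ with polynomial $y$-dependence, the Weierstrass theorem gives, for $\nu$ large, a polynomial $\widehat\varphi\in\Phi\cap\R^n[x,y]_\nu$ with the same $y$-monomials as $\varphi^\star$ and a polynomial $\widehat h\in\R[x]_\nu$ approximating $\varphi^\star$ in $C^1$ in $x$ and $h^\star$ uniformly. Keeping $\eta^\star$ and defining $\widehat H$ by $m^\mathsf{T}\widehat H m=f-\eta^\star a+\cD\widehat\varphi-\widehat h$, the coefficient matrix $\widehat H-H^\star$ has arbitrarily small uniform norm on $\overline\Omega$, so $\widehat H\succ0$ there and $\int_\Omega\widehat h>\Lpdr-\epsilon$. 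By hypothesis~1 the compact set $\overline\Omega$ is Archimedean, so the matrix analogue of Putinar's Positivstellensatz \cite{Putinar} yields a bounded-degree decomposition $\widehat H=S_0+\sum_{i=1}^s g_iS_i$ with SOS matrix polynomials $S_j$; then $m^\mathsf{T}\widehat Hm=m^\mathsf{T}S_0m+\sum_i g_i\,m^\mathsf{T}S_im$ with each $m^\mathsf{T}S_jm$ an ordinary SOS polynomial, so $f-\eta^\star a+\cD\widehat\varphi-\widehat h\in\cQ_\nu(\Gamma)$ once $\nu$ exceeds the degrees involved — when a pointwise constraint $c$ is present, the discrepancy between this polynomial identity and its restriction to $\Gamma$ is absorbed by the ideal term $\rho\,c$ of $\cQ(\Gamma)$. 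Hence $(\eta^\star,\widehat\varphi,\widehat h,0,\dots,0)$ is feasible for \cref{eq:vp-SOS} at degree $\nu$ with objective $\int_\Omega\widehat h>\Lpdr-\epsilon$, giving $\Lsos\ge\Lpdr-\epsilon$ and completing the argument.

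The step I expect to be the main obstacle is the one in the second paragraph: promoting $H_k\succ0$ on $\Omega$ to uniform positive definiteness on $\overline\Omega$. The boundary degeneracy of the optimal $\varphi$ is intrinsic to the PDR, cannot be removed by the freely available additions to $h$, and is precisely where the homogeneous Dirichlet hypothesis is indispensable, through the boundary-free null Lagrangians $\cD[w(x)g(y)]$ with $g(0)=0$. Carrying out this perturbation uniformly along the maximizing sequence, and checking that the high-$z$-degree part of $H_k$ does not degenerate up to the boundary (which needs $f-\eta^k a$ to be uniformly convex in $\nabla u$ along the sequence), is the real work; everything else — polynomial approximation, the matrix Positivstellensatz, and bookkeeping of the objective — is standard.
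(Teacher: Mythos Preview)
Your proposal misidentifies where the difficulty lies. The paper's proof (Appendix~B) takes the uniform bound $H_k(x)\succeq\delta_k I$ on $\Omega$ as immediate from the hypothesis; this is consistent with every application in \cref{sec: sharp}, where the maximizing sequences are constructed with $H_k\succ0$ on the \emph{closed} domain $\overline\Omega$ (see e.g.\ the proof of \cref{th:sharpness-qm}: ``we seek\ldots $H_k(x)$ positive definite on $\overline{\Omega}$''). So despite the statement saying ``on $\Omega$'', the operative hypothesis is positive definiteness on $\overline\Omega$, and your entire second paragraph --- which you flag as the main obstacle --- is addressing a step that is simply assumed. Under the intended hypothesis there is nothing to do there.

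If instead you insist on the literal reading ($H_k\succ0$ only on the open $\Omega$, possibly degenerating on $\partial\Omega$), then your second-paragraph argument is not a proof: the ``convenient device'' of convex-combining with a fixed triple whose matrix is positive definite on all of $\overline\Omega$ presupposes the existence of such a triple, which you do not establish in general, and the claim that the high-$z$-degree block of $H_k$ stays uniformly positive definite up to $\partial\Omega$ is an extra assumption you import (``needs $f-\eta^k a$ to be uniformly convex in $\nabla u$'') rather than derive. So on the literal reading your argument has a genuine gap, and on the intended reading it is unnecessary.

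Two smaller points. First, your reduction in the opening paragraph to $\varphi^k(x,0)\equiv0$ and $q_i\equiv0$ is a legitimate alternative to what the paper does (the paper keeps the boundary terms and handles them with polynomial $q_i$ and Putinar on the compact $\partial\Omega_i$); note however that the objective is not preserved by this reduction but only \emph{increased}, which is harmless. Second, in your third paragraph you write ``defining $\widehat H$ by $m^{\mathsf T}\widehat H m=\ldots$'' and then assert $\widehat H-H^\star$ is small; this is ambiguous because the quadratic form determines $\widehat H$ only up to a gauge matrix $G$ with $m^{\mathsf T}Gm=0$. The paper handles this explicitly: it writes the canonical coefficient matrix $F_k$, sets $G_k=H_k-F_k$, approximates $G_k$ by a polynomial matrix $G$ with $m^{\mathsf T}Gm=0$, and applies the matrix Positivstellensatz to $F+G$. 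You should do the same.
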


\begin{proof}
Details of the proof are given in \cref{ss:proof-sos-convergence-noncompact}. Briefly, since $\Omega$ is compact and $\varphi^k$ is polynomial in $y$, the Weierstrass approximation theorem can be used to show that there exist polynomials $(\eta^k,\varphi^k,h^k,q_1^k,\ldots,q_s^k)$ such that:
\begin{enumerate}[(a), leftmargin=*,labelwidth=4pt]
\item\label{condition-opt} $\int_\Omega h^k \,\dx+\sum_{i=1}^s \int_{\partial\Omega_i} q_i^k \abs{\nabla g_i} \,\dbound \nearrow \Lpdr$,
\item\label{condition-bnd} $\varphi^k \cdot \nabla g_i - q_i^k\abs{\nabla g_i}^2 > 0$ on $\Lambda_i = \partial\Omega_i \times \{0\}$, and
\item\label{condition-bulk} Identity \cref{e:matrix-representation-assumption} holds with a positive definite polynomial matrix $H_k$.
\end{enumerate}
Condition~\ref{condition-opt} implies that $\Lsos \nearrow \Lpdr$ if $(\eta^k,\varphi^k,h^k,q_1^k,\ldots,q_s^k)$ is feasible for the SOS program~\cref{eq:vp-SOS} for some $\nu$. This is true because Putinar's Positivstellensatz~\cite{Putinar} and condition~\ref{condition-bnd} imply $\varphi^k \cdot \nabla g_i - q_i^k\abs{\nabla g_i}^2 \in \mathcal{Q}_{\nu}(\Lambda_i)$ for large enough $\nu$. Similarly, thanks to condition~\ref{condition-bulk}, one can apply to $H_k$ a Positivstellensatz for polynomial matrices that are positive definite on compact sets~\cite{Scherer2006}  to obtain $f + \mathcal{D}\varphi^k -\eta^k a- h^k \in \mathcal{Q}_{\nu}(\Gamma)$ for large enough $\nu$.
\end{proof}

To show convergence $\HLsos \nearrow \HLpdr$ for the boundary-free relaxations, \Cref{th:sos-variational-problem} cannot easily be adapted because its proof relies on uniform polynomial approximation of possibly non-polynomial $\vphi \in \Phi$. This is not generally possible when the set $\Phi$ is replaced by $\widehat{\Phi}$, which includes singular $\vphi$. With homogenous Dirichlet boundary conditions, however, the following theorem extends \cref{th:sos-convergence-noncompact} to the boundary-free case, provided that $\HLpdr$ admits an optimizing sequence of \emph{nonsingular} $\vphi$.

\begin{theorem}
\label{cor:L-sos-conv}
Suppose that $u$ vanishes on all boundary components $\partial\Omega_i$. Then, $\HLsos \nearrow \HLpdr$ as $\nu \to \infty$ if:
\begin{enumerate}[leftmargin=*,labelwidth=1ex]
	\item There exists $r \in \R$ such that $r^2 - \|x\|^2 \in \mathcal{Q}(\Omega)$.
	\item Assumption 2 of \cref{th:sos-convergence-noncompact} holds with each $\varphi^k \in \Phi \cap \widehat\Phi$.
\end{enumerate}
\end{theorem}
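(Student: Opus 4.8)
The plan is to adapt the proof of~\cref{th:sos-convergence-noncompact} to the boundary-free setting, the only genuinely new point being that polynomial approximation does not respect the constraint $\varphi\in\widehat\Phi$, so a small correction must be inserted. As always, the easy half is the relaxation chain: enlarging the polynomial spaces with $\nu$ gives $\HLsos\le\HLnu\le\HLpdr$, with $\HLsos$ nondecreasing in $\nu$ (the boundary-free analogue of~\cref{eq:L-relaxations}), so it will suffice to construct, for a sequence $\nu_k\to\infty$, triples $(\eta^k,\widetilde\varphi^k,\widetilde h^k)\in\R_+\times(\widehat\Phi\cap\R^n[x,y]_{\nu_k})\times\R[x]_{\nu_k}$ feasible for the SOS program~\cref{eq:L-sos-boundary-free} whose objectives $\int_\Omega\widetilde h^k\,\dx$ converge to $\HLpdr$. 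The hypothesis supplies the sequence to be approximated: under homogeneous Dirichlet conditions, for $\varphi\in\Phi\cap\widehat\Phi$ the divergence identity~\cref{eq:div-theorem} applied to an admissible $u$ (which vanishes on each $\partial\Omega_i$, so $\varphi(x,u(x))=\varphi(x,0)$ there) gives $\sum_i\int_{\partial\Omega_i}\varphi(x,0)\cdot n_i\,\dbound=\int_\Omega\diver_x\varphi(x,u(x))\,\dx=0$, the last equality being the definition of $\widehat\Phi$; hence the optimal boundary multipliers $\ell_i=-\varphi(\cdot,0)\cdot n_i$ satisfy~\cref{eq:pw-inequality-boundary} with equality and contribute nothing to the objective of~\cref{eq:pdr-2}, so the boundary-full and boundary-free problems coincide when restricted to $\varphi\in\Phi\cap\widehat\Phi$. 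Consequently the hypothesis — that~\cref{eq:pdr-2} has a maximizing sequence with each $\varphi^k\in\Phi\cap\widehat\Phi$ polynomial in $y$ and satisfying~\cref{e:matrix-representation-assumption}, together with the accompanying stipulation (see the text preceding the theorem) that $\HLpdr$ itself is attained in the limit by nonsingular $\varphi$ — supplies a maximizing sequence $\{(\eta^k,\varphi^k,h^k)\}$ for~\cref{eq:pdr-no-boundary} with these same properties.

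Next I would run the approximation argument of~\cref{th:sos-convergence-noncompact}, inserting the correction step. Because $\overline\Omega$ is compact and each $\varphi^k$ is polynomial in $y$ with coefficients in $C^1(\overline\Omega)$, Weierstrass approximation in $C^1(\overline\Omega)$ — so that $\mathcal{D}\varphi^k$ is approximated along with $\varphi^k$ — produces polynomials $\varphi^k_{\mathrm{poly}}$ of the same $y$-degree (hence still admissible for the integrability requirement in the definition of $\Phi$) and $h^k_{\mathrm{poly}}$ with $\int_\Omega h^k_{\mathrm{poly}}\,\dx\to\int_\Omega h^k\,\dx$, for which $f+\mathcal{D}\varphi^k_{\mathrm{poly}}-\eta^k a-h^k_{\mathrm{poly}}=m(y,z)^{\mathsf{T}}H^{\mathrm{poly}}_k(x)m(y,z)$ with $H^{\mathrm{poly}}_k$ a polynomial matrix that is positive definite on $\overline\Omega$ for a sufficiently fine approximation, exactly as in that proof. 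The correction is this: since $\varphi^k_{\mathrm{poly}}(\cdot,0)\to\varphi^k(\cdot,0)$ uniformly on $\partial\Omega$ and the corresponding integral of $\varphi^k$ vanishes by the previous paragraph, the number $\epsilon_k:=\sum_i\int_{\partial\Omega_i}\varphi^k_{\mathrm{poly}}(x,0)\cdot n_i\,\dbound$ tends to $0$; taking $\psi_k(x):=(\epsilon_k/|\Omega|)(x_1,0,\dots,0)^{\mathsf{T}}$, the corrected $\widetilde\varphi^k:=\varphi^k_{\mathrm{poly}}-\psi_k$ lies in $\widehat\Phi\cap\R^n[x,y]_{\nu_k}$ (its divergence integrates to $0$ over $\Omega$ against any admissible $u$), while $\widetilde h^k:=h^k_{\mathrm{poly}}-\epsilon_k/|\Omega|$ leaves the bulk polynomial $f+\mathcal{D}\widetilde\varphi^k-\eta^k a-\widetilde h^k=m^{\mathsf{T}}H^{\mathrm{poly}}_k m$ unchanged (because $\mathcal{D}\psi_k\equiv\diver_x\psi_k\equiv\epsilon_k/|\Omega|$) and changes the objective only by $-\epsilon_k$. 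Finally, Assumption~1 makes the quadratic module of $\overline\Omega$ Archimedean, so the matrix Positivstellensatz of~\cite{Scherer2006} writes $H^{\mathrm{poly}}_k=S_0+\sum_i g_i S_i$ with sum-of-squares matrices $S_0,S_i$, whence $f+\mathcal{D}\widetilde\varphi^k-\eta^k a-\widetilde h^k=m^{\mathsf{T}}S_0 m+\sum_i g_i\,m^{\mathsf{T}}S_i m\in\mathcal{Q}_{\nu_k}(\Gamma)$ for $\nu_k$ large; this makes $(\eta^k,\widetilde\varphi^k,\widetilde h^k)$ feasible for~\cref{eq:L-sos-boundary-free} at degree $\nu_k$ with objective $\to\HLpdr$, completing the proof.

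The step I expect to be the main obstacle — and the reason this is not simply a corollary of~\cref{th:sos-variational-problem} — is keeping the polynomial approximants inside $\widehat\Phi$: unlike $\Phi$, this set is defined by a constraint ($\int_\Omega\diver_x\varphi(x,u(x))\,\dx=0$ for all admissible $u$) that a Weierstrass approximant of $\varphi^k$ generally violates, and a $\varphi$ genuinely singular on $\partial\Omega$ (such as the optimal $\varphi^*$ in~\cref{ex:simple-variational-problem}) cannot be uniformly approximated by polynomials at all. Restricting the maximizing sequence to nonsingular $\varphi^k\in\Phi\cap\widehat\Phi$ is exactly what makes both the approximation and the corrector $\psi_k$ available, and the place I would be most careful is verifying that $\psi_k$ — chosen $y$-independent, so that it perturbs only $\diver_x\varphi$ — can be absorbed into $h^k$ without disturbing the bulk polynomial or the feasibility of the SOS certificate, and that $\epsilon_k\to0$ uniformly in $k$ over the whole sequence. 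The rest is routine: the uniform-in-$k$ passage from pointwise positive definiteness of $H^{\mathrm{poly}}_k$ to a weighted sum-of-squares representation is precisely what Assumption~1 and the matrix Positivstellensatz deliver, and because the SOS program~\cref{eq:L-sos-boundary-free} carries no boundary constraints, no analogue of the $\Lambda_i$-treatment in the proof of~\cref{th:sos-convergence-noncompact} is needed here.
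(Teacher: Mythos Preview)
Your proof is correct and follows essentially the same outline as the paper's, but you handle the one genuinely new step differently. The paper observes that the condition $\varphi^k\in\widehat\Phi$ (under Dirichlet data) reduces to the single linear constraint $\sum_i\int_{\partial\Omega_i}c_0(x)\cdot n_i(x)\,\dbound=0$ on the $y$-free coefficient $c_0$, and then invokes a constrained Weierstrass-type result~\cite[Proposition~2]{Peet2007} to approximate $c_0$ by a polynomial $p_0$ that \emph{already} satisfies this linear functional constraint; the resulting polynomial $\varphi$ is therefore in $\widehat\Phi$ with no correction needed. You instead use ordinary Weierstrass approximation and repair the constraint afterward with the explicit corrector $\psi_k(x)=(\epsilon_k/|\Omega|)\,e_1$, which you then absorb into $h$.

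Your route is slightly more elementary---it avoids the external reference and is effectively a by-hand proof of the one-functional case of the constrained approximation lemma---while the paper's route is cleaner because no $\epsilon_k$ needs to be tracked. Both arrive at the same place: a polynomial $\varphi\in\widehat\Phi$ with bulk polynomial $m^{\mathsf T}H_k^{\mathrm{poly}}m$ positive definite on $\overline\Omega$, after which the matrix Positivstellensatz~\cite{Scherer2006} (enabled by Assumption~1) gives the weighted SOS certificate exactly as in~\cref{ss:proof-sos-convergence-noncompact}. Your first paragraph, explaining why the hypothesized $\Lpdr$-maximizing sequence with $\varphi^k\in\Phi\cap\widehat\Phi$ is also maximizing for $\HLpdr$ (because the boundary contributions are bounded above by zero and hence drop out), is a point the paper leaves implicit.
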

\begin{proof}
	Assumption 2 implies $\vphi^k(x,y) = \sum_{\alpha} c_\alpha(x) y^\alpha$ for some coefficients $c_\alpha \in C^1(\overline{\Omega}; \mathbb{R}^n)$ with $\sum_{i=1}^s \int_{\partial\Omega_i}c_0(x) \cdot n_i(x) \dbound=0$. The latter sum is a continuous linear functional on $C^1(\overline{\Omega}; \mathbb{R}^n)$, so an extension~\cite[Proposition 2]{Peet2007} of Weierstrass's approximation theorem lets the $c_\alpha$ be approximated by polynomials $p_\alpha$ satisfying $\sum_{i=1}^s \int_{\partial\Omega_i}p_0(x) \cdot n_i(x) \dbound=0$. This gives a polynomial $\vphi \in \widehat\Phi$ approximating $\vphi^k$, after which the proof can be completed using the argument in \cref{ss:proof-sos-convergence-noncompact}.
\end{proof}

\subsection{Optimization subject to parametrized integral inequalities}
\label{ss:sos-int-ineq}

Recall that the optimization problem~\cref{eq:integral-inequality-setup} defining $\cB^*$ can be relaxed into $\Bpdr$ or $\HBpdr$ by replacing the integral inequality $\cF^*(\lambda)\ge0$ with the stronger constraints $\Lpdr(\lambda)\ge0$ or $\HLpdr(\lambda)\ge0$. Strengthening these constraints further by requiring nonnegativity of the SOS relaxations $\Lsos(\lambda)$ or $\HLsos(\lambda)$ defined in \cref{eq:vp-SOS,eq:L-sos-boundary-free} gives
\begin{subequations}
	\begin{align}
	\label{eq:int-ineq-SOS}
	\Bpdr &\ge \Bsos :=
	\sup_{ \substack{\lambda\in\R^\tau,\,\eta \in \R_+\\\vphi \in \Phi \cap \R^n[x,y]_\nu\\h, q_i \in \R[x]_\nu} }
	b(\lambda) \quad \text{s.t.} \quad
	\begin{array}[t]{l}
	f + \mathcal{D}\varphi - \eta a - h \in \mathcal{Q}_\nu(\Gamma), \\
	\varphi\cdot \nabla g_i - q_i\abs{\nabla g_i}^2 \in \mathcal{Q}_\nu(\Lambda_i), \\ 
	L(h,q_1,\ldots,q_s) \geq 0,
	\end{array}
	\\[2ex]
	\label{eq:int-ineq-SOS-boundary-free}
	\HBpdr &\ge \HBsos :=
	\sup_{ \substack{\lambda \in \R^\tau,~\eta \in \R_+,\\ \vphi \in \widehat\Phi \cap \R^n[x,y]_\nu, \\ h \in \R[x]_\nu}} b(\lambda) \quad \text{s.t.} \quad
	\begin{array}[t]{l}
	f + \mathcal{D}\varphi - \eta a - h \in \mathcal{Q}_\nu(\Gamma), \\
	L(h) \geq 0.
	\end{array}
	\end{align}
\end{subequations}
Here, $L(h,q_1,\ldots,q_s)$ is a known linear function of the tunable coefficients of $h$ and the $q_i$, as described after~\cref{eq:L-objective}, and $L(h)$ simply omits the $q_i$. Problems \cref{eq:int-ineq-SOS,eq:int-ineq-SOS-boundary-free} are SOS programs if the objective $b(\lambda)$ is linear, and they can be converted to SOS programs if $b(\lambda)$ is a concave quadratic function. These cases, therefore, can be implemented using existing software for SOS programming. For more general concave $b(\lambda)$, problems \cref{eq:int-ineq-SOS,eq:int-ineq-SOS-boundary-free} can be converted into convex problems with linear inequalities and semidefinite constraints, which may be solved with standard interior-point algorithms if the gradient and Hessian of $b$ can be computed efficiently.

The assumptions needed to prove $\Lsos\nearrow\Lpdr$ in \cref{th:sos-variational-problem,th:sos-convergence-noncompact} can be strengthened slightly to conclude that $\Bsos\nearrow\Bpdr$  in at least two cases. The first case requires the assumptions of \cref{th:sos-variational-problem}, which cannot hold unless $(u,\nabla u)$ values are constrained to lie in compact sets, along with a particular ``strict feasibility'' condition. This is made precise by \cref{th:sos-integral-inequality} below. The second case requires the assumptions of \cref{th:sos-convergence-noncompact} along with an assumption about either strict feasibility or the form of the optimizer. This is made precise by \cref{th:sos-convergence-noncompact-int-ineq}.

\begin{theorem}
\label{th:sos-integral-inequality}
Suppose the assumptions of~\cref{th:sos-variational-problem} hold.
Then, $\Bsos\nearrow\Bpdr$ as $\nu\to\infty$
if there exist $(\lambda, \eta, \vphi, h, \ell_1, \ldots, \ell_s)$ feasible for~\cref{eq:Bpdr} and $\gamma>0$ such that
\begin{equation}
\label{eq:strict-feasibility-condition}
\int_{\Omega} h(x) \, \dx + \sum_{i=1}^s \int_{\partial\Omega_i} \ell_i \,\dbound \geq \gamma.
\end{equation}
\end{theorem}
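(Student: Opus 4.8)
The plan is to combine the convergence result for the variational subproblem, \cref{th:sos-variational-problem}, with a perturbation argument that exploits the strict feasibility condition~\cref{eq:strict-feasibility-condition}. First I would fix the strictly feasible point $(\lambda,\eta,\vphi,h,\ell_1,\ldots,\ell_s)$ for~\cref{eq:Bpdr} supplied by the hypothesis, together with the gap $\gamma>0$. This point is feasible for the PDR problem~\cref{eq:pdr-2} defining $\Lpdr(\lambda)$, so it certifies $\Lpdr(\lambda)\ge\gamma>0$. Taking any maximizing sequence $\lambda^j\to\lambda^*$ for $\Bpdr$ (with $b(\lambda^j)\nearrow\Bpdr$ and $\Lpdr(\lambda^j)\ge0$), I would form the convex combinations $\lambda_t^j := (1-t)\lambda^j + t\lambda$ for small $t\in(0,1)$. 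Because the integrand $f$ depends affinely on $\lambda$, the PDR value $\Lpdr(\cdot)$ is concave in $\lambda$: given feasible tuples for $\lambda^j$ and for $\lambda$, their convex combination is feasible for $\lambda_t^j$ with objective the corresponding convex combination of the two objectives. Hence $\Lpdr(\lambda_t^j)\ge (1-t)\cdot 0 + t\gamma = t\gamma>0$, so $\lambda_t^j$ is \emph{strictly} feasible for the PDR at margin $t\gamma$, and by concavity of $b$ we also get $b(\lambda_t^j)\ge (1-t)b(\lambda^j) + t\,b(\lambda) \to (1-t)\Bpdr + t\,b(\lambda)$.

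The next step is to approximate, for each fixed $t$ and $j$, the strictly feasible PDR certificate for $\lambda_t^j$ by an SOS certificate. The certificate for $\Lpdr(\lambda_t^j)\ge t\gamma$ consists of $(\eta_t^j,\vphi_t^j,h_t^j,\ell_{i,t}^j)$ satisfying the pointwise inequalities~\cref{eq:relaxed-inequalities-all} with $\int_\Omega h_t^j\,\dx + \sum_i\int_{\partial\Omega_i}\ell_{i,t}^j\,\dbound \ge t\gamma$. Here I would invoke exactly the machinery in the proof of \cref{th:sos-variational-problem}: since the assumptions of that theorem hold, $\Gamma$ and $\Lambda_i$ are compact (indeed their archimedean quadratic modules contain a ball constraint), so one first perturbs $h_t^j$ downward and $\ell_{i,t}^j$ downward by a tiny $\eps>0$ to make the inequalities strict, uses the Weierstrass theorem to replace possibly non-polynomial $\vphi_t^j\in\Phi$ by a nearby polynomial $\vphi$ (adjusting $h,\ell_i$ to restore the inequalities), and then Putinar's Positivstellensatz yields membership in $\cQ_\nu(\Gamma)$ and $\cQ_\nu(\Lambda_i)$ for $\nu$ large. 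Choosing $\eps$ and the approximation errors small relative to $t\gamma$, one keeps $L(h,q_1,\ldots,q_s)\ge 0$, so the resulting tuple $(\lambda_t^j,\eta,\vphi,h,q_i)$ is feasible for the SOS program~\cref{eq:int-ineq-SOS} at some finite degree, giving $\Bsos \ge b(\lambda_t^j)$ for all large $\nu$.

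Finally I would take limits in the right order. For fixed $t$, letting $j\to\infty$ gives $\liminf_{\nu\to\infty}\Bsos \ge (1-t)\Bpdr + t\,b(\lambda)$; then letting $t\downarrow 0$ gives $\liminf_{\nu\to\infty}\Bsos \ge \Bpdr$. Combined with the trivial inequality $\Bsos\le\Bpdr$ (strengthening constraints) and the monotonicity of $\Bsos$ in $\nu$ (the quadratic-module and polynomial spaces grow with $\nu$), this yields $\Bsos\nearrow\Bpdr$. The main obstacle is the uniformity of the degree $\nu$ needed in the second step: the Positivstellensatz degree depends on the ``amount of strictness'' of the inequalities, which here is controlled by $t\gamma$ and shrinks as $t\downarrow 0$. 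This is precisely why the limits must be taken as $\nu\to\infty$ \emph{first} (for each fixed $t$ and $j$) and only afterwards $j\to\infty$ and $t\downarrow 0$; no single degree works for all $t$, but the iterated-limit (equivalently, diagonal) argument is all that is required for the claimed convergence. A secondary technical point is checking that $\Lpdr(\cdot)$ is genuinely concave and that convex combinations of admissible $\vphi$ remain admissible (i.e.\ $\Phi$ is a convex cone and integrability is preserved), both of which follow directly from the affine dependence of $f$ on $\lambda$ and the linearity of $\cD\vphi$ in $\vphi$.
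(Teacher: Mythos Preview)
Your proposal is correct and follows essentially the same approach as the paper: form a convex combination of a near-optimal feasible tuple for~\cref{eq:Bpdr} with the strictly feasible tuple from the hypothesis to create positive slack in the integral constraint, then use polynomial approximation and Putinar's Positivstellensatz (via the machinery of \cref{th:sos-variational-problem}) to certify feasibility for~\cref{eq:int-ineq-SOS} at sufficiently large $\nu$. The only cosmetic difference is that the paper fixes a single $\varepsilon$-suboptimal point and chooses the mixing weight $\delta = \varepsilon\,|b(\lambda_0)-\Bpdr+\varepsilon|^{-1}$ explicitly so that $b(\lambda^\delta)\ge\Bpdr-2\varepsilon$ holds in one shot, whereas you take a maximizing sequence $\lambda^j$ together with a free parameter $t$ and pass to iterated limits $\nu\to\infty$, $j\to\infty$, $t\downarrow 0$; the content is identical.
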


\begin{theorem}
\label{th:sos-convergence-noncompact-int-ineq}
Suppose that $u$ vanishes on all boundary components $\partial\Omega_i$. Then, $\Bsos\nearrow\Bpdr$ as $\nu \to \infty$ if the assumptions of \cref{th:sos-convergence-noncompact} are satisfied and at least one of the following two conditions holds:
\begin{enumerate}[leftmargin=5ex,labelwidth=1ex]
\item[\rm 3a.] The maximizing sequence satisfying assumption {\rm 2} of \cref{th:sos-convergence-noncompact} has all $\ell_i^k = 0$ and has $\vphi^k(x,y)$ depending polynomially on $y$ with $\vphi^k(x,0)=0$.
\item[\rm 3b.] There exist $(\lambda^0,\eta^0,\varphi^0,h^0,\ell_{1}^0,\ldots,\ell_{s}^0)$ feasible for~\cref{eq:Bpdr} and $\gamma > 0$ such that $\varphi^0(x,y)$ depends polynomially on $y$ and
\beq \int_\Omega h^0(x) \,\dx +\sum_{i=1}^s \int_{\partial\Omega_i} \ell_i^0(x) \,\dbound \geq \gamma.
\eeq
\end{enumerate}
\end{theorem}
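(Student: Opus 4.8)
The plan is to graft the Slater‑type argument behind \cref{th:sos-integral-inequality} onto the polynomial‑approximation‑plus‑Positivstellensatz scheme used to prove \cref{th:sos-convergence-noncompact}. As usual it suffices to show that for every $\epsilon>0$ there are a degree $\nu$ and polynomial data $(\lambda,\eta,\varphi,h,q_1,\dots,q_s)$ feasible for the SOS program~\cref{eq:int-ineq-SOS} with $b(\lambda)>\Bpdr-\epsilon$. Fix $\epsilon$ and choose $\lambda^\star$ with $b(\lambda^\star)>\Bpdr-\tfrac{\epsilon}{2}$ and $\Lpdr(\lambda^\star)\ge0$. By the hypothesis, \cref{th:sos-convergence-noncompact} applies at $\lambda^\star$, so there is a maximizing sequence for~\cref{eq:pdr-2} at $\lambda^\star$ whose members $(\eta^k,\varphi^k,h^k,\ell_1^k,\dots,\ell_s^k)$ have $\varphi^k$ polynomial in $y$ and satisfy the quadratic identity~\cref{e:matrix-representation-assumption} with a matrix $H_k\succ0$ on $\Omega$, while $\int_\Omega h^k+\sum_i\int_{\partial\Omega_i}\ell_i^k\to\Lpdr(\lambda^\star)$.

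The single new difficulty relative to \cref{th:sos-convergence-noncompact} is the extra constraint $L(h,q_1,\dots,q_s)\ge0$ in~\cref{eq:int-ineq-SOS}, and conditions~3a and~3b give two routes to it. Under~3a, since $\ell_i^k=0$ and $\varphi^k(x,0)=0$, the polynomial $\varphi^k(x,y)\cdot\nabla g_i(x)$ vanishes on $\Lambda_i=\partial\Omega_i\times\{0\}$ and lies in the ideal generated by the homogeneous Dirichlet datum $d_i$, hence belongs to $\mathcal{Q}_\nu(\Lambda_i)$ for large $\nu$ with the choice $q_i=0$; consequently $L(h^k,0,\dots,0)=\int_\Omega h^k\to\Lpdr(\lambda^\star)\ge0$, so either this is strictly positive for large $k$, or, when $\Lpdr(\lambda^\star)=0$, one replaces $\lambda^\star$ by a nearby $\lambda'$ with $\Lpdr(\lambda')>0$ — available because $\lambda\mapsto\Lpdr(\lambda)$ and $b$ are concave, so one may approach the $b$‑optimal $\lambda^\star$ from the relative interior of the convex set $\{\lambda:\Lpdr(\lambda)\ge0\}$ — or, in eigenvalue‑type problems, normalizes the sequence so that $h^k\equiv0$. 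Under~3b, I convex‑combine the full data $(\lambda^\star,\eta^k,\varphi^k,h^k,\ell_i^k)$ with the strictly feasible point $(\lambda^0,\eta^0,\varphi^0,h^0,\ell_i^0)$ using a small weight $t\in(0,1)$: convexity of the feasible set of~\cref{eq:Bpdr} keeps the combination feasible, its objective is at least $t\gamma$ minus a term that is $o(1)$ in $k$ and hence stays positive, concavity of $b$ keeps $b$ above $\Bpdr-\epsilon$ for small $t$, the combined $\varphi$ is still polynomial in $y$ (as $\varphi^0$ is), and affinity of $f$ in $\lambda$ keeps the bulk expression representable as in~\cref{e:matrix-representation-assumption} with a matrix that is still positive definite on $\Omega$. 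In either case a uniform downward shift $h\mapsto h-\delta$, $\ell_i\mapsto\ell_i-\delta$ with $\delta$ small makes the pointwise inequalities~\cref{eq:relaxed-inequalities-all} strict while keeping $L>0$.

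It remains to approximate by polynomials and apply the Positivstellensätze, exactly as in \cref{ss:proof-sos-convergence-noncompact}: since $\varphi$ is polynomial in $y$ and $\Omega$ is compact, Weierstrass's theorem approximates the $x$‑dependence of $\varphi$ (keeping $\varphi\in\Phi$) and the functions $h,\ell_i$ by polynomials, finely enough to preserve the strict inequalities and the sign of $L$; then Putinar's Positivstellensatz~\cite{Putinar} yields the boundary weighted‑SOS memberships and the matrix Positivstellensatz of~\cite{Scherer2006}, applied to the positive‑definite matrix in~\cref{e:matrix-representation-assumption}, yields the bulk weighted‑SOS membership, both for $\nu$ large. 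The resulting data are feasible for~\cref{eq:int-ineq-SOS} with $b(\lambda)>\Bpdr-\epsilon$, giving $\Bsos\nearrow\Bpdr$. I expect the genuine obstacle to sit in case~3a when $\Lpdr(\lambda^\star)=0$: enforcing $\int_\Omega h\ge0$ without destroying the strict positive definiteness of $H_k$, since a naïve upward shift of $h$ by a constant spoils~\cref{e:matrix-representation-assumption} at points where the representing vector $m(y,z)$ vanishes, so the $\lambda$‑adjustment above must be done carefully, and verifying that it is always available under the stated hypotheses is the crux.
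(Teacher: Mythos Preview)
Your treatment of case~3b is correct and matches the paper: take a convex combination of a near-optimal point with the strictly feasible point, then approximate and apply the matrix Positivstellensatz exactly as in the proof of \cref{th:sos-convergence-noncompact}.

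Case~3a, however, has a genuine gap, and you have identified it yourself. Your framing fixes a near-optimal $\lambda^\star$ first and then invokes a maximizing sequence for $\Lpdr(\lambda^\star)$; this gives $\int_\Omega h^k \to \Lpdr(\lambda^\star)\ge 0$, but the individual $\int_\Omega h^k$ may well be strictly negative (they approach the supremum from below). Your proposed fix of sliding $\lambda^\star$ into the relative interior of $\{\lambda:\Lpdr(\lambda)\ge 0\}$ is not justified by the hypotheses: nothing guarantees that set has nonempty relative interior, and even if it does, nothing guarantees that the structural assumption~2 of \cref{th:sos-convergence-noncompact} (polynomial-in-$y$ $\varphi^k$ with the positive-definite matrix representation) holds at the perturbed $\lambda'$. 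The hypotheses give you one specific maximizing sequence, not a family indexed by $\lambda$.

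The paper resolves this differently. First, read the maximizing sequence in assumption~2 as a maximizing sequence for the $\Bpdr$ problem~\cref{eq:Bpdr} itself, i.e., tuples $(\lambda^k,\eta^k,\varphi^k,h^k,\ell_i^k)$ that are feasible for~\cref{eq:Bpdr} with $b(\lambda^k)\to\Bpdr$. Feasibility already forces $\int_\Omega h^k + \sum_i\int_{\partial\Omega_i}\ell_i^k \ge 0$, and under~3a this is simply $\int_\Omega h^k \ge 0$ for every $k$. Second, when approximating $h^k$ by a polynomial $h$, do not use plain Weierstrass approximation (which could push the integral below zero) but the constrained version of Peet~\cite{Peet2007} used in the proof of \cref{cor:L-sos-conv}: it produces a polynomial $h$ uniformly close to $h^k$ that additionally satisfies $\int_\Omega h\,\dx = \int_\Omega h^k\,\dx \ge 0$ exactly. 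With $\varphi^k(x,0)=0$ the polynomial-in-$y$ approximation of $\varphi^k$ also vanishes at $y=0$, so $q_i=0$ trivially certifies the boundary SOS memberships, and $L(h,0,\ldots,0)=\int_\Omega h\,\dx\ge 0$ holds without any slack. This removes the obstacle you flagged without any $\lambda$-adjustment.
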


The proof of \cref{th:sos-integral-inequality} is given in~\cref{ss:proof-sos-integral-inequality}. It is a standard argument, similar to the proof of \cref{th:sos-variational-problem}, where one takes convex combinations of near-feasible points with the strictly feasible point satisfying~\cref{eq:strict-feasibility-condition}, then approximates these combinations with polynomials and applies Putinar's Positivstellensatz~\cite{Putinar}. The proof of \cref{th:sos-convergence-noncompact-int-ineq} is analogous but uses the approximation arguments and Positivstellens\"atze from the proof of \cref{th:sos-convergence-noncompact} rather than \cref{th:sos-variational-problem}. We omit the proof of \cref{th:sos-convergence-noncompact-int-ineq} for brevity but note its one complication beyond the proof of \cref{th:sos-convergence-noncompact}: under assumption 3a, one must approximate non-polynomial $h^k$ with polynomial $h$ satisfying $\int_{\Omega} h(x) \dx = \int_{\Omega} h^k(x) \dx$ by invoking the same extension of Weierstrass's approximation theorem used to prove \cref{cor:L-sos-conv}. This same approximation argument can be combined with \cref{cor:L-sos-conv} to obtain a convergence guarantee $\HBsos\nearrow\HBpdr$ for the boundary-free relaxations, as stated in the following corollary. We observe convergence of $\HBsos$ to $\HBpdr$ (in fact, to $\cB^*$) in all computational examples of \cref{sec:computational}, even when the assumptions of the corollary are not satisfied.

\begin{corollary}
\label{cor:B-sos-conv}
Under the assumptions of \cref{cor:L-sos-conv}, $\HBsos\nearrow\HBpdr$.
\end{corollary}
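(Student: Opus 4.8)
The plan is to reuse the argument behind \cref{th:sos-convergence-noncompact-int-ineq}, specialized to its condition~3a---which costs nothing here, since the boundary-free relaxation~\cref{eq:Bpdr-hat} involves neither boundary functions $\ell_i$ nor constraint sets $\Lambda_i$---while handling membership in $\widehat\Phi$ by the functional-constrained polynomial approximation already employed to prove \cref{cor:L-sos-conv}. First I would dispatch the monotonicity bookkeeping: raising $\nu$ enlarges the polynomial spaces and the module $\mathcal{Q}_\nu(\Gamma)$ in~\cref{eq:int-ineq-SOS-boundary-free}, so $\HBsos$ is nondecreasing in $\nu$, and $\HBsos\le\HBpdr$ for every $\nu$ because $\HLsos(\lambda)\le\HLpdr(\lambda)$ makes any $\lambda$ feasible for $\HBsos$ feasible for $\HBpdr$. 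Thus $\lim_{\nu\to\infty}\HBsos$ exists and is $\le\HBpdr$, and it remains to produce, for each $\varepsilon>0$, a degree $\nu$ and a point feasible for~\cref{eq:int-ineq-SOS-boundary-free} whose objective exceeds $\HBpdr-\varepsilon$ (or any prescribed number if $\HBpdr=+\infty$).

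Next I would fix $\varepsilon>0$ and take a point $(\lambda,\eta,\varphi,h)$ from a maximizing sequence for~\cref{eq:Bpdr-hat} with $b(\lambda)>\HBpdr-\varepsilon$. Reading condition~2 of \cref{cor:L-sos-conv} in this parametrized setting, the point may be chosen with $\varphi\in\Phi\cap\widehat\Phi$ polynomial in $y$ and with an exact matrix representation $f(x,y,z,\lambda)+\cD\varphi(x,y,z)-\eta\,a(x,y,z)-h(x)=m(y,z)^{\mathsf{T}}H(x)m(y,z)$ on $\Gamma$, where $m$ is a fixed polynomial vector and $H$ is continuous and positive definite on $\overline\Omega$ (this mild strengthening of the bare ``$H\succ0$ on $\Omega$'' is available because the maximizing sequence can be taken with each representing matrix positive definite up to the boundary, cf.\ \cref{ex:simple-variational-problem}); the decisive point is that feasibility for~\cref{eq:Bpdr-hat} forces $\int_\Omega h\,\dx\ge0$, unlike a maximizing sequence for $\HLpdr(\lambda)$ at fixed $\lambda$, whose $h$ may have negative integral when the optimum is approached from below. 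I would then write $\varphi(x,y)=\sum_\alpha c_\alpha(x)y^\alpha$ with $c_\alpha\in C^1(\overline\Omega;\R^n)$, note that since $u$ vanishes on $\partial\Omega$ membership $\varphi\in\widehat\Phi$ reduces to the single linear condition $\sum_{i=1}^s\int_{\partial\Omega_i}c_0(x)\cdot n_i(x)\,\dbound=0$, and invoke the extension of Weierstrass's theorem in~\cite[Proposition~2]{Peet2007} to approximate each $c_\alpha$ in $C^1$ by a polynomial $p_\alpha$ respecting $\sum_i\int_{\partial\Omega_i}p_0\cdot n_i\,\dbound=0$ while simultaneously approximating $h$ uniformly by a polynomial $\widehat h$ with $\int_\Omega\widehat h\,\dx=\int_\Omega h\,\dx\ge0$. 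With $\widehat\varphi=\sum_\alpha p_\alpha y^\alpha\in\widehat\Phi$, the unchanged monomial structure in $(y,z)$ makes the matrix identity persist with $(\widehat\varphi,\widehat h)$ and a polynomial matrix $\widehat H$ uniformly close to $H$ on $\overline\Omega$, hence still positive definite there; using condition~1 of \cref{cor:L-sos-conv} for the Archimedean property, the matrix Positivstellensatz of~\cite{Scherer2006} yields $f(\cdot,\cdot,\cdot,\lambda)+\cD\widehat\varphi-\eta\,a-\widehat h\in\mathcal{Q}_\nu(\Gamma)$ for all large $\nu$. Then $(\lambda,\eta,\widehat\varphi,\widehat h)$ is feasible for~\cref{eq:int-ineq-SOS-boundary-free} at such $\nu$ with objective $b(\lambda)>\HBpdr-\varepsilon$; letting $\varepsilon\to0$ and using monotonicity gives $\HBsos\nearrow\HBpdr$.

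The main obstacle, inherited from \cref{cor:L-sos-conv,th:sos-convergence-noncompact-int-ineq}, is that an off-the-shelf polynomial approximation of $\varphi$ need not lie in $\widehat\Phi$ and one of $h$ can violate $\int_\Omega h\,\dx\ge0$; both are removed at once by the functional-constrained approximation of~\cite{Peet2007}, since only finitely many continuous linear functionals need be preserved. Two lesser points require care: transferring the single-problem hypotheses of \cref{cor:L-sos-conv} to a near-optimizer of the $\lambda$-problem~\cref{eq:Bpdr-hat} (done exactly as in the proof of \cref{th:sos-convergence-noncompact-int-ineq}), and keeping the polynomial matrix $\widehat H$ positive definite on a compact set so that~\cite{Scherer2006} applies, which is why one works along a maximizing sequence whose representing matrices are positive definite up to $\partial\Omega$ rather than only on the interior, as in \cref{ex:simple-variational-problem}.
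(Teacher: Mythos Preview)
Your proposal is correct and follows essentially the same route the paper indicates: combine the functional-constrained polynomial approximation of~\cite{Peet2007} (to keep $\widehat\varphi\in\widehat\Phi$ and to preserve $\int_\Omega \widehat h\,\dx=\int_\Omega h\,\dx\ge0$) with the matrix Positivstellensatz argument from the proof of \cref{th:sos-convergence-noncompact}, exactly as the paper's one-sentence justification suggests. Your identification of condition~3a of \cref{th:sos-convergence-noncompact-int-ineq} as the relevant template, and of the integral-preserving approximation of $h$ as the extra ingredient beyond \cref{cor:L-sos-conv}, matches the paper's own remarks preceding the corollary.

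Two small points worth tightening, though neither is a gap in approach. First, your parenthetical about taking $H\succ0$ on $\overline\Omega$ rather than $\Omega$ slightly strengthens the stated hypothesis; the paper's appendix proof of \cref{th:sos-convergence-noncompact} works with $H_k\succ0$ on $\Omega$ and a uniform lower bound $H_k\succeq\delta_k I$, so you can simply cite that argument rather than modify the assumption. Second, the claim that ``the matrix identity persists with $(\widehat\varphi,\widehat h)$ and a polynomial matrix $\widehat H$ uniformly close to $H$'' skips the correction-matrix step: since $m(y,z)$ may satisfy algebraic relations, one must split $H_k=F_k+G_k$ with $m^\mathsf{T}G_k m=0$, approximate $F_k$ via the polynomial $\widehat\varphi,\widehat h$ and $G_k$ separately by a polynomial $G$ with $m^\mathsf{T}Gm=0$, and then apply~\cite{Scherer2006} to the sum---exactly as in \cref{ss:proof-sos-convergence-noncompact}.
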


\section{\label{sec: sharp}Sharpness of relaxations for three classes of problems}

This section proves that the PDR method is sharp for three classes of problems: integral variational problems with quadratic integrand, optimization problems giving principal eigenvalues of elliptic operators, and optimization problems giving optimal constants of the Poincar\'e inequality for $L^p$ norms on one-dimensional domains with even $p$. In all cases, admissible functions $u$ are constrained to vanish at the boundaries. For each class of problem, SOS relaxations also give arbitrarily sharp bounds as their polynomial degrees are raised. These sharpness results rely on optimal $\vphi$ constructed semi-explicitly in terms of solutions to Euler--Lagrange equations.

\subsection{\label{sec:quadratic-minimization}Variational problems with quadratic integrand} Consider integral variational problems of the form
\begin{equation}\label{e:qm}
\mathcal{F}^* = \inf_{u \in W_0^{1,2}(\Omega)} \int_{\Omega} \left[ \nabla u \cdot A(x) \nabla u + b(x) u^2 - 2 c(x) u \right] \dx,
\end{equation}
where $W_0^{1,2}$ is the subspace of $W^{1,2}$ containing functions $u$ that vanish at the boundary of $\Omega$. We make the following assumptions:
\begin{enumerate}[{({A}1)}, leftmargin=*, topsep=0.75ex, itemsep=0.1ex]
	\item\label{ass:qm1} $\Omega$ is an open bounded Lipzchitz domain.
	\item\label{ass:qm2} $A:\R^n \to \mathbb{S}^n$ is smooth, where $\mathbb{S}^n$ is the space of $n\times n$ symmetric matrices.
	\item\label{ass:qm3} Uniform ellipticity: there exists $\omega>0$ such that $\omega I \preceq A(x) \preceq \omega^{-1}I$  $\forall x \in \overline{\Omega}$.
	\item\label{ass:qm4} $b:\R^n \to \mathbb{R}$ is smooth.
	\item\label{ass:qm5} $c:\R^n \to \mathbb{R}$ is smooth and strictly positive on $\overline{\Omega}$.
	\item\label{ass:qm6} $A$ and $b$ are such that
\end{enumerate}
\begin{equation}\label{e:qm:eig}
\lambda^* = \min_{\substack{u \in W_0^{1,2}(\Omega) \\ \int_\Omega u^2 \dx = 1}} \int_{\Omega} \left[\nabla u \cdot A(x) \nabla u + b(x) u^2 \right] \dx > 0.
\end{equation}
The last assumption \ref{ass:qm6} can be verified by using the PDR approach to bound $\lambda^*$ below; these bounds can be made arbitrarily sharp, as proved in the next subsection.

Under assumptions \ref{ass:qm1}--\ref{ass:qm6}, the global minimum $\mathcal{F}^*$ of \cref{e:qm} is attained by a unique minimizer $u^*$ \cite[Chapter~8]{Evans1998} that solves the elliptic Euler--Lagrange PDE
\begin{equation}\label{e:qm:EL-Dirichlet-problem}
-\nabla \cdot (A(x) \nabla u^*) + b(x) u^* = c(x) \text{ on }\Omega,
\end{equation}
subject to the Dirichlet boundary condition $u^*= 0$ on $\partial\Omega$. Provided the domain $\Omega$ is not too complicated or high-dimensional, this PDE can be solved by standard numerical methods, and then $\cF^*$ can be calculated as $- \int_{\Omega} c(x) u^*(x) \, \dx$. Alternatively, the following \cref{th:sharpness-qm} guarantees that PDRs bound $\cF^*$ below with arbitrary accuracy. The proof of this theorem exploits properties of the optimizer $u^*$ for problems of the form~\cref{e:qm} on Lipschitz domains, which are summarized in \cref{prop:qm:positive-minimizer}.

\begin{theorem}\label{th:sharpness-qm}
Suppose problem~\cref{e:qm} satisfies assumptions \ref{ass:qm1}--\ref{ass:qm6}. Then:
\begin{enumerate}[{\rm 1.}, leftmargin=*, topsep=0.5ex, itemsep=0.5ex]
\item $\Lpdr = \HLpdr = \cF^*$. In particular, there exist $h^k \in C(\overline{\Omega})$ and $F^k \in C(\overline{\Omega}; \mathbb{R}^n)$ such that, with $\vphi^k =F^k(x) y^2$, the sequence $\{(\vphi^k,h^k)\}_{k \geq 1}$ is maximizing for $\HLpdr$ and $\{(\vphi^k,h^k,0,\ldots,0)\}_{k \geq 1}$ is maximizing for $\Lpdr$.
\item If $A,b,c$ are polynomial and $\Omega$ is semialgebraic with $r^2 - \|x\|^2 \in \mathcal{Q}(\Omega)$ for some $r\in\R$, then, with $\vphi(x,y) = F(x)y^2$ and degree-$\nu$ polynomials $F(x)$ and $h(x)$, the SOS relaxations $\Lsos$ and $\HLsos$ converge to $\cF^*$ as $\nu\to\infty$.
\end{enumerate}
\end{theorem}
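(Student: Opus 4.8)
The plan is to establish the reverse of the always-valid inequalities $\cF^*\ge\Lpdr$ and $\cF^*\ge\HLpdr$ by constructing (near-)optimal dual variables from the unique minimizer $u^*$ of~\cref{e:qm}, whose existence, interior positivity, and boundary behaviour on the Lipschitz domain $\Omega$ are collected in~\cref{prop:qm:positive-minimizer}. The key candidate is $\vphi^*(x,y)=F^*(x)y^2$ with $F^*:=-A\nabla u^*/u^*$, together with $h^*:=-c\,u^*$. Since $\cD\vphi^*(x,y,z)=(\diver F^*)y^2+2y\,F^*\!\cdot z$, completing the square in $z$ and then using the Euler--Lagrange equation~\cref{e:qm:EL-Dirichlet-problem} to collapse the coefficient of $y^2$ to $c/u^*$ gives the identity
\[
f(x,y,z)+\cD\vphi^*(x,y,z)-h^*(x)
=\big(z+yA^{-1}F^*\big)^{\mathsf T}A\big(z+yA^{-1}F^*\big)+\frac{c(x)}{u^*(x)}\big(y-u^*(x)\big)^2,
\]
which is nonnegative on $\Omega\times\R\times\R^n$ by assumptions~\ref{ass:qm3} and~\ref{ass:qm5} and $u^*>0$, with equality at $(y,z)=(u^*(x),\nabla u^*(x))$. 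Testing~\cref{e:qm:EL-Dirichlet-problem} against $u^*$ shows $\int_\Omega h^*\,\dx=-\int_\Omega c\,u^*\,\dx=\cF^*$, so $(\vphi^*,h^*)$ would be exactly optimal were it admissible.

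The obstruction is that $F^*$ blows up where $u^*$ vanishes, i.e.\ on $\partial\Omega$. I would regularize by the constant shift $v^k:=u^*+1/k>0$ on $\overline\Omega$. Because constants are annihilated by $u\mapsto\nabla\cdot(A\nabla u)$, the shifted function solves $-\nabla\cdot(A\nabla v^k)+b\,v^k=c+b/k=:c^k$, still positive on $\overline\Omega$ for $k$ large by~\ref{ass:qm5}; hence $F^k:=-A\nabla v^k/v^k$, $\vphi^k:=F^k(x)y^2$, and $h^k:=-c^2 v^k/c^k$ satisfy the same completed-square identity with $(u^*,c)$ replaced by $(v^k,c^k)$ where appropriate, so $f+\cD\vphi^k-h^k\ge0$ pointwise. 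Because $\vphi^k(x,0)=0$, the boundary inequality~\cref{eq:pw-inequality-boundary} holds with $\ell_i^k\equiv0$, so $(\vphi^k,h^k,0,\dots,0)$ and $(\vphi^k,h^k)$ are feasible for~\cref{eq:pdr-2} and~\cref{eq:pdr-no-boundary} respectively, \emph{provided} $\vphi^k\in\Phi$ in the first case and $\vphi^k\in\widehat\Phi$ in the second. Verifying this admissibility---essentially that the divergence theorem holds for $x\mapsto F^k(x)u(x)^2$ with vanishing boundary term---is the main obstacle, and I expect it to be handled with the boundary decay and gradient estimates for $u^*$ on Lipschitz domains in~\cref{prop:qm:positive-minimizer} together with a Hardy inequality, possibly after mollifying $F^k$ near $\partial\Omega$ (a stable operation, since $v^k\ge1/k$ and the mollification perturbs $h^k$ only by a small continuous function). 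Granting admissibility, $|h^k|\le C(u^*+1)\in L^1(\Omega)$ uniformly in $k$, so dominated convergence gives $\int_\Omega h^k\,\dx\to\cF^*$; hence both sequences are maximizing and part~1 follows.

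For part~2 I would derive $\Lsos\nearrow\cF^*$ and $\HLsos\nearrow\cF^*$ from~\cref{th:sos-convergence-noncompact,cor:L-sos-conv}, whose hypotheses besides the assumed Archimedean condition $r^2-\|x\|^2\in\cQ(\Omega)$ require the maximizing sequence above (which has $\vphi^k$ quadratic in $y$ and $\eta^k=0$) to admit a representation $f+\cD\vphi^k-h^k=m(y,z)^{\mathsf T}H_k(x)\,m(y,z)$ for a fixed polynomial vector $m$ and a continuous $H_k$ that is positive definite on $\Omega$. Taking $m(y,z)=(1,y,z_1,\dots,z_n)^{\mathsf T}$, the completed-square identity realizes the left-hand side as a positive semidefinite quadratic form in $(1,y,z)$, but that form is rank-deficient, with kernel spanned by $\big(1,\,c v^k/c^k,\,-(c v^k/c^k)A^{-1}F^k\big)$; replacing $h^k$ by $h^k-1/k$ adds $1/k$ to the $(1,1)$-entry of $H_k$, removes this kernel, and thus makes $H_k\succ0$, while changing the objective by $-|\Omega|/k\to0$. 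The remaining points---that $H_k$ (whose $yz$-block is $F^k$) extends continuously to $\overline\Omega$, and that $\vphi^k\in\Phi\cap\widehat\Phi$ as needed for~\cref{cor:L-sos-conv}---again rest on the regularity in~\cref{prop:qm:positive-minimizer}; if that regularity is insufficient one instead approximates $u^*$ by a polynomial supersolution of the Euler--Lagrange operator, or passes to the rational-$\vphi$ form of the relaxations in~\cref{rem:rational} with a fixed nonnegative denominator. With these inputs,~\cref{th:sos-convergence-noncompact} and~\cref{cor:L-sos-conv}---which internally carry out the polynomial approximation of $\vphi^k$ and $h^k$ via the Weierstrass theorem and the Positivstellens\"atze of~\cite{Putinar,Scherer2006}---yield the stated convergence.
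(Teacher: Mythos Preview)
Your algebraic core is the same as the paper's: take $\vphi=F(x)y^2$ with $F=-A\nabla u/u$ for a regularized $u$, and verify the Schur-complement/completed-square identity. The difference is in the regularization, and yours has a genuine gap. The constant shift $v^k=u^*+1/k$ fixes positivity on $\overline\Omega$ but does nothing for the regularity of $\nabla u^*$ up to the boundary. \Cref{prop:qm:positive-minimizer} gives smoothness of $u^*$ only on the \emph{open} set $\Omega$; on a Lipschitz domain with a re-entrant corner, $\nabla u^*$ typically blows up like $r^{\alpha-1}$ with $\alpha<1$, so $F^k=-A\nabla u^*/v^k$ is not in $C(\overline\Omega;\R^n)$, let alone $C^1$, and $\vphi^k$ fails to lie in $\Phi$. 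Your fallbacks do not close this: the proposition supplies no boundary gradient estimates, a Hardy inequality controls integrability rather than continuity, mollifying $F^k$ destroys the exact PDE identity that makes the Schur complement collapse, and ``polynomial supersolution'' is not made precise.

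The paper's fix is to enlarge the domain rather than shift the function: take $u^k$ to be the minimizer of~\cref{e:qm} on the $k^{-1}$-neighbourhood $\Omega_k\supset\overline\Omega$, set $F^k=-A\nabla u^k/u^k$ and $h^k=-c\,u^k-k^{-1}$. Since $\overline\Omega$ lies in the \emph{interior} of $\Omega_k$, interior elliptic regularity makes $u^k$ smooth and (by the argument of \cref{prop:qm:positive-minimizer}) strictly positive on $\overline\Omega$, so $F^k\in C^\infty(\overline\Omega;\R^n)$ automatically and $\vphi^k\in\Phi\cap\widehat\Phi$ with no further work. The Schur complement $H_k/A$ is then strictly positive definite on $\overline\Omega$ (the $-k^{-1}$ shift in $h^k$ plays the same role as your $-1/k$), and $\int_\Omega h^k\,\dx\nearrow\cF^*$ follows from continuous dependence of the minimizer on the domain \cite{Savare2002}. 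This also delivers the hypotheses of \cref{th:sos-convergence-noncompact} and \cref{cor:L-sos-conv} directly, without any separate boundary analysis. In short, your outline is on the right track but needs the domain-enlargement regularization in place of the constant shift; once you make that substitution, the rest of your argument goes through essentially as written.
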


\begin{proposition}\label{prop:qm:positive-minimizer}
Let $\Omega$ be an open bounded Lipschitz domain. The optimizer $u^*$ for problem~\cref{e:qm} is smooth and strictly positive on $\Omega$.
\end{proposition}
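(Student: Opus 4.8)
The plan is to identify $u^*$ with the unique solution of the Euler--Lagrange equation~\cref{e:qm:EL-Dirichlet-problem} and then apply, in succession, interior elliptic regularity and the strong maximum principle, the latter exploiting the strict positivity of $c$ granted by~\ref{ass:qm5}. First I would recall why a unique minimizer exists. By~\cref{e:qm:eig} and homogeneity, the quadratic form $Q(u):=\int_\Omega[\nabla u\cdot A(x)\nabla u+b(x)u^2]\,\dx$ obeys $Q(u)\ge\lambda^*\|u\|_{L^2(\Omega)}^2$ with $\lambda^*>0$, so it is positive definite and, together with uniform ellipticity~\ref{ass:qm3}, makes the strictly convex functional in~\cref{e:qm} coercive on $W_0^{1,2}(\Omega)$; hence this functional has a unique minimizer $u^*$, which is a weak solution of~\cref{e:qm:EL-Dirichlet-problem} (see also~\cite[Chapter~8]{Evans1998}).

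Smoothness of $u^*$ on $\Omega$ is then immediate: the coefficients $A$ and $b$ are smooth by~\ref{ass:qm2} and~\ref{ass:qm4}, the right-hand side $c$ is smooth by~\ref{ass:qm5}, and the operator is uniformly elliptic, so the standard interior regularity theory for linear elliptic equations~\cite{Evans1998} gives $u^*\in H^k_{\mathrm{loc}}(\Omega)$ for every $k$ and therefore $u^*\in C^\infty(\Omega)$ by Sobolev embedding. Note that only \emph{interior} regularity is invoked, so the fact that $\Omega$ is merely Lipschitz---which would obstruct regularity up to $\partial\Omega$---plays no role here.

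For positivity I would argue in two steps. First, $u^*\ge0$: the function $|u^*|$ also belongs to $W_0^{1,2}(\Omega)$, and since $\nabla|u^*|\cdot A(x)\nabla|u^*|=\nabla u^*\cdot A(x)\nabla u^*$ and $|u^*|^2=(u^*)^2$ almost everywhere while $-2c\,|u^*|\le-2c\,u^*$ (because $c>0$), the functional in~\cref{e:qm} takes at $|u^*|$ a value no larger than at $u^*$; by uniqueness of the minimizer $u^*=|u^*|\ge0$, hence $u^*\ge0$ on all of $\Omega$ by the continuity just established. Second, $u^*>0$: writing $b=b^+-b^-$ with $b^\pm\ge0$, equation~\cref{e:qm:EL-Dirichlet-problem} becomes $-\nabla\cdot(A\nabla u^*)+b^+u^*=c+b^-u^*\ge c>0$ on $\Omega$. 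Thus $u^*$ is a nonnegative supersolution of a uniformly elliptic operator with nonnegative zeroth-order coefficient and strictly positive right-hand side, so in particular $u^*\not\equiv0$. If $u^*$ vanished at an interior point it would attain there its minimum value $0$, and the strong maximum principle~\cite{Evans1998} would force $u^*\equiv0$, contradicting the strict inequality above; since $\Omega$ is connected, $u^*>0$ throughout $\Omega$.

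I do not expect any step to be genuinely difficult; the one point that needs care is the passage to the strong maximum principle, because $b$ is not assumed to have a sign and so the principle cannot be applied to $u\mapsto-\nabla\cdot(A(x)\nabla u)+b(x)u$ directly. Moving $b^-u^*$ to the right-hand side---which is permissible precisely because $u^*\ge0$---reduces the matter to an operator with nonnegative zeroth-order coefficient, to which the classical strong maximum principle applies verbatim. Everything else is routine linear elliptic PDE theory.
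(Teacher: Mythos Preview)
Your argument is correct, and it takes a genuinely different route to positivity than the paper does. For smoothness both you and the paper simply invoke interior elliptic regularity. For positivity, however, the paper proceeds indirectly: it enlarges $\Omega$ to a smooth domain $\Omega'\supset\Omega$ on which the principal eigenvalue $\lambda'$ of~\cref{e:qm:eig} remains positive, takes the corresponding positive eigenfunction $w$, and shows that $v:=u^*/w$ satisfies an elliptic equation with strictly positive zeroth-order coefficient $\lambda'$, after which the strong maximum principle yields $v>0$ and hence $u^*=wv>0$. Your argument avoids the domain-perturbation machinery entirely by first proving $u^*\ge0$ via the comparison of the functional at $u^*$ and $|u^*|$ together with uniqueness, and then splitting $b=b^+-b^-$ so that the operator acquires a nonnegative zeroth-order coefficient and the strong maximum principle applies directly. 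Your approach is more elementary and self-contained; the paper's approach, while less direct here, has the advantage of introducing the enlarged-domain eigenfunction construction that is reused verbatim in the proof of \cref{th:sharpness-qm}, where the functions $F^k$ are built from minimizers $u^k$ on neighborhoods $\Omega_k\supset\Omega$.
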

\begin{proof}[Proof of \cref{prop:qm:positive-minimizer}]
The smoothness of $u^*$ is given by regularity theory for elliptic PDEs~\cite[\S6.3.2]{Evans1998}. For positivity of $u^*$, note first that optimal $\lambda$ and $u$ for \cref{e:qm:eig} depend continuously on the domain, as can be shown using the arguments in~\cite{Savare2002}. Fix any smooth domain $\Omega'\supset\Omega$ such that the minimum $\lambda'$ of problem~\cref{e:qm:eig} on $\Omega'$ is strictly positive. The corresponding minimizer $w$ is smooth, positive on $\Omega'$, and satisfies $-\nabla \cdot (A \nabla w) + b w = \lambda' w$~\cite[\S6.5.1]{Evans1998}.
	This equation for $w$ and equation~\cref{e:qm:EL-Dirichlet-problem} for $u^*$ imply that the smooth function $v=u^*/w$ satisfies the elliptic PDE $-\nabla \cdot (A \nabla v) -2 w^{-1} \nabla w \cdot A \nabla v + \lambda' w = w^{-1}c$ on $\Omega$.
	Since $\lambda'>0$, the strong maximum principle~\cite[\S6.4.2]{Evans1998} implies $v>0$ on $\Omega$, and thus $u^* = wv>0$ on~$\Omega$.
\end{proof}

\begin{proof}[Proof of \cref{th:sharpness-qm}]
We construct a sequence $\{(\varphi^k,h^k)\}_{k\ge 1}$ that satisfies the constraint of~\cref{eq:pdr-no-boundary} and has $\int_{\Omega}h^k(x) \dx \nearrow \cF^*$, which shows $\HLpdr = \cF^*$. In the case of polynomial data, for which $\HLsos$ is defined, our sequence also satisfies the assumptions of \cref{cor:L-sos-conv}, so that $\HLsos \nearrow \HLpdr$. The corresponding sequence $\{(\vphi^k,h^k,0,\ldots,0)\}_{k \geq 1}$ satisfies the constraints of~\cref{eq:pdr-2}, so $\Lpdr = \cF^*$, and for polynomial data it also satisfies the assumptions of \cref{th:sos-convergence-noncompact}, so $\Lsos \nearrow \Lpdr$.

	With $\vphi^k(x,y)=F^k(x)y^2$, the constraint of~\cref{eq:pdr-no-boundary} for problem~\cref{e:qm} is inequality~\cref{eq:pw-inequality-bulk}:
	\begin{equation}\label{e:qm:inequality}
	z^\mathsf{T}A(x) z + b(x)y^2 - 2c(x) y + \nabla \cdot F^k(x) y^2 + 2y F^k(x)\cdot z - h^k(x) \ge 0
	\end{equation}
	for all $(x,y,z)\in\Gamma=\Omega\times\mathbb{R}\times\mathbb{R}^n$. (The multiplier $\eta$ in \cref{eq:pw-inequality-bulk} does not enter since there is no integral constraint in~\cref{e:qm}.) This inequality can be written in the matrix form
	$m(y,z)^\top H_k(x) m(y,z) \geq 0$ with $m(y,z)=\begin{bmatrix} 1 & y & z\end{bmatrix}^\top$ and
	\begin{equation}
	H_k(x) :=
	\begin{bmatrix}
	-h^k(x) & -c(x) & 0_{1\times n} \\
	-c(x) & b(x) + \nabla \cdot F^k(x)  & F^k(x)^\top \\
	0_{n\times 1} & F^k(x) & A(x)
	\end{bmatrix}.
	\end{equation}
	To satisfy both \cref{e:qm:inequality} and assumption 2 of \cref{cor:L-sos-conv}, we seek functions $F^k$ and $h^k$ such that $H_k(x)$ is positive definite on $\overline{\Omega}$. Since $A(x)$ is positive definite on $\overline{\Omega}$ by~\ref{ass:qm3}, it suffices to check the positive definiteness of the Schur complement
	\begin{equation}\label{e:HkAmatrix}
	H_k(x)/A(x) =
	\begin{bmatrix}
	-h^k(x) & -c(x) \\
	-c(x) & b(x) + \nabla \cdot F^k(x) - F^k(x) \cdot {A^{-1}}(x) F^k(x)
	\end{bmatrix}.
	\end{equation}

Let $\Omega_k\supset\Omega$ denote the $k^{-1}$ neighborhood of $\Omega$, meaning the set of points whose distance from $\Omega$ is less than $k^{-1}$. The set $\Omega_k$ is open and Lipschitz for sufficiently large $k$ \cite{Doktor1976}. We choose
\begin{equation}\label{e:Fkformula}
F^k(x) = -\frac{A(x) \nabla u^k(x)}{u^k(x)},
\end{equation}
where $u^k$ is the minimizer of problem~\cref{e:qm} on domain $\Omega_k$. Let $k$ be large enough for $\Omega_k$ to be Lipschitz and for assumptions \ref{ass:qm1}--\ref{ass:qm6} to hold with $\Omega$ replaced by $\Omega_k$. The function $u^k$ satisfies the PDE~\cref{e:qm:EL-Dirichlet-problem} on $\Omega_k$ and is both smooth and positive on $\overline{\Omega} \subset \Omega_k$ by \cref{prop:qm:positive-minimizer}.
	With $F^k$ as in~\cref{e:Fkformula} and $u^k$ satisfying~\cref{e:qm:EL-Dirichlet-problem}, the lower right entry of~\cref{e:HkAmatrix} becomes $c(x)/u^k(x)$, which is a strictly positive continuous function on $\overline{\Omega}$ by virtue of assumption \ref{ass:qm5} and the strict positivity of $u_k$. We choose $h^k(x) = - c(x) u^k(x) - k^{-1}$, in which case the matrix $H_k/A$ is positive definite on $\overline{\Omega}$ because its determinant and trace are positive.
	
To show $\int_{\Omega} h^k(x) \dx \nearrow \cF^*$, observe that
\begin{equation}
\int_{\Omega} h^k(x) \, \dx
 = \int_{\Omega} \left[- c(x) u^k(x) - k^{-1}\right] \dx
 \geq  \int_{\Omega_k} \left[- c(x) u^k(x) - k^{-1}\right] \dx
\end{equation}
since $cu^k\geq 0$ on $\Omega_k$. The right-hand integral is the minimum of~\cref{e:qm} on the domain $\Omega_k$, by the definition of $u^k$. This integral converges to $\cF^*$ as $k \to \infty$ because minimizers of~\cref{e:qm} depend continuously on the domain, as follows since minimizers solve elliptic Euler--Lagrange equations with continuous domain dependence \cite{Savare2002}.
\end{proof}

\subsection{\label{ss:elliptic-eigenvalues}Principal eigenvalues of elliptic operators}
Consider the optimization
\begin{equation}\label{e:princ-eig}
\lambda^* := \sup_{\lambda \in \mathbb{R}} \lambda \quad\text{s.t.}\quad \inf_{u \in W^{1,2}_0}\int_{\Omega} \left(\nabla u \cdot A(x) \nabla u + [b(x)- \lambda c(x)] u^2 \right) \dx \geq 0,
\end{equation}
where a parametrized variational integral appears in the constraint. Let $\Omega$, $A$, $b$, and $c$ satisfy the same assumptions \ref{ass:qm1}--\ref{ass:qm5} made in \cref{sec:quadratic-minimization}. The optimal value $\lambda^*$ is the principal eigenvalue of the elliptic eigenvalue problem
\begin{equation}\label{e:elliptic-eig-problem}
-\nabla \cdot (A(x) \nabla u) + b(x) u = \lambda\, c(x) u \quad \text{ on } \Omega,
\end{equation}
subject to the Dirichlet boundary condition $u=0$ on $\partial\Omega$. When $\Omega$ is a simple low-dimensional domain, $\lambda^*$ can be approximated numerically using standard discretization techniques for PDEs and eigenvalue computations. The following theorem guarantees that PDRs approximate $\lambda^*$ from below with arbitrary accuracy.

\begin{theorem}\label{th:elliptic-eigs}
	Suppose problem \cref{e:princ-eig} satisfies assumptions \ref{ass:qm1}--\ref{ass:qm5} in \cref{sec:quadratic-minimization}. Then:
\begin{enumerate}[{\rm 1.}, leftmargin=*, topsep=0.5ex, itemsep=0.5ex]
\item $\Bpdr = \HBpdr = \lambda^*$. In particular, there exist $\lambda^k \in \mathbb{R}$ and $F^k \in C(\overline{\Omega}; \mathbb{R}^n)$ such that, with $\vphi^k =F^k(x) y^2$, the sequence $\{(\lambda^k,\vphi^k,0)\}_{k \geq 1}$ is maximizing for $\HBpdr$ and $\{(\lambda^k,\vphi^k,0,0,\ldots,0)\}_{k \geq 1}$ is maximizing for $\Bpdr$.
\item If $A,b,c$ are polynomial and $\Omega$ is semialgebraic with $r^2 - \|x\|^2 \in \mathcal{Q}(\Omega)$ for some $r\in\R$, then, with $h=0$ and $\vphi(x,y) = F(x)y^2$ for degree-$\nu$ polynomial $F(x)$, the SOS relaxations $\Bsos$ and $\HBsos$ converge to $\lambda^*$ as $\nu\to\infty$.
\end{enumerate}
\end{theorem}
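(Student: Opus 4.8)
The plan is to follow the proof of \cref{th:sharpness-qm}: exhibit an explicit maximizing sequence for $\HBpdr$ and $\Bpdr$ built from principal eigenfunctions of \cref{e:elliptic-eig-problem} on slightly enlarged domains, and then check that this sequence meets the hypotheses of \cref{th:sos-convergence-noncompact-int-ineq} and \cref{cor:B-sos-conv} needed for part~2. For large $k$, let $\Omega_k \supset \Omega$ be the $k^{-1}$-neighbourhood of $\Omega$, which is open and Lipschitz for large $k$ \cite{Doktor1976} and on which assumptions \ref{ass:qm1}--\ref{ass:qm5} still hold by continuity of $A$ and $c$. Let $\mu_k$ and $u^k$ be the principal eigenvalue and eigenfunction of \cref{e:elliptic-eig-problem} on $\Omega_k$ with zero Dirichlet data; $u^k$ is smooth and strictly positive on $\Omega_k \supset \overline\Omega$ by standard elliptic theory (cf.\ \cref{prop:qm:positive-minimizer}). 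Set $F^k(x) := -A(x)\nabla u^k(x)/u^k(x)$, which is smooth on $\overline\Omega$ since $u^k$ is bounded away from zero there, and put $\vphi^k(x,y) := F^k(x)y^2$. Because admissible $u$ vanish on $\partial\Omega$, the boundary integral of $(F^k\cdot n)u^2$ is zero, so $\vphi^k \in \Phi \cap \widehat\Phi$.

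For problem \cref{e:princ-eig}, inequality \cref{eq:pw-inequality-bulk} with $h=0$ reads $z^\mathsf{T} A z + 2y\,F^k\cdot z + (b - \lambda c + \nabla\cdot F^k)y^2 \ge 0$ on $\Omega \times \R \times \R^n$, which (since $A \succ 0$) is equivalent to nonnegativity on $\Omega$ of the Schur complement $b - \lambda c + \nabla\cdot F^k - (F^k)^\mathsf{T} A^{-1} F^k$. Substituting $F^k = -A\nabla u^k/u^k$ and using $-\nabla\cdot(A\nabla u^k) + b u^k = \mu_k c u^k$ on $\Omega_k \supset \Omega$, this Schur complement simplifies to $(\mu_k - \lambda)c(x)$, which is nonnegative on $\overline\Omega$ exactly when $\lambda \le \mu_k$ (as $c>0$). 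Hence $(\mu_k,\vphi^k,0)$ is feasible for $\HBpdr$ and $(\mu_k,\vphi^k,0,0,\dots,0)$ for $\Bpdr$: the boundary inequality \cref{eq:pw-inequality-boundary} holds with $\ell_i=0$ because $\vphi^k(x,0)=0$, and $\int_\Omega 0\,\dx + \sum_i \int_{\partial\Omega_i} 0\,\dbound = 0 \ge 0$. So $\HBpdr,\Bpdr \ge \mu_k$; since the principal Dirichlet eigenvalue depends continuously and monotonically on the domain (the same continuity invoked in the proof of \cref{prop:qm:positive-minimizer}, see \cite{Savare2002}), $\mu_k \nearrow \lambda^*$, whence $\HBpdr,\Bpdr \ge \lambda^*$. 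The reverse bounds $\lambda^* = \cB^* \ge \Bpdr$ and $\cB^* \ge \HBpdr$ come from the relaxation \cref{eq:integral-inequality-relaxation}, giving $\Bpdr = \HBpdr = \lambda^*$ together with the maximizing sequences claimed in part~1 ($\lambda^k = \mu_k$).

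For part~2, note that with the monomial vector $m(y,z) = (y,z)^\mathsf{T}$ the bulk expression equals $m^\mathsf{T} H_k(x) m$, where $H_k$ has $A(x)$ in its lower block and Schur complement $(\mu_k - \lambda)c(x)$. At $\lambda = \mu_k$ this matrix is only positive semidefinite, so to obtain the strict definiteness required by the matrix Positivstellensatz I would instead take $\lambda^k := \mu_k - k^{-1}$: the Schur complement becomes $k^{-1}c(x) > 0$ on $\overline\Omega$, so $H_k \succ 0$ there, while $\lambda^k \nearrow \lambda^*$ and feasibility for $\Bpdr$ are preserved. The sequence $\{(\lambda^k,\vphi^k,0,0,\dots,0)\}$ is then maximizing for $\Bpdr$, has $\vphi^k$ polynomial in $y$ with the matrix identity \cref{e:matrix-representation-assumption} holding with $H_k \succ 0$ (assumption~2 of \cref{th:sos-convergence-noncompact}), has all $\ell_i^k = 0$ and $\vphi^k(x,0)=0$ (condition~3a of \cref{th:sos-convergence-noncompact-int-ineq}), and the Archimedean hypothesis $r^2 - \|x\|^2 \in \cQ(\Omega)$ is assumed; hence \cref{th:sos-convergence-noncompact-int-ineq} yields $\Bsos \nearrow \lambda^*$. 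Since each $\vphi^k \in \Phi \cap \widehat\Phi$, the hypotheses of \cref{cor:L-sos-conv} hold, and \cref{cor:B-sos-conv} gives $\HBsos \nearrow \lambda^*$; the feasible points constructed in those proofs have $h=0$ and $\vphi = F(x)y^2$ with polynomial $F$, so the stated restricted relaxations converge as well.

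The surrounding algebra is short and parallels \cref{th:sharpness-qm}, and the domain-continuity of Dirichlet eigenvalues is classical and already used above, so neither is a real obstacle. The one point needing care is the tension between the prescribed choice $h=0$ — which is in fact forced, since evaluating \cref{eq:pw-inequality-bulk} at $y=z=0$ gives $-h(x) \ge 0$, incompatible with $\int_\Omega h\,\dx \ge 0$ unless $h \equiv 0$ — and the strict positive-definiteness needed for the matrix Positivstellensatz; this is what dictates the reduced monomial vector $m = (y,z)$ (no constant entry, hence no forced zero on the diagonal of $H_k$) together with the $k^{-1}$ downward perturbation of $\lambda$. The remaining work, namely fitting this sequence to the precise hypotheses of \cref{th:sos-convergence-noncompact-int-ineq} and \cref{cor:B-sos-conv}, including the polynomial approximation of the non-polynomial $F^k$ without destroying admissibility or definiteness, is then routine given those results.
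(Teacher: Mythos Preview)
Your proposal is correct and follows essentially the same approach as the paper: the same enlarged domains $\Omega_k$, the same choice $F^k=-A\nabla u^k/u^k$ built from the principal eigenfunction on $\Omega_k$, the same Schur-complement reduction to $(\mu_k-\lambda)c(x)$, and the same downward shift $\lambda^k=\mu_k-k^{-1}$ to secure strict definiteness of $H_k$ for the matrix Positivstellensatz. Your added remark that $h\equiv 0$ is in fact forced by evaluating \cref{eq:pw-inequality-bulk} at $y=z=0$ is a nice observation not made explicit in the paper.
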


\begin{proof}
	The proof is similar to that of \cref{th:sharpness-qm}, so we only sketch the main ideas required to prove that $\HBpdr=\lambda^*$. Choose $h^k=0$, so the $\int_\Omega h^k(x) \dx \geq 0$ constraint in \cref{eq:Bpdr-hat} is satisfied. It remains to check inequality~\cref{eq:pw-inequality-bulk}, which for problem~\cref{e:princ-eig} and $\vphi(x,y) = F^k(x)y^2$ can be written in the matrix form
	\begin{equation}
	\begin{bmatrix}y \\ z\end{bmatrix}^\top
	\begin{bmatrix}
	-\lambda^k + \nabla \cdot F^k(x) & F^k(x)^\top \\
	F^k(x) & A(x)
	\end{bmatrix}
	\begin{bmatrix}y \\ z\end{bmatrix} \geq 0 \quad \forall(x,y,z) \in \Omega \times \mathbb{R} \times \mathbb{R}^n.
	\end{equation}
	Using the Schur complement as in the proof of \cref{th:sharpness-qm}, one can verify that the $x$-dependent matrix in this inequality is positive definite on $\overline{\Omega}$ after setting
	\begin{equation}
	F^k(x) = -\frac{A(x) \nabla u^k(x)}{u^k(x)} \quad \text{and}\quad \lambda^k = \mu^k - \frac1k,
	\end{equation}
	where $u^k$ and $\mu^k$ are the principal eigenfunction and eigenvalue for problem~\cref{e:elliptic-eig-problem} on $\Omega_k$, the $k^{-1}$ neighborhood of $\Omega$. Note that $F^k(x)$ is continuous on $\overline{\Omega}$ because $u^k$ is strictly positive on that set~\cite[\S6.5.1]{Evans1998}. The convergence $\lambda^k \nearrow \lambda^*$ as $k \to \infty$ follows from the continuity of principal eigenvalues of elliptic operators with respect to domain perturbations~\cite{Nirenberg,fleckinger1986,Henrot1994}, which ensures that $\mu^k \nearrow \lambda^*$.
\end{proof}

\subsection{Optimal  $L^{2q}$ Poincar\'{e} inequalities on intervals}
Consider the problem
\begin{equation}
\label{L2n-problem}
\lambda^* = \sup \lambda \;\text{ s.t. }
\displaystyle \inf_{u\in W^{1,2q}_0(\Omega)}
\int_\Omega \left( \abs{\nabla u}^{2q} - \lambda u^{2q}\right) \dx \ge 0,
\end{equation}
where $2q$ is a positive even integer. This $\lambda^*$ is the optimal constant in the Poincar\'e inequality $\Vert u \Vert_{L^{2q}}^{2q}\le\lambda^{-1}\Vert \nabla u\Vert_{L^{2q}}^{2q}$ for $u\in W_0^{2q}$,
as well as the principal eigenvalue of the nonlinear eigenproblem
\begin{equation}
-(2q-1) \nabla \cdot(|\nabla u|^{2q-2}\nabla u) = \lambda u^{2q-1}
\label{p-Laplacian}
\end{equation}
for the $2q$-Laplacian operator $\nabla \cdot(|\nabla u|^{2q-2}\nabla u)$~\cite{Lindqvist2008}. When the domain is a 1D interval, the following theorem guarantees that PDRs give arbitrarily sharp lower bounds on $\lambda^*$. This is of interest as it pertains to the sharpness of the PDR approach, although to compute $\lambda^*$ numerically there is a more explicit formula for 1D intervals \cite{Lindquist}.

\begin{theorem}
\label{thm:L2n}
For problem~\cref{L2n-problem} on a bounded open interval $\Omega\subset\R$:
\begin{enumerate}[{\rm 1.}, leftmargin=*, topsep=0.5ex, itemsep=0.5ex]
\item $\HBpdr=\Bpdr = \lambda^*$. In particular, there exist $F^k \in C(\overline{\Omega})$ and $\lambda^k \in \mathbb{R}$ such that with $\vphi^k =F^k(x) y^{2q}$ the sequence $\{(\lambda^k, \vphi^k,0)\}_{k \geq 1}$ is maximizing for $\HBpdr$, and $\{(\vphi^k,h^k,0,\ldots,0)\}_{k \geq 1}$ is maximizing for $\Bpdr$.
\item Consider a semialgebraic definition of $\Omega$ for which $r^2-x^2\in\mathcal{Q}(\Omega)$.
With $h=0$ and $\vphi(x,y) = F(x)y^{2q}$ for degree-$\nu$ polynomial $F(x)$, the SOS relaxations $\Bsos$ and $\HBsos$ converge to $\lambda^*$ as $\nu\to\infty$.
\end{enumerate}
\end{theorem}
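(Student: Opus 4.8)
The plan is to imitate the proofs of \cref{th:sharpness-qm,th:elliptic-eigs}: build an explicit maximizing sequence for $\HBpdr$ and $\Bpdr$ out of the principal eigenfunction of the $2q$-Laplacian on a slightly enlarged interval. Write $\Omega=(a,b)$ and, for large $k$, let $\Omega_k=(a-\tfrac1k,b+\tfrac1k)$ be its $k^{-1}$-neighbourhood. Let $u^k>0$ be the (scaling-invariantly normalized) principal eigenfunction of \cref{L2n-problem} on $\Omega_k$, i.e.\ the minimizer of the Rayleigh quotient $\int_{\Omega_k}|u_x|^{2q}\,\dx\,/\,\int_{\Omega_k}u^{2q}\,\dx$, with optimal value $\lambda^k_* = \lambda^*(\Omega_k)$; classical one-dimensional results \cite{Lindqvist2008,Lindquist} give that $u^k$ is positive on $\Omega_k$, vanishes at $\partial\Omega_k$, and is $C^1$ on $\overline\Omega\subset\subset\Omega_k$, while its flux $w^k:=|u^k_x|^{2q-2}u^k_x$ satisfies the Euler--Lagrange identity $(w^k)_x=-\lambda^k_*(u^k)^{2q-1}$ and is hence $C^1(\overline\Omega)$. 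Since $u^k>0$ on $\overline\Omega$, the function $F^k(x):=-w^k(x)/u^k(x)^{2q-1}$ is $C^1(\overline\Omega)$. I would then set $\varphi^k(x,y)=F^k(x)y^{2q}$, $h^k=0$, $\ell_1^k=\dots=\ell_s^k=0$, and $\lambda^k=\lambda^k_*-\tfrac1k$.

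First I would verify the PDR constraints. For \cref{L2n-problem} the integrand is $|z|^{2q}-\lambda y^{2q}$ and there is no integral constraint, so $\eta$ is absent; with $\mathcal D\varphi^k(x,y,z)=(F^k)'(x)y^{2q}+2qF^k(x)y^{2q-1}z$, inequality~\cref{eq:pw-inequality-bulk} reads
\[
p^k(x,y,z):=z^{2q}+2qF^k(x)\,y^{2q-1}z+\bigl[(F^k)'(x)-\lambda^k\bigr]y^{2q}\ \ge\ 0\quad\text{on }\Gamma=\Omega\times\R\times\R.
\]
For $y=0$ this is $z^{2q}\ge0$; for $y\neq0$, dividing by $y^{2q}>0$ and writing $s=z/y$ reduces it to the elementary fact that $s^{2q}+2qc\,s+d\ge0$ for all $s\in\R$ precisely when $d\ge(2q-1)|c|^{2q/(2q-1)}$ (the minimum over $s$ is at $s=-\mathrm{sgn}(c)|c|^{1/(2q-1)}$). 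Differentiating $F^k$ and inserting $(w^k)_x=-\lambda^k_*(u^k)^{2q-1}$ gives exactly $(F^k)'(x)=\lambda^k_*+(2q-1)|F^k(x)|^{2q/(2q-1)}$, so $(F^k)'(x)-\lambda^k=\tfrac1k+(2q-1)|F^k(x)|^{2q/(2q-1)}$ and the inequality holds with a uniform margin $\tfrac1k$. The remaining constraints are immediate: $\int_\Omega h^k\,\dx=0\ge0$, and on each $\Lambda_i=\{(x_i,0)\}$ (the homogeneous Dirichlet condition forces $y=0$) one has $\varphi^k(x_i,0)=0$, so~\cref{eq:pw-inequality-boundary} holds with $\ell_i^k=0$. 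One checks $\varphi^k\in\Phi\cap\widehat\Phi$: it is $C^1$ on $\overline\Omega\times\R$, the bound $|\mathcal D\varphi^k(x,u,u_x)|\lesssim|u|^{2q}+|u_x|^{2q}$ gives integrability for all $u\in W^{1,2q}$, and $\int_\Omega\mathcal D\varphi^k(x,u,u_x)\,\dx=[F^ku^{2q}]_a^b$ coincides with the boundary integral in~\cref{eq:div-theorem} for every $u\in W^{1,2q}$ and vanishes when $u$ satisfies the Dirichlet condition. Thus $\{(\lambda^k,\varphi^k,h^k)\}_k$ is feasible for $\HBpdr$ and $\{(\lambda^k,\varphi^k,h^k,0,\dots,0)\}_k$ is feasible for $\Bpdr$. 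To finish Part~1 I would note $\lambda^k_*=(|\Omega|+\tfrac2k)^{-2q}\lambda^*((0,1))$ by the dilation $x\mapsto x/|\Omega_k|$ (alternatively, domain-continuity of the principal eigenvalue of the $2q$-Laplacian), so $\lambda^k=\lambda^k_*-\tfrac1k\nearrow\lambda^*$; combined with $\HBpdr\le\cB^*=\lambda^*$ and $\Bpdr\le\cB^*=\lambda^*$, this yields $\HBpdr=\Bpdr=\lambda^*$ and shows the two sequences are maximizing.

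For Part~2 I would verify the hypotheses of \cref{th:sos-convergence-noncompact-int-ineq} (with condition~3a) and of \cref{cor:B-sos-conv}. The sequence already has $\varphi^k(x,y)=F^k(x)y^{2q}$ polynomial in $y$ with $\varphi^k(x,0)=0$, all $\ell_i^k=0$, $h^k=0$ polynomial, and $\varphi^k\in\Phi\cap\widehat\Phi$; the hypothesis $r^2-x^2\in\mathcal Q(\Omega)$ is assumed. The one remaining item is the matrix representation demanded by assumption~2 of \cref{th:sos-convergence-noncompact}: that $f+\mathcal D\varphi^k-h^k=m(y,z)^{\mathsf T}H_k(x)m(y,z)$ for a fixed polynomial vector $m$ and a continuous $H_k\colon\overline\Omega\to\mathbb S^N$ with $H_k\succ0$. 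Here $f+\mathcal D\varphi^k-h^k=p^k$, and the margin computation shows $p^k(x,y,z)\ge\tfrac1k y^{2q}$ together with $p^k(x,0,z)=z^{2q}$, so $p^k(x,\cdot,\cdot)$ is a positive definite binary form of degree $2q$ with $p^k(x,y,z)\ge\delta_k(y^2+z^2)^q$ for $\delta_k:=\min_{\overline\Omega\times S^1}p^k>0$. Taking $m(y,z)=(y^q,y^{q-1}z,\dots,z^q)^{\mathsf T}$, the PSD binary form $p^k(x,\cdot,\cdot)-\delta_k(y^2+z^2)^q$ admits a Gram matrix $B_k(x)\succeq0$ depending continuously on $x$ (its Gram matrices form a compact convex set varying continuously with the coefficients of $p^k$, permitting a continuous selection), and since $(y^2+z^2)^q=\sum_{j}\binom qj(y^jz^{q-j})^2$ has the positive definite Gram matrix $D_q:=\diag\!\bigl(\binom q0,\dots,\binom qq\bigr)$, the choice $H_k(x):=B_k(x)+\delta_kD_q\succeq\delta_kD_q\succ0$ gives $p^k(x,\cdot,\cdot)=m^{\mathsf T}H_k(x)m$. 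Then \cref{th:sos-convergence-noncompact-int-ineq} yields $\Bsos\nearrow\Bpdr=\lambda^*$ and \cref{cor:B-sos-conv} yields $\HBsos\nearrow\HBpdr=\lambda^*$.

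The main obstacle is exactly the matrix representation in the last step: the expression $p^k$ is a positive-definite \emph{binary} form, and its canonical Gram matrix is only positive semidefinite (for $y^{2q}+z^{2q}$ it has rank $2$), so one cannot directly apply the matrix Positivstellensatz inside \cref{th:sos-convergence-noncompact}. The remedy is to peel off the positive-definite form $\delta_k(y^2+z^2)^q$, whose explicit Gram matrix $D_q$ is positive definite, and to arrange the residual PSD Gram matrix continuously in $x$. Everything else — the elementary characterization of nonnegativity of $s^{2q}+2qcs+d$, the $C^1$ regularity of the flux $w^k$ coming for free from the first-order Euler--Lagrange identity, and the length-scaling of the Poincar\'e constant on intervals — is routine.
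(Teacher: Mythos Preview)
Your argument for Part~1 is correct and matches the paper's: the same choice $F^k=-w^k/(u^k)^{2q-1}$ built from the principal eigenfunction on the enlarged interval $\Omega_k$, the same $\lambda^k=\lambda^*(\Omega_k)-k^{-1}$, and the same domain-continuity to get $\lambda^k\nearrow\lambda^*$. Your verification of~\cref{eq:pw-inequality-bulk} via the scalar inequality $s^{2q}+2qcs+d\ge0\iff d\ge(2q-1)|c|^{2q/(2q-1)}$ is a clean shortcut; the paper instead jumps straight to a matrix representation because that is what Part~2 needs.

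For Part~2 the two proofs diverge. The paper builds $H_k(x)\succ0$ \emph{explicitly}: it perturbs $F^k$ by a small $\varepsilon>0$, inserts auxiliary functions $G_j^k(x)$ to add and subtract the cross-terms $2G_j^k y^{2q-2j}z^{2j}$, and shows the resulting Gram matrix splits as $H_1^k+(1-2\varepsilon)D^k H_2 D^k$ with $H_1^k$ strictly diagonally dominant (hence $\succ0$) and $H_2$ weakly diagonally dominant (hence $\succeq0$). Your route is more abstract: since $p^k(x,\cdot,\cdot)$ is a uniformly positive-definite binary form, peel off $\delta_k(y^2+z^2)^q$ with its diagonal PD Gram matrix $D_q$ and take a continuous PSD Gram matrix of the residual. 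The paper's construction is self-contained; yours avoids the combinatorics of $H_1^k,H_2$ but leans on a continuous-selection claim.

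That selection step is the one place your write-up is thin. The parenthetical ``its Gram matrices form a compact convex set varying continuously with the coefficients, permitting a continuous selection'' is true but not immediate: the residual $p^k-\delta_k(y^2+z^2)^q$ is only PSD, its PSD Gram spectrahedron can degenerate to a face, and lower hemicontinuity (needed for Michael's theorem) is not obvious. The clean fix is to bypass the residual and work with $p^k$ itself: for each $x_0\in\overline\Omega$ there is a PD Gram matrix $M_0$ of $p^k(x_0,\cdot,\cdot)$ (since $p^k\ge\delta_k(y^2+z^2)^q$ and PD binary forms admit PD Gram matrices), and $M_0+[G^0(p^k(x,\cdot))-G^0(p^k(x_0,\cdot))]$ remains PD on a neighbourhood of $x_0$ for any fixed linear lift $G^0$. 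A finite cover of the compact $\overline\Omega$ and a partition of unity then patch these local PD Gram matrices into a continuous $H_k(x)\succ0$, since convex combinations of Gram matrices of the same form are again Gram matrices. With that addition your Part~2 goes through, and \cref{th:sos-convergence-noncompact-int-ineq} (condition~3a) and \cref{cor:B-sos-conv} apply as you say.
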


\begin{proof}[Proof of~\Cref{thm:L2n}]
Set $h^k=0$, so the constraint $\int_{\Omega}h^k(x) \,\dx \geq 0$ in \cref{eq:Bpdr-hat} is satisfied. For any fixed integer $q \geq 1$, let $\vphi^k(x,y) = F^k(x)y^{2q}$. Inequality~\cref{eq:pw-inequality-bulk} reads 
\begin{equation}
z^{2q} - \lambda^k y^{2q} + F_x^ky^{2q} + 2qF^k(x)y^{2q - 1}z \ge 0
\label{L2pineq}
\end{equation}
and must hold for all $(x,y,z)\in \Gamma = \Omega\times\mathbb{R}\times\mathbb{R}$. For each $j = 1,2,\dots,q-1$, we will add and subtract the terms $2G^k_j(x)y^{2q - 2j}z^{2j}$ on the left-hand side of the pointwise inequality \cref{L2pineq}, so that we can rewrite it in matrix form $m(y,z)^\top H_k(x)m(y,z) \geq 0$ with $m(y,z) = \begin{bmatrix} y^q & y^{q-1}z & y^{q-2}z^2 & \dots & yz^{q-1} & z^q\end{bmatrix}^\top$ and
\begin{equation}
\arraycolsep=.5pt\def\arraystretch{1}
	\begin{split}
	&H^k(x)\!=\!\! \small\begin{bmatrix}
		F^k_x(x) - \lambda^k & qF^k(x) & -G^k_1(x) & 0 & \cdots & 0 & 0 & 0 \\
		qF^k(x) & 2G^k_1(x) & 0 & -G^k_2(x) & \cdots & 0 & 0 & 0 \\
		-G^k_1(x) & 0 & 2G^k_2(x) & 0 & \cdots & 0 & 0 & 0 \\
		0 & -G^k_2(x) & 0 & 2G^k_3(x) & \cdots & 0 & 0 & 0 \\
		\vdots & \vdots & \vdots & \vdots & \ddots & \vdots & \vdots & \vdots \\
		0 & 0 & 0 & 0 & \cdots & 2G^k_{q-2}(x) & 0 & -G^k_{q-1}(x) \\
		0 & 0 & 0 & 0 & \cdots & 0 & 2G^k_{q-1}(x) & 0 \\
		0 & 0 & 0 & 0 & \cdots & -G^k_{q-1}(x) & 0 & 1 \\
	\end{bmatrix}.
	\end{split}
\end{equation}
We seek functions $F^k,G^k_1,\dots,G^k_{q-1}$ and scalars $\lambda^k \nearrow \lambda^*$ such that $H_k(x)\succ0$ on $\overline\Omega$. While $H_k\succeq0$ suffices for part 1 of the theorem, $H_k\succ0$ lets~\cref{cor:B-sos-conv} be applied to ensure the convergence of SOS relaxations in part 2.

For each $k$ let
\begin{equation}
	F^k(x) = -(1 - 2\varepsilon)\bigg(\frac{u_x^k}{u^k}\bigg)^{2q-1} - \varepsilon, \quad G^k_j(x) = (1 - 2\varepsilon)(q - j)\bigg(\frac{u_x^k}{u^k}\bigg)^{2q - 2j} + \varepsilon
\end{equation}
with $\varepsilon>0$ to be chosen later. Let $\lambda^k = \mu^k - k^{-1}$, where $(\mu^k,u^k)$ is the principal eigenvalue--eigenfunction pair for the following nonlinear eigenproblem on the $k^{-1}$ neighborhood $\Omega_k\supset\Omega$,
\begin{equation}\label{pLapEqn}
	(1 - 2\varepsilon)(2q - 1)(u^k_x)^{2q - 2}u^k_{xx} + \mu^k(u^k)^{2q - 1} = 0\ \mathrm{on}\ \Omega_k, \quad u^k = 0\ \mathrm{on}\ \partial\Omega_k.
\end{equation}
Since $u^k$ is smooth and positive on $\Omega_k$~\cite{anane1987simplicite}, the functions $F^k$ and $G^k_j$ are continuous on $\overline{\Omega}$. Moreover, applying \cref{pLapEqn} gives $F^k_x(x) = \mu^k + (1-2\varepsilon)(2q - 1)(u_x^k/u^k)^{2q}$. The matrix $H^k(x)$ then can be written as 
$H^k(x) = H^k_1 + (1 - 2\varepsilon)D^k(x)H_2D^k(x)$,
where $D^k(x) = \mathrm{Diag}\big(\big[u^k_x/u^k\big]^q,\,\big[u^k_x/u^k\big]^{q-1},\,\ldots,\,1\big)$ and
\begin{subequations}
	\begin{equation}
	H^k_1= {\footnotesize \begin{bmatrix}
		\frac{1}{k}  & -q\varepsilon & -\varepsilon & 0 & 0 & \cdots & 0 & 0 & 0 \\
		-q\varepsilon & 2\varepsilon & 0 & -\varepsilon & 0 & \cdots & 0 & 0 & 0 \\
		-\varepsilon & 0 & 2\varepsilon & 0 & -\varepsilon  & \cdots & 0 & 0 & 0 \\
		0 & -\varepsilon & 0 & 2\varepsilon & 0 & \cdots & 0 & 0 & 0  \\
		0 & 0 & -\varepsilon & 0 & 2\varepsilon & \cdots & 0 & 0 & 0  \\
		\vdots & \vdots & \vdots & \vdots & \vdots & \ddots & \vdots & \vdots & \vdots \\
		0 & 0 & 0 & 0 & 0 & \cdots & 2\varepsilon & 0 & -\varepsilon \\
		0 & 0 & 0 & 0 & 0 & \cdots & 0 & 2\varepsilon & 0 \\
		0 & 0 & 0 & 0 & 0 & \cdots & -\varepsilon & 0 & 2\varepsilon \\
		\end{bmatrix} },
	\end{equation}
	\begin{equation}
	\begin{split}
	&H^k_2 = \footnotesize\begin{bmatrix}
	(2q -1)  & -q & -(q - 1) & 0 & 0 & \cdots & 0 & 0 & 0 \\
	-q & (2q - 2) & 0 & -(q - 2) & 0 & \cdots & 0 & 0 & 0 \\
	-(q - 1) & 0 & (2q - 4) & 0 & -(q-3)  & \cdots & 0 & 0 & 0 \\
	0 & -(q - 2) & 0 & (2q - 6) & 0 & \cdots & 0 & 0 & 0  \\
	0 & 0 & -(q-3) & 0 & (2q - 8) & \cdots & 0 & 0 & 0  \\
	\vdots & \vdots & \vdots & \vdots & \vdots & \ddots & \vdots & \vdots & \vdots \\
	0 & 0 & 0 & 0 & 0 & \cdots & 4 & 0 & -1 \\
	0 & 0 & 0 & 0 & 0 & \cdots & 0 & 2 & 0 \\
	0 & 0 & 0 & 0 & 0 & \cdots & -1 & 0 & 1 \\
	\end{bmatrix}.
	\end{split}
	\end{equation}
\end{subequations}
Choose any positive $\varepsilon < \min\{\frac{1}{2},k^{-1}(q+1)^{-1}\}$, so the matrix $H^k_1$ is diagonally dominant. Since its first and last rows are strictly diagonally dominant and $H^k_1$ is irreducible, we have $H^k_1\succ0$. Similarly, $H^k_2\succeq0$ because it has positive diagonal entries and is row diagonally dominant (all row sums vanish). The matrix $D^k(x)H_2^kD^k(x)$ is therefore positive semidefinite for all $x\in\overline\Omega$, so $H^k(x)\succ0$ on $\overline\Omega$. To conclude the proof, observe that $\mu^k\nearrow\lambda^*$ as $k\to \infty$ by the domain monotonicity and continuity properties of $2q$-Laplacian operators~\cite[Lemma 2.2]{diBlasio2020}, so $\lambda^k\nearrow\lambda^*$ also.
\end{proof}

\section{\label{sec:computational}Computational examples displaying convergence}

This section reports SOS computations giving lower bounds on the exact optima of various example problems. In each case, the computed bounds appear to become sharp as the polynomial degree is raised. This convergence is guaranteed by \cref{th:elliptic-eigs,thm:L2n} for the examples of \cref{sec: parabolic-example,sec: L4-example}, respectively, whereas in~\cref{sec:nonconvex} convergence is observed numerically but has not been proved by our analysis.
In all computations we use a modified version\footnote{Available from \url{https://github.com/aeroimperial-optimization/aeroimperial-yalmip}} of YALMIP~\cite{Lofberg2004,Lofberg2009} to reformulate SOS programs as SDPs and to interface with SDPA-GMP~\cite{SDPA}, which solves the SDP in multiple precision arithmetic to overcome numerical ill-conditioning.

\subsection{Optimal Poincar\'{e} inequalities for the $L^2$ norm on 2D domains}
\label{sec: parabolic-example}

For a first computational test of PDRs, consider problem~\cref{e:princ-eig} with
$A = I_{2\times2}$, $b=0$, and $c=1$.
We compute the SOS relaxations $\HBsos$ that bound $\lambda^*$ below. For $\Omega$ we choose two different 2D domains with corners, one a lens shape bounded by parabolas, and one a crescent bounded by circular arcs. These domains are shown in \cref{table:par-cs}, and their semialgebraic definitions are, respectively,
\begin{subequations}
\begin{align}
\label{parabolic-domain}
&\{ x\in\mathbb{R}^2 \colon\;  g_1(x) := 2x_1-x_2^2+1\ge 0, \;g_2(x) := 1-x_2^2-2x_1\ge 0\}, \\
\label{cs-domain}
&\{ x\in\mathbb{R}^2 \colon\;
	g_1(x) := 1-x_1^2-x_2^2 \ge 0,\; g_2(x) := (x_1+1)^2 + x_2^2 - 2 \ge 0 \}.
\end{align}
\end{subequations}
For each domain, our SOS computations seek rational $\vphi$ of the form
\beq
\label{eq:phi-poincare-2d}
\varphi(x,y) = \frac{P(x) y^2}{g_1(x)g_2(x)},
\eeq
where $P(x)$ is a degree-$\nu$ polynomial in $x\in\R^2$ whose coefficients are tunable in the SOS program. The derivation of polynomial SOS conditions for rational $\vphi$ is as described in \cref{rem:rational}. The rational form~\cref{eq:phi-poincare-2d} of $\vphi$ is motivated by the optimal $\vphi$ constructed in the proof of \cref{th:elliptic-eigs}, which suggests $\vphi(x,y)=F(x)y^2$ in the present example with $F(x)$ having a simple pole at the domain boundary.

The table in~\cref{table:par-cs} reports $\HBsos$ using these rational $\vphi$ for various $\nu$ in both domains. The values converge quickly to $\lambda^*$, which to the best of our knowledge is known analytically only for the lens-shaped domain~\cite{MorseFeshbach}. Since rational $\vphi$ generalize polynomial ones, the convergence $\HBsos\nearrow\lambda^*$ as $\nu\to\infty$ can be guaranteed as a corollary of \cref{th:elliptic-eigs}. The convergence rate, however, is much faster when $\vphi$ is rational as in~\cref{eq:phi-poincare-2d} than when it is simply a polynomial of the form $P(x)y^2$.

\begin{figure}[t]
\centering
\begin{tabular}{crr}
& \multicolumn{2}{c}{$\HBsos$} \\
\cline{2-3}
$\nu$ & Lens~ & Crescent \\
\hline
1 & 0.0000 & 0.0000 \\
3 & 0.0000 & 26.0860 \\
5 & 0.0000 & 35.2271 \\
7 & 14.3440 & 35.5110 \\
9 & 16.0778 & 35.5241 \\
11 & 16.1005 & 35.5256 \\
13 & 16.1009 & 35.5258 \\
\end{tabular}
\hspace{20pt}
\begin{tabular}{@{}c@{}}
\includegraphics[width=0.33\textwidth]{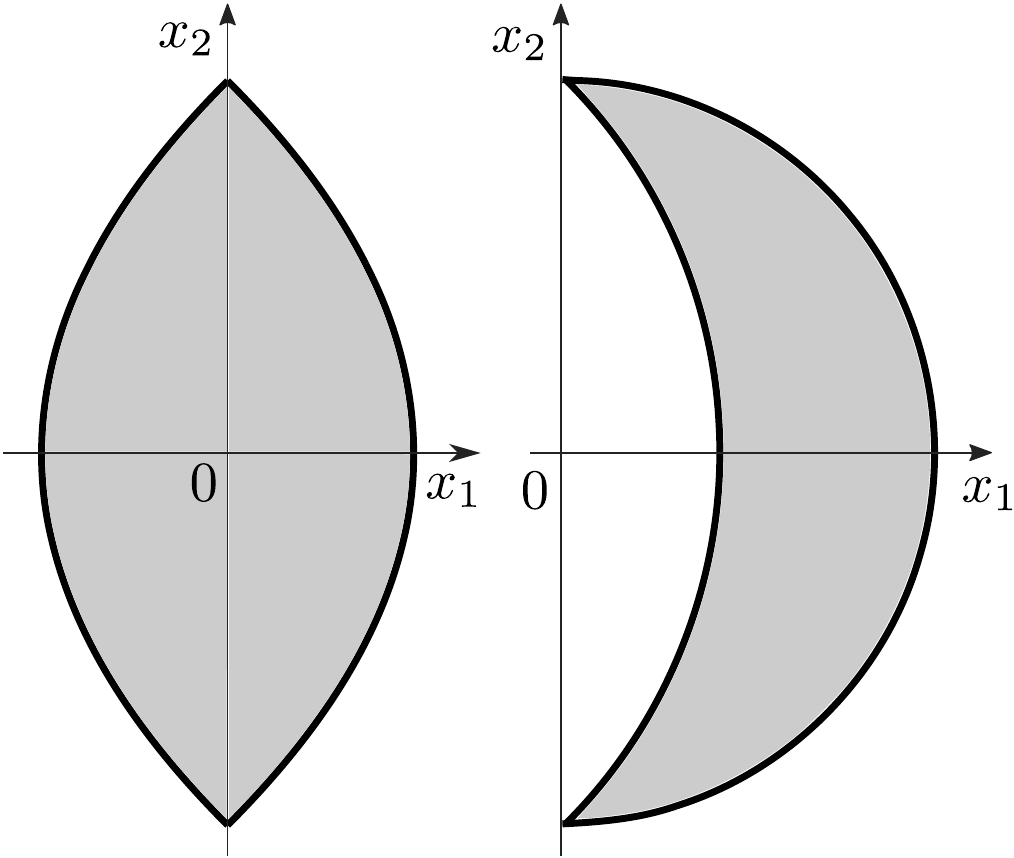}
\end{tabular}\qquad
\caption{Lower bounds $\HBsos$ on the optimal Poincar\'e constant $\lambda^*$ for the $L^2$ norm in 2D, computed by solving SOS programs to construct rational $\vphi$ with numerators of polynomial degree $\nu$ in $(x_1,x_2)$ (see text). Computations were carried out for a lens-shaped domain (left) and crescent-shaped domain (right) defined by \cref{parabolic-domain,cs-domain}, respectively. The literature gives an analytical value of $\lambda^*= (2\gamma_0)^2\approx16.100953$ for the lens domain, where $\gamma_0$ is the smallest zero of the Bessel function of the first kind $J_{-1/4}$~\cite{MorseFeshbach}. To the best of our knowledge, no analytical value is available for the crescent domain. Tabulated values have been rounded to the precision shown.}
\label{table:par-cs}
\end{figure}

\subsection{Optimal Poincar\'{e} inequality for the $L^4$ norm in 1D}
\label{sec: L4-example}

We consider problem~\cref{L2n-problem} with $2q=4$ and $\Omega=(-1,1)$, for
which $\lambda^*\approx4.566$ is known analytically~\cite{Lindquist}.
\Cref{thm:L2n} guarantees that $\HBsos\nearrow\HBpdr=\lambda^*$. To test this convergence in practice we formulate SOS relaxations using rational $\vphi$ of the form $\varphi(x,y) = P(x)y^2/(1-x^2)^3$, where the degree-$\nu$ polynomial $P$ is tunable in the SOS program (cf.\ \cref{rem:rational}).
This rational ansatz is motivated by the optimal $\vphi$ constructed in the proof of~\Cref{thm:L2n}, which suggests using $\vphi(x,y)=F(x)y^2$ with $F(x)$ having a third-order pole at the boundary points. \Cref{table:L4} reports the resulting values of $\HBsos$, which increase towards $\lambda^*$ as $\nu$ is raised. As in the previous example, rational $\vphi$ leads to faster convergence compared to purely polynomial $\vphi$.

\begin{table}[t]
\centering
\caption{Lower bounds $\HBsos$ on the optimal Poincar\'e constant $\lambda^*$ for the $L^4$ norm in 1D, computed by solving SOS programs to construct rational $\vphi$ with numerators of polynomial degree $\nu$ in $x$. The literature gives an analytical expression~\cite{Lindquist} whose value is $\lambda^*\approx4.566$. Values have been rounded to the precision shown.}
\small
\bgroup
\def\arraystretch{0.9}
\begin{tabularx}{\textwidth}{>{\centering\arraybackslash}X
		>{\centering\arraybackslash}X
		>{\centering\arraybackslash}X
		>{\centering\arraybackslash}X
		>{\centering\arraybackslash}X
		>{\centering\arraybackslash}X
		>{\centering\arraybackslash}X
		>{\centering\arraybackslash}X
		>{\centering\arraybackslash}X
		>{\centering\arraybackslash}X}
\toprule
$\nu$ & 1 & 11 & 21 & 31 & 41 & 51 & 101 & 151 & 201 \\
\midrule
$\HBsos$ & 0.000 & 4.217 & 4.479 & 4.528 & 4.540 & 4.542 & 4.548 & 4.556 & 4.566\\
\bottomrule
\end{tabularx}
\egroup
\normalsize
\label{table:L4}
\end{table}

\subsection{\label{sec:nonconvex}An example that is non-convex in the derivative}

\begin{table}[t]
\centering
\caption{Lower bounds $\Lsos$ on the infimum $\cF^*\approx0.02075$ of~\cref{Pedregal1}, computed by solving SOS programs to construct polynomial $\vphi$ with degree $\nu$ in $x$. Values have been rounded to the precision shown.}
\small
\bgroup
\def\arraystretch{0.9}
\setlength{\tabcolsep}{2pt}
\begin{tabularx}{\textwidth}{>{\centering\arraybackslash}X
		>{\centering\arraybackslash}X
		>{\centering\arraybackslash}X
		>{\centering\arraybackslash}X
		>{\centering\arraybackslash}X
		>{\centering\arraybackslash}X
		>{\centering\arraybackslash}X
		>{\centering\arraybackslash}X
		>{\centering\arraybackslash}X
		>{\centering\arraybackslash}X}
\toprule
$\nu$ & 1 & 2 & 3 & 4 & 5 & 6 & $\cdots$ & 27 & 28 \\
\midrule
$\Lsos$ & 0.00000 & 0.00780 & 0.01897 & 0.01969 & 0.02073 & 0.02074 &
$\cdots$ & 0.02074 & 0.02075  \\
\bottomrule
\end{tabularx}
\egroup
\normalsize
\label{table-ncvx}
\end{table}

Consider the following integral variational problem over $u:(0,1)\to\R$ with fixed boundary values:
\begin{equation}
\mathcal{F}^* = \inf_{\substack{u\in W^{1,2}\\ \text{s.t.}~u(0)=0\\\hspace{17pt}u(1)=\frac12}} \int_0^1 \left[(u_x^2 - 1)^2 + \tfrac12 u^2\right] \dx.
\label{Pedregal1}
\end{equation}
The integrand is non-convex in $u_x$, and the infimum is not attained: minimizing sequences of $u$ are increasingly oscillatory and do not have a limit in $W^{1,2}$~\cite{Pedregal2000}. In this 1D example the relaxed minimization over gradient Young measures has the same minimum~\cite{PedregalBook}, and a numerical method to discretize the relaxed problem appears in~\cite{Egozcue2003}. 
We have implemented the method of \cite{Egozcue2003} with increasing resolution to
find $\mathcal{F}^* \approx 0.02075$. This value is in fact equal to $\cF^*$, up to the reported precision, as verified by our lower bounds. To compute lower bounds on $\cF^*$ we use the relaxation $\Lsos$ defined in~\cref{eq:int-ineq-SOS} with $\varphi(x,y)=P(x) + Q(x)y + R(x)y^2$, where $P,Q,R$ are degree-$\nu$ polynomials that are tunable in the SOS programs. Raising $\nu$ gives lower bounds $\Lsos$ that converge quickly to 0.02075, as reported in~\cref{table-ncvx}. We note that the equality $\Lpdr=\cF^*$ is implied for this example  by \cite[Theorem 5.1 and Remark 5.2]{Fantuzzi2022}, but the numerically apparent convergence $\Lsos\nearrow\cF^*$ remains to be proved.

\section{\label{sec: con}Conclusions}

We have described a framework for finding lower bounds on global infima of integral variational problems, possibly subject to integral and pointwise constraints, by relaxing the Lagrangian dual problem into more tractable problems whose suprema give lower bounds. As proved in~\cite{Fantuzzi2022}, the relaxed maximizations obtained with this approach are weakly Lagrangian dual to the measure-theoretic relaxations described in~\cite{Korda2018}, and they are strongly dual if certain coercivity conditions hold. We have also extended the framework to more general optimization problems where variational integrals appear in constraints.

In the case of problems posed with polynomial data, further relaxation steps lead to computationally tractable sum-of-squares programs. We have proved that the solutions of such SOS programs with increasing polynomial degree converge to the exact answers for three particular classes of problems: variational problems with quadratic integrands, optimizations constrained by variational integrals that define eigenvalues of elliptic operators, and similar optimizations that define optimal constants of the $L^p$ Poincar\'e inequality in one dimension with even $p$. For several examples that fall in these classes and one that does not, we tested the SOS computational approach by computing bounds on optima using increasing polynomial degrees. In all examples, the computed bounds appear to converge to the exact answers to the original problems. The examples considered were simple enough that their optima could be found by existing analytical or numerical methods, so that it was possible to verify success of the present approach. However, our approach can 
be applied much more broadly, giving one-sided bounds on optima for problems that are not otherwise tractable. Determining when these bounds are sharp is the focus of ongoing work.

In the effort to characterize the conditions under which the present approach does and does not give sharp bounds on the optima of the original problems, our proofs of sharpness rely on semi-explicit constructions of optimizers for the relaxed dual problems in terms of solutions to corresponding Euler--Lagrange equations. A less explicit approach is likely needed to characterize sharpness more generally. The works~\cite{Fantuzzi2022,Korda2022} take some steps in this direction, but they also give explicit examples in which our relaxation approach is \textit{not} sharp. Further progress may be made by better characterizing how our relaxations relate to others in the variational analysis literature, such as the translation~\cite{Firoozye1991} or calibration~\cite[\S1.2]{Buttazzo1998} methods.

The dual relaxation approach studied here can be generalized in various ways. One way is to expand the set of admissible divergence theorem identities to include all null Lagrangians, as described in \cref{rem:null}. Another is to let the variational integrals of $u$ being studied, and the null Lagrangians employed, depend on higher derivatives than $\nabla u$; special cases of this generalization have been written down previously~\cite{Ahmadi2016,Ahmadi2017}. A third extension concerns boundary conditions on $u$ that are not of Dirichlet type. This is needed to impose periodicity of $u$, as well as for Neumann conditions that require pointwise boundary constraints to depend on $\nabla u$ rather than only on $u$. We have practically implemented such boundary conditions in several examples that will be reported in a subsequent publication, and a general approach (without sharpness results) is discussed in~\cite{Korda2018} under the assumption that $u$ is Lipschitz continuous. Extending the general framework, and proving its sharpness, for such boundary conditions with less regular $u$ is the subject of ongoing work. Lastly, it remains an open problem to find ways to improve the computational efficiency and the generality of the SOS approach we have described. Symmetries of a variational problem, for instance, can be exploited to reduce the size of SOS programs, and the same may be true of other types of problem structure. It should also be possible to adapt the SOS framework to tackle problems that are not purely polynomial but include, for example, trigonometric or exponential terms~\cite{Megretski2003,Roh2007}. A promising application area that calls for all of these extensions is the analysis of nonlinear PDEs through variational methods, where SOS methods are just starting to be employed~\cite{Chernyshenko2014a,Ahmadi2016,Goluskin2019a,Magron2018,Korda2018,Ahmadi2019,Fuentes2022,Marx2020}.

\section*{Acknowledgements}

This work has benefited from numerous insights by Ian Tobasco, including the connection to null Lagrangians described in Remark 2.1. We also acknowledge helpful discussions with Charles Doering, Andrew Wynn, Antonis Papachristodoulou, and Giorgio Valmorbida.


\appendix
\section{Proof of \texorpdfstring{\cref{th:sos-variational-problem}}{Theorem \ref{th:sos-variational-problem}}}
\label{ss:proof-sos-variational}
It suffices to prove that, for any $\varepsilon>0$, there exists $\nu$ such that 
$\Lsos \geq \Lpdr - 2\varepsilon.$
Fix any feasible tuple $(\eta^\varepsilon, \vphi^\varepsilon,h^\varepsilon,\ell_1^\varepsilon,\ldots,\ell_s^\varepsilon)$ for~\cref{eq:relaxed-dual} that satisfies 
\begin{equation}
\label{eq:epsilon-suboptimality-nonpoly-VP}
\int_{\Omega} h^\varepsilon(x) \, \dx + \sum_{i=1}^s \int_{\partial\Omega_i} \ell_i^\varepsilon(x) \,\dbound \geq \Lpdr - \varepsilon.
\end{equation}
We will use $\vphi^\varepsilon,h^\varepsilon,\ell_1^\varepsilon,\ldots,\ell_s^\varepsilon$ to construct polynomials $\vphi,h,q_1,\ldots,q_s$ such that the tuple $(\eta^\varepsilon, \vphi,h,q_1,\ldots,q_s)$ is feasible for~\cref{eq:vp-SOS} for sufficiently large $\nu$ and achieves an objective value larger than $\Lpdr - 2\varepsilon$. This will imply $\Lsos \geq \Lpdr - 2\varepsilon$.

By assumption, the domain $\Omega$ and the boundary sections $\partial\Omega_i$ are compact and $\abs{\nabla g_i}$ is strictly positive on $\partial\Omega_i$. Moreover, the $(x,y)$ projection of the compact set $\Gamma$, denoted $\Gamma_{xy}$ below, must be a compact subset of $\R^n\times\R^m$. Since polynomials are dense in the space of $t$-times continuously differentiable functions on compact sets for all $t$, we can find polynomial $\vphi$, $h$, $q_1$, $\ldots$, $q_s$ that satisfy
\begin{subequations}
	\begin{align}
	\label{eq:poly-approx-1}
	\max_{i=1,\ldots,n} \left\| \vphi_i - \vphi^\varepsilon_i \right\|_{C^1(\Gamma_{xy})} &\leq \beta \varepsilon,\\
	\label{eq:poly-approx-2}
	\left\| h - h^\varepsilon + \alpha_0 \varepsilon \right\|_{C(\Omega)} &\leq \beta \varepsilon,\\
	\label{eq:poly-approx-3}
	\left\| q_i - \ell^\varepsilon_i \abs{\nabla g_i}^{-1} + \alpha_i\varepsilon \abs{\nabla g_i}^{-2} \right\|_{C(\partial\Omega_i)} &\leq \beta \varepsilon,
	\end{align}
\end{subequations}
where $\alpha_0,\ldots,\alpha_s$ and $\beta$ are strictly positive constants to be determined later.

Omitting all function arguments to simplify the notation, we may estimate
\begin{align}
\mathcal{D}\vphi^\varepsilon - \mathcal{D}\vphi
&= \sum_{i=1}^n \frac{\partial}{\partial x_i} (\vphi_i^\varepsilon - \vphi_i) +   \sum_{i=1}^n \sum_{j=1}^m \frac{\partial}{\partial y_i} (\vphi_i^\varepsilon - \vphi_i) z_{ji}\\\nonumber
&\leq \bigg( n + \sum_{i=1}^n \sum_{j=1}^m \max_{\Gamma} \vert z_{ji} \vert \bigg) \beta \varepsilon 
\qquad \forall (x,y,z) \in \Gamma.
\end{align}
Thus,
\begin{align}
f + \mathcal{D}\vphi - \eta^\varepsilon a - h
&\geq 
f + \mathcal{D}\vphi^\varepsilon - \eta^\varepsilon a - h^\varepsilon 
+ \alpha_0\varepsilon 
- \bigg( 1 + n + \sum_{i,j} \max_{\Gamma} |z_{ji}|\bigg) \beta\varepsilon
\\\nonumber
&\geq  \alpha_0 \varepsilon - C_0 \beta\varepsilon
\qquad \forall (x,y,z) \in \Gamma,
\end{align}
where $C_0:= 1 + n + \sum_{i,j} \max_{\Gamma} |z_{ji}|$ is a positive constant that does not depend on $\alpha_0$, $\beta$ or $\varepsilon$.
The second inequality holds because $(\eta^\varepsilon, \vphi^\varepsilon,h^\varepsilon,\ell_1^\varepsilon,\ldots,\ell_s^\varepsilon)$ is feasible for~\cref{eq:relaxed-dual}. Similarly, on each $\Lambda_i$ the inequality $\vphi \cdot \nabla g_i - q_i \abs{\nabla g_i}^2 \geq 
\alpha_i \varepsilon - C_i \beta \varepsilon$ holds for some positive constant $C_i$ that depends only on the particular $\partial\Omega_i$ and $g_i$ under consideration, not on $\alpha_i$, $\beta$ or $\varepsilon$. The details are omitted for brevity.

Now choose $\alpha_0 = 2C_0 \beta$ and $\alpha_i = 2C_i \beta$ for all $i=1,\ldots,s$ to obtain
\begin{subequations}
	\begin{align}
	f + \mathcal{D}\vphi - \eta^\varepsilon a - h &\geq C_0\beta \varepsilon \quad \text{ on } \Gamma,\\
	\vphi \cdot \nabla g_i - q_i \abs{\nabla g_i}^2 &\geq C_i\beta \varepsilon \quad \text{ on } \Lambda_i, \quad i = 1,\ldots,s.
	\end{align}
\end{subequations}
The right-hand sides of these inequalities are strictly positive and, by assumption, $r_0^2 - |(x,y,z)|^2 \in \mathcal{Q}(\Gamma)$ and $r_i^2 - |(x,y)|^2 \in \mathcal{Q}(\Lambda_i)$, therefore Putinar's Positivstellensatz~\cite{Putinar} guarantees that $f + \mathcal{D}\vphi - \eta^\varepsilon a - h \in \mathcal{Q}_\nu(\Gamma)$ and $\vphi \cdot \nabla g_i - q_i \abs{\nabla g_i}^2 \in \mathcal{Q}_\nu(\Lambda_i)$ for a sufficiently large integer $\nu$. In other words, the tuple $(\eta^\varepsilon,\vphi,h,q_1,\ldots,q_s)$ is feasible for the SOS program~\cref{eq:vp-SOS}. Then, we must have
\ifx\SIOPT\undefined
\else
\if\SIOPT1
\enlargethispage{\baselineskip}
\fi
\fi
\begin{align}
\label{eq:epsilon-suboptimality-final-estimate-VP}
\Lsos
&\geq \int_{\Omega} h \, \dx + \sum_{i=1}^s \int_{\partial\Omega_i} q_i \abs{\nabla g_i} \,\dbound
\\\nonumber
&\geq \int_{\Omega} h^\varepsilon\, \dx 
- (\alpha_0+\beta)\vert\Omega\vert \varepsilon
+ \sum_{i=1}^s \int_{\partial\Omega_i} 
	\ell_i^\varepsilon 
	-\beta\varepsilon \abs{\nabla g_i}
	- \alpha_i \varepsilon\abs{\nabla g_i}^{-1}
\,\dbound
\\\nonumber
&\geq \Lpdr - \bigg[ 1+ (2C_0+1)\vert\Omega\vert \beta + \beta \sum_{i=1}^s \int_{\partial\Omega_i} 
\abs{\nabla g_i}
+2C_i \abs{\nabla g_i}^{-1}
\,\dbound
\bigg] \varepsilon.
\end{align}
The second inequality follows from elementary estimates based on~\cref{eq:poly-approx-1,eq:poly-approx-2,eq:poly-approx-3}, while the last one follows from~\cref{eq:epsilon-suboptimality-nonpoly-VP} and our choices for $\alpha_0,\ldots,\alpha_s$. Choosing a sufficiently small positive value for $\beta$ 
yields $\Lsos \geq \Lpdr - 2\varepsilon$, as desired.


\section{Proof of \texorpdfstring{\cref{th:sos-convergence-noncompact}}{Theorem \ref{th:sos-convergence-noncompact}}}
\label{ss:proof-sos-convergence-noncompact}
It suffices to prove that, for each $k\geq 1$, the functions $\varphi^k,h^k,\ell_1^k,\ldots,\ell_s^k$ can be used to construct polynomials $\varphi$, $h$, $q_1$, $\ldots$, $q_s$ (not indexed by $k$ 
to lighten the notation) 
that are feasible for~\cref{eq:vp-SOS} for some integer $\nu$ and satisfy 
%
\begin{equation}\label{eq:near-optimality-approx}
\int_\Omega h \,\dx
+\sum_{i=1}^s \int_{\partial\Omega_i} q_i \abs{\nabla g_i} \,\dbound
\geq 
\int_\Omega h^k \,\dx
+\sum_{i=1}^s \int_{\partial\Omega_i} \ell_i^k \,\dbound
- \frac1k.
\end{equation}

Before defining these polynomials we make three observations. First, since $\Lambda_i = \partial\Omega_i \times \{0\}$, the constraint $\varphi \cdot \nabla g_i - q_i\abs{\nabla g_i}^2 \in \mathcal{Q}_\nu(\Lambda_i)$ is equivalent to $\varphi(x,0) \cdot \nabla g_i(x) - q_i(x)\abs{\nabla g_i(x)}^2 \in \mathcal{Q}_\nu(\partial\Omega_i)$. Second, the positive definiteness of the matrix $H_k(x)$ implies that there exist $\delta_k > 0$ such that $H_k(x) \succeq \delta_k I$ on $\Omega$, where $I$ is the identity matrix. Third, since $\varphi^k$ depends polynomially on $y$ by assumption, we can write $\varphi^k(x,y) = \sum_{\alpha \in \mathbb{N}^m} c_\alpha(x) y^\alpha$, where the sum has only finitely many terms and each coefficient $c_\alpha:\Omega \to \R^n$ is differentiable on $\Omega$, including the boundary.

We now select polynomials $h$, $p_{\alpha,1},\ldots,p_{\alpha,n}$ for each $\alpha$ and $q_1,\ldots,q_s$ that satisfy
\begin{subequations}
	\begin{align}
	\label{eq:pi-estimate}
	\| p_{\alpha,i} - c_{\alpha,i} \|_{C^1(\Omega)} &\leq \theta k^{-1},\\
	\label{eq:hk-estimate}
	\|h - h^k\|_{C(\Omega)} &\leq \theta k^{-1},\\
	\label{eq:lk-estimate}
	\left\| q_i - \ell_i^k \abs{\nabla g_i}^{-1} + 2\theta C_i k^{-1} \abs{\nabla g_i}^{-2}\right\|_{C(\partial\Omega_i)} &\leq \theta k^{-1},
	\end{align}
\end{subequations}
where $\theta$ is a sufficiently small positive scalar to be chosen, and
\begin{equation}
C_i = \max_{\partial\Omega_i}\abs{\nabla g_i}^2 + \sum_{j=1}^n\max_{\partial\Omega_i}\abs{\frac{\partial g_i}{\partial x_j}}.
\end{equation}
The polynomial $\varphi$ approximating $\varphi^k$ is then taken to be $\varphi(x,y) = \sum_{\alpha \in \mathbb{N}^m} p_\alpha(x) y^\alpha$.

Choose $\theta$ such that
\begin{equation}\label{eq:theta-condition-1}
\theta \leq \min_{i=1,\ldots,s}\bigg( \abs{\Omega} + \int_{\partial\Omega_i} \abs{\nabla g_i} + 2C_i\abs{\nabla g_i}^{-1} \,\dbound \bigg)^{-1}.
\end{equation}
Using~\cref{eq:hk-estimate}, \cref{eq:lk-estimate}, Putinar's Positivstellensatz~\cite{Putinar}, and steps analogous to those followed in the proof of \cref{th:sos-variational-problem} we can establish~\cref{eq:near-optimality-approx} and also prove that $\varphi(x,0) \cdot \nabla g_i(x) - q_i(x)\abs{\nabla g_i(x)}^2 \in \mathcal{Q}_\nu(\partial\Omega_i)$ for a sufficiently large~$\nu$. The details are omitted for brevity. The main difficulty of our proof is to show that $f + \mathcal{D}\varphi -\eta^k a  - h \in \mathcal{Q}_\nu(\Gamma)$ since $\Gamma = \Omega \times \R^m \times \R^{m\times n}$ is not compact and we cannot directly apply Putinar's Positivstellensatz.

To overcome this obstacle, recall that since $f$ is a polynomial we can write
\begin{equation}
f(x,y,z)=\sum_{\alpha \in \mathbb{N}^m}\sum_{\beta \in \mathbb{N}^m} b_{\alpha\beta}(x) y^\alpha z^\beta
\end{equation}
for some polynomials $b_{\alpha,\beta}$, where the sum is finite. Moreover, we have
\begin{equation}
\mathcal{D}\varphi^k = \sum_{\alpha \in \mathbb{N}^n} \sum_{i=1}^n y^\alpha  \frac{\partial c_{\alpha,i}}{\partial x_i}
+ \sum_{\alpha \in \mathbb{N}^n} \sum_{i=1}^n\sum_{j=1}^m \alpha_j \, c_{\alpha,i} \, y^{\alpha - \hat{e}_j} z_{ji},
\end{equation}
so $f + \mathcal{D}\varphi^k -\eta^k a- h^k$ is a polynomial in the variables $y$ and $z$ with coefficients that are linear combinations of the $x$-dependent functions $h^k$, $b_{\alpha\beta}$, $c_{\alpha,i}$ and $\frac{\partial c_{\alpha,i}}{\partial x_i}$. These coefficients can be arranged into a symmetric matrix $F_k(x)$, not necessarily positive semidefinite, that satisfies
\begin{equation}\label{eq:Fk-definition}
f(x,y,z) + \mathcal{D}\varphi^k(x,y,z) -\eta^k a(x,y,z) - h^k(x) = m(y,z)^\mathsf{T} F_k(x) m(y,z).
\end{equation}
Similarly, we can find a symmetric polynomial matrix $F(x)$ whose entries are linear combinations of the polynomials $h$, $b_{\alpha\beta}$, $p_{\alpha,i}$ and $\frac{\partial p_{\alpha,i}}{\partial x_i}$ such that
\begin{equation}
f(x,y,z) + \mathcal{D}\varphi(x,y,z) -\eta^k a(x,y,z)  - h(x) = m(y,z)^\mathsf{T} F(x) m(y,z).
\end{equation}
Since the left-hand side of~\cref{eq:Fk-definition} equals $m(y,z)^\mathsf{T} H_k(x) m(y,z)$ by assumption, there exists a symmetric matrix $G_k(x)$ that satisfies $m(y,z)^\mathsf{T} G_k(x) m(y,z) = 0$ and $H_k(x) = F_k(x) + G_k(x)$.
This matrix $G_k(x)$ can be approximated pointwise on $\Omega$ by a symmetric polynomial matrix $G(x)$ that satisfies  $m(y,z)^\mathsf{T} G(x) m(y,z) = 0$ and
$-\frac13 \delta_k I \preceq G(x)-G_k(x) \preceq \frac13\delta_k I$ for all $x$ in $\Omega$. Fix $\theta$ in~\cref{eq:pi-estimate,eq:hk-estimate} small enough that~\cref{eq:theta-condition-1} holds and also such that
$-\frac13 \delta_k I \preceq F(x)-F_k(x) \preceq \frac13\delta_k I$  on $\Omega$.
Then, the symmetric polynomial matrix $F(x)+G(x)$ is positive definite on $\Omega$, and, since $H_k(x)=F_k(x)+G_k(x) \succeq \delta_k I$ on $\Omega$,
\begin{equation}
F(x)+G(x) = H_k(x) + [F(x)-F_k(x)] + [G(x) - G_k(x)] \succeq \tfrac13 \delta_k I.
\end{equation}
We can therefore apply a matrix version of Putinar's Positivstellensatz~\cite{Scherer2006} to construct polynomial matrices $S_0(x),\ldots,S_s(x)$ such that
\begin{equation}
F(x)+G(x) = S_0(x)^\mathsf{T} S_0(x) + \sum_{i=1}^s g_i(x) S_i(x)^\mathsf{T} S_i(x),
\end{equation}
and therefore
\small
\begin{gather}
f + \mathcal{D}\varphi -\eta^k a - h 
= m^\mathsf{T} F m
= m^\mathsf{T} (F+G) m
= \underbrace{m^\mathsf{T} S_0 S_0 m}_{=:\sigma_0} + \sum_{i=1}^s g_i \underbrace{m^\mathsf{T} S_i^\mathsf{T} S_i m}_{=:\sigma_i}.
\end{gather}
\normalsize
The polynomials $\sigma_0,\ldots,\sigma_s$ are SOS by construction, showing that $f + \mathcal{D}\varphi -\eta^k a- h \in \mathcal{Q}_\nu(\Gamma)$ for $\nu$ sufficiently large. \Cref{th:sos-convergence-noncompact} is therefore proven.

\section{Proof of \texorpdfstring{\cref{th:sos-integral-inequality}}{Theorem \ref{th:sos-integral-inequality}}}
\label{ss:proof-sos-integral-inequality}
Similar to the proof of~\cref{th:sos-variational-problem}, it suffices to show that for any $\varepsilon>0$ 
there exists $\nu$ such that $\Bsos \geq \Bpdr - 2\varepsilon$. Without loss of generality, we will assume $\varepsilon$ is small enough that $b(\lambda_0) < \Bpdr - 2\varepsilon$.

We begin by constructing a tuple $(\lambda^\delta,\eta^\delta,\vphi^\delta,h^\delta,\ell_{1}^\delta,\ldots,\ell_{s}^\delta)$ that is feasible for problem \cref{eq:Bpdr}, achieves $b(\lambda^\delta)  \geq \Bpdr - 2\varepsilon$, and satisfies
\begin{equation}\label{eq:strict-positivity-estimate}
\int_{\Omega} h^\delta(x) \, \dx + \sum_{i=1}^s \int_{\partial\Omega_i} \ell_i^\delta(x) \,\dbound \geq \delta\gamma
\end{equation}
for some positive constant $\delta$ to be chosen below. To achieve these goals, fix any $(\lambda^*,\eta^*,\vphi^*$, $h^*,\ell_{1}^*,\ldots,\ell_{s}^*)$ feasible for~\cref{eq:Bpdr} that satisfies $b(\lambda^*) \geq \Bpdr - \varepsilon$, and set $\delta = \varepsilon\vert b(\lambda_0) - \Bpdr+\varepsilon\vert^{-1}$. Note that $\delta<1$ because of our assumptions on $b(\lambda_0)$ and~$\varepsilon$. The convex combination 
\begin{multline}
(\lambda^\delta,\eta^\delta,\vphi^\delta,h^\delta,\ell_{1}^\delta,\ldots,\ell_{s}^\delta) = \delta (\lambda_0,\eta_0,\vphi_0,h_0,\ell_{1,0},\ldots,\ell_{s,0})\\
+ (1-\delta)(\lambda^*,\eta^*,\vphi^*,h^*,\ell_{1}^*,\ldots,\ell_{s}^*)
\end{multline}
is therefore feasible for the convex problem~\cref{eq:Bpdr}. The linearity of the integral and inequality~\cref{eq:strict-feasibility-condition} immediately give~\cref{eq:strict-positivity-estimate}. Finally, since $b$ is concave and $b(\lambda^*) \geq \Bpdr - \varepsilon$,
\begin{align}
b(\lambda^\delta) 
&\geq \delta b(\lambda_0) + (1-\delta)b(\lambda^*)\geq \delta b(\lambda_0) + (1-\delta)\Bpdr - (1-\delta)\varepsilon = \Bpdr -2\varepsilon.
\end{align}

To prove \cref{th:sos-integral-inequality} it suffices to use the functions $\vphi^\delta,h^\delta,\ell_{1}^\delta,\ldots,\ell_{s}^\delta$ to construct polynomials  $\vphi$, $h$, $q_1$, $\ldots$, $q_s$ such that $(\lambda^\delta,\eta^\delta,\vphi,h,q_1,\ldots,q_s)$ is feasible for~\cref{eq:int-ineq-SOS} when $\nu$ is sufficiently large. This is done as in the proof of~\cref{th:sos-variational-problem}, except here we select the constant $\beta$ small enough that
\begin{align}
\delta\gamma - \bigg[ (2C_0+1)\vert\Omega\vert  + \sum_{i=1}^s \int_{\partial\Omega_i} 
\abs{\nabla g_i}
+2C_i \abs{\nabla g_i}^{-1}
\,\dbound
\bigg] \beta \varepsilon \geq 0.
\end{align}
With this assumption, estimates similar to those in~\cref{eq:epsilon-suboptimality-final-estimate-VP} can be combined with~\cref{eq:strict-positivity-estimate} to verify the last constraint in~\cref{eq:int-ineq-SOS}. The weighted SOS conditions will hold when $\nu$ is sufficiently large for the same reasons described in the proof of~\cref{th:sos-variational-problem}.

\bibliographystyle{siamplain}
\bibliography{pdr-no-doi.bib}

\begin{thebibliography}{10}

\bibitem{Ahmadi2019}
{\sc M.~Ahmadi, G.~Valmorbida, D.~Gayme, and A.~Papachristodoulou}, {\em {A
  framework for input-output analysis of wall-bounded shear flows}}, J. Fluid
  Mech., 873 (2019), pp.~742--785.

\bibitem{Ahmadi2015}
{\sc M.~Ahmadi, G.~Valmorbida, and A.~Papachristodoulou}, {\em {A convex
  approach to hydrodynamic analysis}}, in {Proc. 54\textsuperscript{th} IEEE
  Conf. Decis. Control}, IEEE, 2015, pp.~7262--7267.

\bibitem{Ahmadi2016}
{\sc M.~Ahmadi, G.~Valmorbida, and A.~Papachristodoulou}, {\em {Dissipation
  inequalities for the analysis of a class of PDEs}}, Automatica, 66 (2016),
  pp.~163--171.

\bibitem{Ahmadi2017}
{\sc M.~Ahmadi, G.~Valmorbida, and A.~Papachristodoulou}, {\em {Safety
  verification for distributed parameter systems using barrier functionals}},
  Syst. Control Lett., 108 (2017), pp.~33--39.

\bibitem{anane1987simplicite}
{\sc A.~Anane and J.-L. Lions}, {\em {Simplicit{\'e} et isolation de la
  premiere valeur propre du p-Laplacien avec poids}}, Comptes rendus de
  l'Acad{\'e}mie des sciences. S{\'e}rie 1, Math{\'e}matique, 305 (1987),
  pp.~725--728.

\bibitem{Nirenberg}
{\sc H.~Berestycki, L.~Nirenberg, and S.~S. Varadhan}, {\em The principal
  eigenvalue and maximum principle for second-order elliptic operators in
  general domains}, Comm. Pure Appl. Math., 47 (1994), pp.~47--92.

\bibitem{Buttazzo1998}
{\sc G.~Buttazzo, M.~Giaquinta, and S.~Hildebrandt}, {\em {One-dimensional
  variational problems}}, Oxford Lecture Series in Mathematics and its
  Applications, Oxford University Press, 1998.

\bibitem{Chernyshenko2014a}
{\sc S.~I. Chernyshenko, P.~J. Goulart, D.~Huang, and A.~Papachristodoulou},
  {\em {Polynomial sum of squares in fluid dynamics: a review with a look
  ahead}}, Philos. Trans. Roy. Soc. A, 372 (2014), p.~20130350.

\bibitem{Dacorogna2008}
{\sc B.~Dacorogna}, {\em {Direct methods in the calculus of variations}},
  vol.~78, Springer New York, 2008.

\bibitem{diBlasio2020}
{\sc G.~di~Blasio and P.~D. Lamberti}, {\em {Eigenvalues of the Finsler
  p-Laplacian on varying domains}}, Mathematika, 66 (2020), pp.~765--776.

\bibitem{Doktor1976}
{\sc P.~Doktor}, {\em {Approximation of domains with Lipschitzian boundary}},
  {\v{C}}asopis pro p{\v{e}}stov{\'{a}}n{\'{i}} Mat., 101 (1976), pp.~237--255.

\bibitem{Edelen1962}
{\sc D.~G.~B. Edelen}, {\em {The null set of the Euler--Lagrange operator}},
  Arch. Ration. Mech. Anal., 11 (1962), pp.~117--121.

\bibitem{Egozcue2003}
{\sc J.~J. Egozcue, R.~Meziat, and P.~Pedregal}, {\em {From a nonlinear,
  nonconvex variational problem to a linear, convex formulation}}, Appl. Math.
  Optim., 47 (2003), pp.~27--44.

\bibitem{Ekeland1999}
{\sc I.~Ekeland and R.~T\'emam}, {\em {Convex analysis and variational
  problems}}, SIAM, 1999.

\bibitem{Evans1998}
{\sc L.~C. Evans}, {\em {Partial differential equations}}, American
  Mathematical Society, 2~ed., 1998.

\bibitem{Fantuzzi2022}
{\sc G.~Fantuzzi and I.~Tobasco}, {\em {Sharpness and non-sharpness of
  occupation measure bounds for integral variational problems}}.
\newblock \href{http://arxiv.org/abs/2207.13570}{arXiv:2207.13570} [math.OC],
  2022.

\bibitem{Firoozye1991}
{\sc N.~B. Firoozye}, {\em {Optimal use of the translation method and
  relaxations of variational probiems}}, Commun. Pure Appl. Math., 44 (1991),
  pp.~643--678.

\bibitem{fleckinger1986}
{\sc J.~Fleckinger and M.~L. Lapidus}, {\em Eigenvalues of elliptic boundary
  value problems with an indefinite weight function}, Trans. Amer. Math. Soc.,
  295 (1986), pp.~305--324.

\bibitem{Fuentes2022}
{\sc F.~Fuentes, D.~Goluskin, and S.~Chernyshenko}, {\em {Global stability of
  fluid flows despite transient growth of energy}}, Phys. Rev. Lett.,  (2022),
  p.~204502.

\bibitem{Goluskin2019a}
{\sc D.~Goluskin and G.~Fantuzzi}, {\em {Bounds on mean energy in the
  Kuramoto--Sivashinsky equation computed using semidefinite programming}},
  Nonlinearity, 32 (2019), pp.~1705--1730.

\bibitem{Henrot1994}
{\sc A.~Henrot}, {\em {Continuity with respect to the domain for the Laplacian:
  a survey}}, Control Cybern., 23 (1994), pp.~427--443.

\bibitem{Hersch1961}
{\sc J.~Hersch}, {\em {Physical interpretation and strengthening of M.
  Protter's method for vibrating nonhomogenous membranes; its analogue for
  Schr\"odinger's equation}}, Pacific J. Math., 11 (1961), pp.~971--980.

\bibitem{Korda2018}
{\sc M.~Korda, D.~Henrion, and J.~B. Lasserre}, {\em Moments and convex
  optimization for analysis and control of nonlinear pdes}, in Numerical
  Control: Part A, E.~Trélat and E.~Zuazua, eds., vol.~23 of Handbook of
  Numerical Analysis, Elsevier, 2022, ch.~10, pp.~339--366.

\bibitem{Korda2022}
{\sc M.~Korda and R.~Rios-Zertuche}, {\em {The gap between a variational
  problem and its occupation measure relaxation}}.
\newblock \href{http://arxiv.org/abs/2205.14132}{arXiv:2205.14132} [math.OC],
  2022.

\bibitem{Lasserre2001}
{\sc J.~B. Lasserre}, {\em {Global optimization with polynomials and the
  problem of moments}}, SIAM J. Optim., 11 (2001), pp.~796--817.

\bibitem{Lasserre2015}
{\sc J.~B. Lasserre}, {\em {An introduction to polynomial and semi-algebraic
  optimization}}, vol.~52 of Cambridge Texts in Applied Mathematics, Cambridge
  University Press, 2015.

\bibitem{Laurent2009}
{\sc M.~Laurent}, {\em {Sums of squares, moment matrices and optimization over
  polynomials}}, in Emerging Applications of Algebraic Geometry, M.~Putinar and
  S.~Sullivant, eds., The IMA Volumes in Mathematics and its Applications,
  Springer, New York, NY, 2009, pp.~157--270.

\bibitem{Lindquist}
{\sc P.~Lindqvist}, {\em On the equation ${\rm div}(|\nabla u|^{p-2}\nabla u) +
  \mu |u|^{p-2}u=0$}, in Proc. Amer. Math. Soc, vol.~109, 1990, pp.~157--164.

\bibitem{Lindqvist2008}
{\sc P.~Lindqvist}, {\em A nonlinear eigenvalue problem}, Topics in
  Mathematical Analysis, 3 (2008), pp.~175--203.

\bibitem{Lofberg2004}
{\sc J.~L{\"{o}}fberg}, {\em {YALMIP: A toolbox for modeling and optimization
  in MATLAB}}, in IEEE International Symposium on Computer Aided Control
  Systems Design, IEEE, 2004, pp.~284--289.

\bibitem{Lofberg2009}
{\sc J.~L{\"{o}}fberg}, {\em {Pre-and post-processing sum-of-squares programs
  in practice}}, IEEE Trans. Automat. Control, 54 (2009), pp.~1007--1011.

\bibitem{Magron2018}
{\sc V.~Magron and C.~Prieur}, {\em {Optimal control of linear PDEs using
  occupation measures and SDP relaxations}}, IMA J. Math. Control Inform.,
  dyn044 (2018), pp.~1--16.

\bibitem{Marx2020}
{\sc S.~Marx, T.~Weisser, D.~Henrion, and J.~B. Lasserre}, {\em {A moment
  approach for entropy solutions to nonlinear hyperbolic PDEs}}, Math. Control
  Rel. Fields, 10 (2020), pp.~113--140.

\bibitem{Megretski2003}
{\sc A.~Megretski}, {\em Positivity of trigonometric polynomials}, in Proc.
  42\textsuperscript{nd} IEEE Conf. Decis. Control, vol.~4, IEEE, 2003,
  pp.~3814--3817.

\bibitem{MorseFeshbach}
{\sc P.~M. Morse}, {\em Methods of theoretical physics, part II}, McGraw-Hill,
  1953.

\bibitem{Pedregal2000}
{\sc J.~Munoz and P.~Pedregal}, {\em Explicit solutions of nonconvex
  variational problems in dimension one}, Appl. Math. Optim., 41 (2000),
  pp.~129--140.

\bibitem{Murty1987}
{\sc K.~G. Murty and S.~N. Kabadi}, {\em {Some NP-complete problems in
  quadratic and nonlinear programming}}, Math. Program., 39 (1987),
  pp.~117--129.

\bibitem{Nesterov2000}
{\sc Y.~Nesterov}, {\em {Squared functional systems and optimization
  problems}}, in High Perform. Optim., H.~Frenk, K.~Roos, T.~Terlaky, and
  S.~Zhang, eds., Springer, 2000, pp.~405--440.

\bibitem{Nie2013}
{\sc J.~Nie}, {\em {An exact Jacobian SDP relaxation for polynomial
  optimization}}, Math. Program., 137 (2013), pp.~225--255.

\bibitem{Parrilo2000}
{\sc P.~A. Parrilo}, {\em {Structured semidefinite programs and semialgebraic
  geometry methods in robustness and optimization}}, PhD thesis, California
  Institute of Technology, 2000.

\bibitem{Parrilo2003}
{\sc P.~A. Parrilo}, {\em Semidefinite programming relaxations for
  semialgebraic problems}, Math. Program., 96 (2003), pp.~293--320.

\bibitem{PedregalBook}
{\sc P.~Pedregal}, {\em Parametrized measures and variational principles},
  Springer, 1997.

\bibitem{Peet2007}
{\sc M.~M. Peet and P.-A. Bliman}, {\em {An extension of the Weierstrass
  approximation theorem to linear varieties: Application to delay systems}},
  IFAC Proc. Volumes, 40 (2007), pp.~152--155.

\bibitem{Putinar}
{\sc M.~Putinar}, {\em Positive polynomials on compact semi-algebraic sets},
  Indiana Univ. Math. J., 42 (1993), pp.~969--984.

\bibitem{Roh2007}
{\sc T.~Roh, B.~Dumitrescu, and L.~Vandenberghe}, {\em {Multidimensional FIR
  filter design via trigonometric sum-of-squares optimization}}, IEEE J. Sel.
  Top. Sig. Proc., 1 (2007), pp.~641--650.

\bibitem{Savare2002}
{\sc G.~Savar{\'{e}} and G.~Schimperna}, {\em {Domain perturbations and
  estimates for the solutions of second order elliptic equations}}, J. des
  Math. Pures Appl., 81 (2002), pp.~1071--1112.

\bibitem{Scherer2006}
{\sc C.~W. Scherer and C.~W.~J. Hol}, {\em {Matrix sum-of-squares relaxations
  for robust semi-definite programs}}, Math. Program. B, 107 (2006),
  pp.~189--211.

\bibitem{Ahmadi2014b}
{\sc G.~Valmorbida, M.~Ahmadi, and A.~Papachristodoulou}, {\em Semi-definite
  programming and functional inequalities for distributed parameter systems},
  in {Proc. 53$\textsuperscript{rd}$ IEEE Conf. Decis. Control}, IEEE, 2014,
  pp.~4304--4309.

\bibitem{Valmorbida2015a}
{\sc G.~Valmorbida, M.~Ahmadi, and A.~Papachristodoulou}, {\em {Convex
  solutions to integral inequalities in two-dimensional domains}}, in Proc.
  54$\textsuperscript{th}$ IEEE Conf. Decis. Control, IEEE, 2015,
  pp.~7268--7273.

\bibitem{Valmorbida2016}
{\sc G.~Valmorbida, M.~Ahmadi, and A.~Papachristodoulou}, {\em {Stability
  analysis for a class of partial differential equations via semidefinite
  programming}}, IEEE Trans. Automat. Contr., 61 (2016), pp.~1649--1654.

\bibitem{SDPA}
{\sc M.~Yamashita, K.~Fujisawa, M.~Fukuda, K.~Kobayashi, K.~Nakata, and
  M.~Nakata}, {\em {Latest developments in the SDPA family for solving
  large-scale SDPs}}, in Handbook on semidefinite, conic and polynomial
  optimization, Springer, 2012, pp.~687--713.

\end{thebibliography}

\end{document}